\documentclass{article}
\usepackage{amsmath,enumerate,amsfonts,amssymb,color,graphicx,amsthm}
\renewcommand{\raggedright}{\leftskip=0pt \rightskip=0pt plus 0cm}

\usepackage{multirow,caption,subcaption}
\usepackage{slashed}
\usepackage{mathrsfs}
\usepackage{textcomp}
\usepackage{extarrows}
\usepackage{authblk}
\usepackage{titlesec}
\allowdisplaybreaks[4]
\usepackage{fancyhdr}
\usepackage{verbatim}
\usepackage{hyperref}
\usepackage{framed}
\usepackage[normalem]{ulem}
\usepackage{tikz}
\usetikzlibrary{patterns}
\usepackage[margin=1.25in]{geometry}
\usepackage{cite}
\usepackage{hyperref}
\hypersetup{
    colorlinks,
    citecolor=blue,
    filecolor=blue,
    linkcolor=blue,
    urlcolor=black
}
\numberwithin{equation}{section}
\newtheorem{thm}{Theorem}[section]
\newtheorem{lem}{Lemma}[section]
\newtheorem{prop}{Proposition}[section]
\newtheorem{defn}{Definition}[section]
\newtheorem{cor}{Corollary}[section]
\newtheorem{rem}{Remark}[section]

\newcommand{\R}{\mathbb{R}}

\newcommand{\Rn}{\mathbb{R}^n}

\newcommand{\be}{\begin{equation}}
\newcommand{\ee}{\end{equation}}

\numberwithin{equation}{section}

\begin{document}
\title{\bf Blow-up analysis and compactness for critical Dirac equations on $\mathbb S^n$
}

\author{2}
\author{\medskip  {\sc Ting Li},\ \ {\sc Zhongwei Tang}\footnote{Z. Tang is supported by National Science Foundation of China (12471099, 12271223).}  }
\date{}
\maketitle\begin{abstract}
We study blow-up and compactness for the critical nonlinear
Dirac equation
\[
D_{g_0}\psi_i
=
H_i|\psi_i|^{\frac{2}{n-1}}\psi_i
\qquad \text{on }\mathbb S^n,\quad n\ge 3,
\]
where $g_0$ is the round metric and
$H_i\to H>0$ in $C^0(\mathbb S^n)$.
Under a first-threshold bound on the critical mass,
we prove that every blowing-up sequence has, after passing
to a subsequence, a unique blow-up point $p$ satisfying
$H(p)=\max_{\mathbb S^n}H$.
The rescaled solutions converge to a ground-state Dirac
bubble that carries the entire limiting critical mass.
Under local uniform $C^1$ bounds on $H_i$, we use the Kelvin transform and a second rescaling to prove
that $p$ is an isolated simple blow-up point.
We also derive sharp pointwise and gradient estimates
in cylindrical coordinates and identify the normalized
Green limit.

If the coefficients converge in $C^2$ near $p$,
the Pohozaev identity yields
$\operatorname{Hess}_{g_0}H(p)=0$.
Consequently, if a positive $H\in C^2(\mathbb S^n)$
satisfies $\Delta_{g_0}H(p)<0$ at every global maximum,
then solutions with critical mass bounded by a level
slightly above the first threshold are uniformly bounded
in $L^\infty(\mathbb S^n)$.
This bound remains uniform under small $C^2$
perturbations of $H$.
\end{abstract}
{\noindent \small \bf Key words:} Critical nonlinear Dirac equation; spinorial Yamabe problem; blow-up analysis; Pohozaev identity; compactness.

{\noindent \small \bf Mathematics Subject Classification (2020)}\quad 53C27 ·  35Q41 ·  35B44· 35B45

\smallskip
\section{Introduction}
Let $(\mathbb S^n,g_0,\sigma)$ be the round sphere equipped with its
standard spin structure, where $n\geq2$. In this paper, we study
the critical nonlinear Dirac equation
\begin{equation}\label{eq:intro-dirac}
D_{g_0}\psi=H(x)|\psi|^{2^*-2}\psi
\qquad\text{on }\mathbb S^n,
\end{equation}
where $D_{g_0}$ is the Dirac operator, $H:\mathbb S^n\to\mathbb R$
is a prescribed function, and $|\cdot|$ denotes the pointwise
norm induced by the Hermitian metric on the spinor bundle
$\Sigma \mathbb S^n$. Here
\(
2^*=\frac{2n}{n-1}
\)
is the critical exponent for the Sobolev embedding
\(
H^{1/2}(\mathbb S^n,\Sigma \mathbb S^n)
\hookrightarrow L^p(\mathbb S^n,\Sigma\mathbb S^n)\),
\(1\leq p\leq2^*.
\)

When $H\equiv1$, equation~\eqref{eq:intro-dirac} is known
as the spinorial Yamabe equation. It is closely related
to the B\"ar--Hijazi--Lott invariant, defined by minimizing
the first positive Dirac eigenvalue within a conformal
class under a volume normalization \cite{Ammann2009}.
For a prescribed function $H$, the equation also has a
geometric interpretation in dimension two: the spinorial
Weierstrass representation relates its solutions to
conformal immersions, possibly with branch points,
of $\mathbb S^2$ into $\mathbb R^3$ with prescribed mean curvature
$H$ \cite{A2003,Ammann2009,Xu2024,Friedrich1998,Kenmotsu1979,KusnerSchmitt1996,Matsutani1999,Taimanov1998,Taimanov1999}.

The corresponding critical Dirac equation on $\mathbb R^n$
has also been studied extensively. Ammann, Humbert and Ould Ahmedou \cite{BE} established
a Pohozaev-type identity for nonlinear Dirac equations.
At the critical exponent, it yields a Kazdan--Warner-type
obstruction on closed spin manifolds.
Borrelli--Frank \cite{BF2020} established sharp decay
estimates and asymptotic expansions for finite-energy
solutions. Borrelli--Malchiodi--Wu \cite{BMW2021}
classified ground-state solutions as conformal images
of Killing spinors on the round sphere and obtained
their explicit Euclidean form. These results provide
basic tools for identifying and analyzing the limiting
profiles arising in blow-up analysis.

The analysis of solution sequences involves two main
difficulties: the lack of compactness of the critical
Sobolev embedding and the strong indefiniteness of the
associated variational functional.
Isobe \cite{Isobe2011} established regularity results
for weak solutions and a global compactness theorem
for the associated Palais--Smale sequences on compact
spin manifolds, describing the loss of compactness
through Euclidean bubbles.
For subcritical approximations on the sphere,
Xu \cite{Xu2024} obtained a concentration-compactness
alternative and a single bubble decomposition in a
suitable low energy range. For sequences of solutions
in this setting, the concentration point is a critical
point of the prescribed coefficient. More recently, Chen, Chen and Zhu \cite{CCZ2026} established
energy identities for nonlinear Dirac systems on fixed spin
manifolds and on non collapsed degenerating Einstein manifolds.
Their fixed manifold result includes the critical Dirac equation
with a fixed $C^1$ coefficient.
Their proof uses cylindrical analysis and a three circles
estimate to control the energy in the neck regions.

Wang and Zhang \cite{WangZhang2025} established a quantitative
stability inequality for the sharp spinorial Sobolev inequality
on $\mathbb S^n$.
Their estimate controls the distance to the family of conformal
images of Killing spinors.
Compactness cannot be expected without additional
assumptions. Isobe--Sire--Xu \cite{ISX2024} constructed
positive nonconstant $C^2$ coefficients on the round
sphere for which the equation admits sequences of
solutions with uniformly bounded critical integrals
and unbounded $L^\infty$ norms.
These examples motivate a detailed analysis of the
local structure of blow-up. In particular, one seeks
to determine when blow-up points are isolated simple
and to obtain precise estimates near such points.

We study the compactness and blow-up behavior of sequences
of solutions to \eqref{eq:intro-dirac}. More precisely, let
$\{\psi_i\}\subset H^{1/2}(\mathbb S^n,\Sigma\mathbb S^n)$ be a sequence
of weak solutions of
\begin{equation}\label{eq:intro-sequence}
D_{g_0}\psi_i=H_i(x)|\psi_i|^{2^*-2}\psi_i
\qquad\text{on }\mathbb S^n,
\end{equation}
where
\begin{equation}\label{eq:intro-assumptions}
H_i\to H\quad\text{in }C^0(\mathbb S^n),
\qquad
\int_{\mathbb S^n}|\psi_i|^{2^*}\,dv_{g_0}\leq\Lambda
\end{equation}
for some constant $\Lambda>0$.
We are mainly concerned with the behavior of $\psi_i$
when $\|\psi_i\|_{L^\infty(\mathbb S^n)}\to\infty$, and in particular
with the concentration of its critical mass and its
asymptotic profile near blow-up points.

By the conformal covariance of the Dirac operator, analysis of \eqref{eq:intro-sequence} can be reduced to a Euclidean equation.
For $p\in \mathbb S^n$, let
$\pi_p:\mathbb S^n\setminus\{-p\}\to\mathbb R^n$
be the standard stereographic projection with $\pi_p(p)=0$. We use $x$ for points on $\mathbb S^n$ and
$y=\pi_p(x)$ for their stereographic coordinates. The corresponding spinors on $\mathbb R^n$ are defined by
\[
\varphi_i(y)
:=
f(y)^{\frac{n-1}{2}}
F\bigl(\psi_i(\pi_p^{-1}(y))\bigr),
\]
where $f(y)=\frac{2}{1+|y|^2}$, and $F$ is the fiberwise isometry described in Section~\ref{Conformal covariance and stereographic projection}.
By conformal covariance,
\begin{equation}\label{eq:intro-euclidean}
D_{g_{{\mathbb R^n}}}\varphi_i
=
\widetilde H_i(y)|\varphi_i|^{2^*-2}\varphi_i
\quad\text{in }\mathbb R^n,
\qquad
\widetilde H_i:=H_i\circ\pi_p^{-1}.
\end{equation}
For sequences $\{\psi_i\}$ satisfying
\eqref{eq:intro-sequence} and \eqref{eq:intro-assumptions},
we introduce the following definitions,
adapting the terminology of \cite{Li1995JDE}.
\begin{defn}[Blow-up point]
\label{def:intro-blowup}
A point $p\in \mathbb S^n$ is called a blow-up point of
$\{\psi_i\}$ if there exist $x_i\to p$ such that
$|\psi_i(x_i)|\to\infty$.
Equivalently, in the above chart, there exist $y_i\to0$
such that $|\varphi_i(y_i)|\to\infty$.
\end{defn}

\begin{defn}[Isolated blow-up point]
\label{def:intro-isolated}
A blow-up point $p$ is called isolated if there exist
constants $C,\bar r>0$, independent of $i$, and local
maximum points $y_i\to0$ of $|\varphi_i|$ such that
$|\varphi_i(y_i)|\to\infty$ and
\[
|\varphi_i(y)|
\le C|y-y_i|^{-\frac{n-1}{2}},
\qquad
0<|y-y_i|<\bar r,
\]
for all sufficiently large $i$.
\end{defn}

\begin{defn}[Isolated simple blow-up point]
\label{def:intro-simple}
An isolated blow-up point $p$ is called isolated simple
if, for some choice of the points $y_i$ in
Definition~\ref{def:intro-isolated}, the function
\[
\overline w_i(r)
:=
\frac{r^{n-1}}{|\partial B_r|}
\int_{\partial B_r(y_i)}|\varphi_i|^2\,d\sigma
\]
has exactly one critical point in $(0,\rho)$,
for some $\rho\in(0,\bar r)$ independent of $i$
and all sufficiently large $i$.
\end{defn}
\begin{rem}Under the $C^1$ assumption on the coefficients, local elliptic regularity gives
$\varphi_i\in C^1_{\mathrm{loc}}$.
Consequently, $\overline w_i\in C^1((0,\rho))$.
The use of $|\varphi_i|^2$ avoids differentiating
$|\varphi_i|$ at its zeros.
\end{rem}
We first describe concentration at the first threshold for
uniformly convergent coefficients.
This provides the starting point for our analysis of isolated
simplicity and the compactness criterion proved below.
\begin{thm}
\label{thm:intro-threshold}
Let $n\ge3$ and let $\{\psi_i\}$ satisfy
\eqref{eq:intro-sequence} and \eqref{eq:intro-assumptions}.
Assume that $H_i\ge h_0>0$ and
\begin{equation}\label{eq:intro-energy-threshold}
\limsup_{i\to\infty}\int_{\mathbb S^n}|\psi_i|^{2^*}\,dv_{g_0}
\le
H_{\max}^{-n}\left(\frac n2\right)^n\omega_n,
\end{equation}
where $H_{\max}:=\max\limits_{\mathbb S^n}H$ and
$\omega_n:=|\mathbb S^n|$.
If $\|\psi_i\|_{L^\infty(\mathbb S^n)}\to\infty$,
then, after passing to a subsequence, the following hold.

\begin{enumerate}
\item[(i)]
There is a unique blow-up point $p\in\mathbb S^n$, and
\(
H(p)=H_{\max}\),
\(
|\psi_i|^{2^*}\,dv_{g_0}
\rightharpoonup
H_{\max}^{-n}\left(\frac n2\right)^n\omega_n\,\delta_p
\)
as finite Radon measures.

\item[(ii)]
Let $\varphi_i$ be the corresponding spinors on
$\mathbb R^n$ in the chart centered at $p$.
There exist local maximum points $y_i\to0$ of
$|\varphi_i|$, with $|\varphi_i(y_i)|\to\infty$,
such that, setting
\[
\lambda_i:=|\varphi_i(y_i)|^{-\frac{2}{n-1}},
\qquad
\Phi_i(z):=
\lambda_i^{\frac{n-1}{2}}\varphi_i(y_i+\lambda_i z),
\]we have
\(
\Phi_i(z)\longrightarrow\Phi(z)\)
in \(C^0_{\mathrm{loc}}(\mathbb R^n)
\) and \(L^{2^*}(\mathbb R^n),
\)
where
\[
\Phi(z)
=
H(p)^{-\frac{n-1}{2}}\sqrt{\lambda}
\left(\frac{2\lambda}{\lambda^2+|z|^2}\right)^{\frac n2}
\left(1-\frac{z}{\lambda}\right)\cdot_{g_{\Rn}}\widetilde{\Phi}_0,
\qquad
\lambda=\frac{n}{H(p)},
\]
for some constant spinor $\widetilde{\Phi}_0\in\mathbb C^{2^{[ \frac{n}{2}]}}$
satisfying
\(
|\widetilde{\Phi}_0|
=
\frac{1}{\sqrt2}
\left(\frac n2\right)^{\frac{n-1}{2}}.
\)
\end{enumerate}
\end{thm}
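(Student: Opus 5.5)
The plan is a blow-up/rescaling argument combined with the sharp spinorial Sobolev inequality; the first-threshold bound \eqref{eq:intro-energy-threshold} leaves room for only one bubble.

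\textbf{Step 1: a rescaled limit.} Choose $x_i$ with $|\psi_i(x_i)|=\|\psi_i\|_{L^\infty}\to\infty$. After a subsequence, $x_i\to p$. Work in the chart $\pi_p$ and let $y_i=\pi_p(x_i)$. Since $f$ is bounded above and below near $0$, we may take $y_i$ to be a local maximum of $|\varphi_i|$ on a small ball with $|\varphi_i(y_i)|\to\infty$; an approximate maximum also works. Define $\lambda_i$ and $\Phi_i$ as in (ii). Then
\[
D\Phi_i=\widetilde H_i(y_i+\lambda_i z)|\Phi_i|^{2^*-2}\Phi_i,
\qquad |\Phi_i(0)|=1,
\]
and $|\Phi_i|\le C$ on balls $B_{R}$ with $R=o(\lambda_i^{-1})$, because $|\varphi_i|$ is comparable to its maximum there. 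The integral $\int|\Phi_i|^{2^*}$ is bounded by $\Lambda$, up to factors tending to $1$. Regularity for the Dirac equation (the $L^p$ and Schauder estimates of Isobe's regularity theory) gives $C^{0,\alpha}_{\mathrm{loc}}$ bounds. Hence $\Phi_i\to\Phi$ in $C^0_{\mathrm{loc}}$, where
\[
D\Phi=H(p)|\Phi|^{2^*-2}\Phi\ \text{ on }\mathbb R^n,\qquad \Phi\ne0,\qquad \Phi\in L^{2^*}.
\]

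\textbf{Step 2: the energy lower bound.} Testing the limit equation with $\Phi$ gives $\int\langle D\Phi,\Phi\rangle=H(p)\int|\Phi|^{2^*}$. The sharp spinorial Sobolev inequality (the B\"ar--Hijazi--Lott invariant of the round sphere, equal to $\tfrac n2\omega_n^{1/n}$) holds for nontrivial solutions of the limit equation. It yields
\[
\int|\Phi|^{2^*}\ge H(p)^{-n}\left(\tfrac n2\right)^n\omega_n\ge H_{\max}^{-n}\left(\tfrac n2\right)^n\omega_n ,
\]
with equality only for ground states. Lower semicontinuity and conformal invariance of the $L^{2^*}$ norm give
\[
\int|\Phi|^{2^*}\le\liminf\int|\psi_i|^{2^*}\le H_{\max}^{-n}\left(\tfrac n2\right)^n\omega_n .
\]
Therefore all the inequalities are equalities. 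This gives $H(p)=H_{\max}$, and $\Phi$ is a ground state. By the Borrelli--Malchiodi--Wu classification, $\Phi$ is a conformal Killing-spinor bubble. Our normalization $|\Phi(0)|=1$ together with the maximum property, so that $|\Phi|$ attains its maximum at $0$, fixes the center and the scale. This gives the stated formula with $\lambda=n/H(p)$; the modulus of $\widetilde\Phi_0$ is then determined by $|\Phi(0)|=1$ and the equation. Equality of the norms upgrades the local convergence to convergence in $L^{2^*}(\mathbb R^n)$, by Brezis--Lieb.

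\textbf{Step 3: uniqueness and the measure limit.} By Step 2 the whole mass concentrates in $B_{R\lambda_i}(y_i)$ up to $o(1)$. Hence $\int_{\mathbb S^n\setminus B_\delta(p)}|\psi_i|^{2^*}\to0$ for every $\delta>0$, which gives the claimed weak convergence to the point mass at $p$. Suppose there were another blow-up point $q\ne p$. Rescaling at the points $x_i'\to q$, Step 1 would produce a second nontrivial limit carrying mass at least $H_{\max}^{-n}(\tfrac n2)^n\omega_n$ near $q$, which contradicts the vanishing of mass away from $p$. The main obstacle is this step. Blow-up at $q$ comes only with $|\psi_i(x_i')|\to\infty$, and not with local maximality, so Step 1 must be redone with a local almost-maximum selection; a Schoen-type choice of the maximum of $|\varphi_i(y)|\,(\delta-|y-y'_i|)^{(n-1)/2}$ works here. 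An alternative is to argue via $\varepsilon$-regularity instead. Small local mass $\int_{B_r}|\psi_i|^{2^*}<\varepsilon_0$, combined with the Dirac equation, implies uniform $L^\infty$ bounds on $B_{r/2}$, using the $H_i\ge h_0$ and $C^0$ bounds. So vanishing of mass near $q$ excludes blow-up at $q$ directly. I would use this $\varepsilon$-regularity, proved by bootstrapping the equation with the small-norm coefficient $|\psi_i|^{2^*-2}\in L^{n}$, as the key lemma.
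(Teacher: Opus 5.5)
Your proposal is correct and follows essentially the same route as the paper. The ingredients match: $\varepsilon$-regularity, rescaling at a maximum point to a nontrivial entire solution, the sharp lower bound $H(p)^{-n}(\frac n2)^n\omega_n$ squeezed against the energy hypothesis to force $H(p)=H_{\max}$ and a ground state, the Borrelli--Malchiodi--Wu classification with the normalization $|\Phi|\le|\Phi(0)|=1$, and $\varepsilon$-regularity to exclude a second blow-up point. The differences are minor. The paper first builds the concentration set $S$ and identifies it with the blow-up set before rescaling, whereas you start from a global maximum of $|\psi_i|$. That start is legitimate, since $f$ is radially decreasing and so $\max_{\mathbb R^n}|\varphi_i|$ is attained in $\{|y|\le|\pi_p(x_i)|\}$. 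The paper also proves $L^{2^*}$ convergence by a direct tail estimate rather than Brezis--Lieb.
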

Our second main result concerns the isolated simplicity
of the blow-up point.

\begin{thm}
\label{thm:intro-simple}
Under the hypotheses of Theorem~\ref{thm:intro-threshold},
let $p$ be the unique blow-up point
obtained there. If
\(
\sup\limits_i\|H_i\|_{C^1(U)}<\infty
\)
for some neighborhood $U$ of $p$, then $p$ is an
isolated simple blow-up point.
\end{thm}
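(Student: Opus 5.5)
We prove first that $p$ is isolated, then that it is simple, working throughout in the stereographic chart centered at $p$ with the spinors $\varphi_i$ of \eqref{eq:intro-euclidean} and the points $y_i$, scales $\lambda_i$ from Theorem~\ref{thm:intro-threshold}(ii).

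\textbf{Isolatedness.} Suppose the bound $|\varphi_i(y)|\le C|y-y_i|^{-\frac{n-1}{2}}$ fails on some fixed ball $B_{\bar r}(y_i)$. Then there are points $z_i$ with $r_i:=|z_i-y_i|\to 0$ and
\[
r_i^{\frac{n-1}{2}}|\varphi_i(z_i)|\to\infty .
\]
By a standard selection argument in the style of Schoen and Li, we may take $z_i$ to be a local maximum with $|\varphi_i|\le 2|\varphi_i(z_i)|$ on a ball of radius $\gg|\varphi_i(z_i)|^{-\frac{2}{n-1}}$. Rescaling at $z_i$ with $\mu_i:=|\varphi_i(z_i)|^{-\frac{2}{n-1}}$ gives a second nontrivial limit solution of $D\Phi=H(p)|\Phi|^{2^*-2}\Phi$. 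We need $C^0_{\rm loc}$ convergence here, which follows from local elliptic regularity for Dirac operators using the $C^0$ (indeed $C^1$) bounds on $\widetilde H_i$. The bubble supported near $z_i$ sits on a scale disjoint from the bubble at $y_i$, since $r_i/\lambda_i\to\infty$ or $r_i/\mu_i\to\infty$. Any nontrivial finite-energy solution carries mass at least $H(p)^{-n}(n/2)^n\omega_n$ by the sharp spinorial Sobolev inequality. Two such bubbles therefore give total mass at least twice the first threshold, contradicting \eqref{eq:intro-energy-threshold}. A cleaner route uses Theorem~\ref{thm:intro-threshold}(ii) directly: the convergence $\Phi_i\to\Phi$ in $L^{2^*}$ shows that $\int_{B_{\bar r}\setminus B_{R\lambda_i}(y_i)}|\varphi_i|^{2^*}\to 0$ as $i\to\infty$ and then $R\to\infty$. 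An $\varepsilon$-regularity estimate on annuli,
\[
\sup_{A_r}|\varphi_i|\le C r^{-\frac{n-1}{2}}\Bigl(\int_{A'_r}|\varphi_i|^{2^*}\Bigr)^{\frac1{2^*}},
\]
which follows from scaling and the small-energy regularity for the Dirac equation, then yields the isolated bound directly. I will use this second route.

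\textbf{Simplicity.} The function $\overline w_i$ has exactly one critical point on $(0,R\lambda_i)$ for large $i$, because it converges after rescaling to the corresponding function of $\Phi$. That function, $r^{n-1}\cdot$ an average of a profile $\sim(\lambda^2+r^2)^{-(n-1)}$, increases then strictly decreases with a single nondegenerate maximum at $r=\lambda$ (in $z$-units). So it suffices to show that $\overline w_i'<0$ on $[R\lambda_i,\rho)$ for fixed $\rho$. This requires the decay estimate
\begin{equation}\label{eq:plan-decay}
|\varphi_i(y)|\le C\lambda_i^{\frac{n-1}{2}}|y-y_i|^{-(n-1)},\qquad R\lambda_i\le|y-y_i|\le\rho,
\end{equation}
that is, faster than the isolated rate $|y-y_i|^{-\frac{n-1}{2}}$.

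To get \eqref{eq:plan-decay}, we pass to cylindrical coordinates $t=-\log|y-y_i|$. The isolated bound gives a bounded cylindrical spinor, and its linearized operator has spectral gap $\frac{n-1}{2}$, from the eigenvalues of the Dirac operator on $\mathbb S^{n-1}$ shifted appropriately. A three-annuli or maximum-principle argument then gives exponential decay towards both ends of the neck. The alternative is the one indicated in the abstract: apply the Kelvin transform at $y_i$ to turn the outer region into a neighborhood of infinity, rescale a second time, and use a Harnack-type comparison plus the smallness of energy in the neck. The $C^1$ bound on $H_i$ enters when controlling the error terms $\nabla\widetilde H_i$ in a Pohozaev-type identity on $B_r(y_i)$. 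That identity is used to exclude a slowly decaying ``neck'' component, namely the mode with exponent exactly $\frac{n-1}{2}$. With \eqref{eq:plan-decay} in hand, computing $\overline w_i'(r)$ shows it is dominated by the decaying mode and hence negative on $[R\lambda_i,\rho)$.

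\textbf{Main obstacle.} The hardest step is this passage from the isolated bound to the sharp decay \eqref{eq:plan-decay}. It has to rule out a second critical point of $\overline w_i$ at an intermediate scale $\lambda_i\ll r_i\ll1$. I would argue by contradiction, taking the first such $r_i$ and rescaling by $r_i$. The limit is singular at $0$ with finite energy, and after Kelvin inversion it is a solution on $\mathbb R^n\setminus\{0\}$ with a removable singularity. A Bôcher-type classification for Dirac spinors then identifies it as a Green-type spinor $\frac{y}{|y|^n}\cdot\Phi_0$ plus a regular part. The mass threshold forces the regular part to vanish, which makes $\overline w$ strictly decreasing and contradicts the assumed critical point at $r=1$.
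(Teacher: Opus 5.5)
Your isolatedness argument (the second route) is the paper's. The $L^{2^*}$ convergence in Theorem~\ref{thm:intro-threshold}(ii) makes the energy of $\varphi_i$ outside $B_{R\lambda_i}(y_i)$ small, scaled $\varepsilon$-regularity then handles $|y-y_i|>4R\lambda_i$, and maximality of $|\varphi_i(y_i)|$ handles the rest.

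The simplicity part has a genuine gap. Your main route ends with ``with the decay estimate in hand, $\overline w_i'$ is dominated by the decaying mode and hence negative.'' That does not follow. An upper bound $|\varphi_i(y)|\le C\lambda_i^{\frac{n-1}{2}}|y-y_i|^{1-n}$ only gives $\overline w_i(r)\le C\lambda_i^{n-1}r^{1-n}$. This is compatible with $\overline w_i$ having a local minimum followed by a local maximum at an intermediate scale $\lambda_i\ll\mu_i\ll1$, which is exactly the configuration to be excluded. What you need is a $C^1$ identification of the profile at every such scale.

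The tools you list for the decay do not supply this, and some are unavailable:
\begin{itemize}
\item There is no maximum principle or Harnack inequality for spinors.
\item Exponential decay ``toward both ends of the neck'' leaves the growing mode entering from the outer end (the regular part) to be controlled separately.
\item The cylindrical spectral-gap argument of Lemma~\ref{lem:local-green} uses monotonicity of $f_i=|\mathbb S^{n-1}|\,\overline w_i(e^t)$, i.e.\ simplicity itself.
\item There is no neutral mode for a Pohozaev identity to remove, since the gap of $D_{\mathrm{vert}}$ excludes it.
\end{itemize}
The paper uses no Pohozaev identity for this theorem. The $C^1$ bound on $H_i$ serves only to make $\overline w_i$ regular and to give $C^2$ convergence of $\overline w_i(\lambda_i s)$, which counts the critical points in $(0,R\lambda_i)$.

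Your closing contradiction argument points the right way, but its decisive step, ``the mass threshold forces the regular part to vanish,'' is asserted rather than proved. The setup is also off. Since $\mu_i/\lambda_i\to\infty$, the spinors $\xi_i(z)=\mu_i^{\frac{n-1}{2}}\varphi_i(y_i+\mu_i z)$ tend to $0$ away from the origin, so there is no nontrivial finite-energy limit. You must normalize by $|\xi_i(0)|=(\mu_i/\lambda_i)^{\frac{n-1}{2}}$. The normalized limit is a harmonic spinor on $\mathbb R^n\setminus\{0\}$, not in $L^{2^*}$ near $0$. A priori its regular part is an arbitrary entire harmonic spinor, and its singular coefficient must also be shown to be nonzero.

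The missing idea is a mechanism that turns the energy bound into control at all intermediate scales at once. The paper supplies it in Lemma~\ref{lem:kelvin-compactness}:
\begin{itemize}
\item The Kelvin transform $\Phi_i^K(x)=\frac{x}{|x|^n}\cdot\Phi_i(x/|x|^2)$ solves the same critical equation distributionally on $\mathbb R^n$.
\item It satisfies $\int_{B_r}|\Phi_i^K|^{2^*}=\int_{|z|>1/r}|\Phi_i|^{2^*}$, which the $L^{2^*}$ convergence makes uniformly small.
\item Hence $\varepsilon$-regularity gives uniform H\"older bounds near $0$ and $\Phi_i^K\to\Phi^K$ uniformly there, with $\Phi^K(0)=(n/H(p))^{n-1}\Phi(0)\neq0$.
\item Evaluating at $\frac{\lambda_i}{\mu_i}\frac{z}{|z|^2}\to0$ yields $|\xi_i(0)|\xi_i\to-(n/H(p))^{n-1}\frac{z}{|z|^n}\cdot\Phi(0)$ for every $\mu_i/\lambda_i\to\infty$, with no regular part.
\end{itemize}
As a byproduct you get the global decay $|\Phi_i(z)|\le C(1+|z|)^{1-n}$; with it, your B\^ocher--Liouville route would also close.

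Finally, to pass $\overline w_i'(\mu_i)=0$ to the limit you need $C^1$ convergence of $|\xi_i(0)|^2|\xi_i|^2$. The paper gets this from $\Delta|u|^2=2|\nabla u|^2-2V^2|u|^2$ (Lemma~\ref{lem:simp-squared-norm}). The limit $c\,r^{1-n}$ then has derivative $-(n-1)c<0$ at $r=1$, which is the contradiction.
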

Our final result gives compactness for solutions with critical
mass slightly above the first threshold.
The estimate is uniform under small $C^2$ perturbations of
the coefficient. We use the convention
$\Delta_{g_0}=\operatorname{div}_{g_0}\nabla_{g_0}$.
\begin{thm}\label{thm:intro-compactness}
Let $n\ge3$ and let $H\in C^2(\mathbb S^n)$ be positive.
Assume that
\(
\Delta_{g_0}H(p)<0\)
whenever \(
H(p)=\max_{\mathbb S^n}H.
\)
Then there exist $\varepsilon,\delta,C>0$, depending only
on $H$ and $n$, such that, for every positive
$\widehat H\in C^2(\mathbb S^n)$ with
$\|\widehat H-H\|_{C^2(\mathbb S^n)}<\delta$,
every weak solution
$\psi\in H^{1/2}(\mathbb S^n,\Sigma\mathbb S^n)$ of
\[
D_{g_0}\psi=\widehat H|\psi|^{2^*-2}\psi
\qquad\text{on }\mathbb S^n
\]
satisfying
\[
\int_{\mathbb S^n}|\psi|^{2^*}\,dv_{g_0}
\le \widehat H_{\max}^{-n}
\left(\frac n2\right)^n\omega_n+\varepsilon
\]
has $\|\psi\|_{L^\infty(\mathbb S^n)}\le C$.
\end{thm}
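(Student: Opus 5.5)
We argue by contradiction. Suppose there are positive $\widehat H_i\in C^2(\mathbb S^n)$ with $\|\widehat H_i-H\|_{C^2(\mathbb S^n)}\to0$, and weak solutions $\psi_i$ of $D_{g_0}\psi_i=\widehat H_i|\psi_i|^{2^*-2}\psi_i$, such that
\[
\int_{\mathbb S^n}|\psi_i|^{2^*}\,dv_{g_0}\le (\widehat H_i)_{\max}^{-n}\left(\frac n2\right)^n\omega_n+\varepsilon_i,
\qquad \varepsilon_i\to0,
\qquad \|\psi_i\|_{L^\infty(\mathbb S^n)}\to\infty .
\]
Since $(\widehat H_i)_{\max}\to H_{\max}$, the $\limsup$ of the critical mass is at most $H_{\max}^{-n}(n/2)^n\omega_n$. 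Positivity of $H$ gives a uniform lower bound $\widehat H_i\ge h_0>0$. Hence \eqref{eq:intro-assumptions} and \eqref{eq:intro-energy-threshold} hold with $H_i=\widehat H_i$.

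This reduction has two delicate points. First, the constants must be uniform, which is why we let $\varepsilon_i\to0$ and $\delta_i\to0$ simultaneously. Second, the threshold of Theorem~\ref{thm:intro-threshold} is stated with $H_{\max}$ of the limit. The slack $\varepsilon_i$ and the difference $(\widehat H_i)_{\max}-H_{\max}$ both vanish, so only the $\limsup$ enters and the hypothesis is met.

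Theorem~\ref{thm:intro-threshold} then gives, after a subsequence, a unique blow-up point $p$ with $H(p)=H_{\max}$. The mass concentrates as a single ground-state bubble with profile $\Phi$. Since $\widehat H_i\to H$ in $C^2$, the $C^1$ bounds needed for Theorem~\ref{thm:intro-simple} hold, so $p$ is isolated simple.

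We then invoke the later part of the paper announced in the abstract: when the coefficients converge in $C^2$ near $p$, the Pohozaev identity forces $\operatorname{Hess}_{g_0}H(p)=0$. Taking the trace gives $\Delta_{g_0}H(p)=0$. This contradicts the hypothesis $\Delta_{g_0}H(p)<0$ at the global maximum $p$, which proves the uniform bound.

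If that Hessian result were not directly available, I would derive the key step myself. In stereographic coordinates centered at $p$, apply the Pohozaev identity of \cite{BE} to $\varphi_i$ on a ball $B_\rho(y_i)$. The interior term is
\[
\frac{n-1}{2n}\int_{B_\rho}\bigl(y\cdot\nabla\widetilde H_i\bigr)|\varphi_i|^{2^*},
\]
and the remainder is a boundary integral.

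\begin{enumerate}
\item[(a)] By the sharp decay estimate for isolated simple points, $|\varphi_i|\le C\lambda_i^{(n-1)/2}|y-y_i|^{1-n}$, and the corresponding gradient bound, the boundary term is $O(\lambda_i^{n-1})$.
\item[(b)] Taylor expand $\nabla\widetilde H_i$ at $y_i$ and use the convergence $\Phi_i\to\Phi$ together with the decay to control the tails. The first-order part must balance first: $\nabla\widetilde H_i(y_i)=O(\lambda_i)$, together with a relation identifying the displacement of $y_i$.
\item[(c)] At second order one gets $\lambda_i^2\,\Delta\widetilde H_i(0)\int|z|^2|\Phi|^{2^*}\,dz$, up to a positive constant. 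For $n\ge3$ this integral converges, since $|\Phi|^{2^*}\sim|z|^{-2n}$, and it dominates $\lambda_i^{n-1}$ when $n>3$.
\end{enumerate}

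This yields $\Delta H(p)=0$ directly, without needing the full Hessian statement. The trace is what matters, because radial symmetry of $|\Phi|$ kills the off-diagonal terms.

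The main obstacle is the borderline dimension $n=3$. There the boundary term is $O(\lambda_i^2)$, comparable to the second-order term, so a crude bound is not enough. One must use the normalized Green limit to evaluate the boundary contribution exactly: for isolated simple blow-up it equals $\lambda_i^{n-1}$ times a boundary integral of the normalized Green limit. On the round sphere this term carries no positive-mass contribution. Also delicate is the cancellation of the first-order terms, which requires the refined location estimate $|y_i|=O(\lambda_i)$. I would first try to obtain both from the cylindrical estimates, and then use the $C^2$ smallness of $\widehat H_i-H$ to pass $\Delta\widehat H_i(y_i)\to\Delta_{g_0}H(p)$.
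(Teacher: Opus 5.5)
Your argument is correct and is essentially the paper's proof. The paper likewise negates the statement with $\varepsilon=\delta=1/i$ and $C=i$, and uses $\max\widehat H_i\to H_{\max}$ to meet the threshold hypothesis of Theorem~\ref{thm:intro-threshold}; it then applies Proposition~\ref{prop:second-order-obstruction} (isolated simplicity, sharp decay, Green convergence and the Pohozaev identity) to get $\Delta_{g_0}H(p)=0$, a contradiction.

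In your fallback sketch, no location estimate $|y_i|=O(\lambda_i)$ is needed. The paper centres the dilation field at $y_i$ (that is, $X=y-y_i$) and obtains $\nabla\widetilde H_i(y_i)=o(\lambda_i)$ from the translation identities. It then uses the Green limit, for which $\operatorname{Re}\langle\nu\cdot G,G\rangle=0$ with $G=\frac{z}{|z|^n}\cdot\Phi(0)$, to make the boundary term $o(\lambda_i^{n-1})$ for every $n\ge3$, not only $n=3$.
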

The concentration result at the first threshold is 
related to the energy identities and stability results
discussed above. Our main contribution is a precise description of the
solutions near the blow-up point.
We prove isolated simplicity and obtain sharp pointwise,
gradient, and Green-type asymptotics.
Under local $C^2$ convergence of the coefficients, we show
that the Hessian of the limiting coefficient vanishes at
the blow-up point.
This necessary condition yields compactness near the first
threshold, uniformly under small $C^2$ perturbations of
the coefficient. A natural next step is to study higher energy levels,
where several bubbles and their interactions may occur.

In the scalar setting, blow-up analysis has played a central
role in compactness theory since the work of
Schoen \cite{Schoen1989}.
Caffarelli, Gidas, and Spruck \cite{CGS1989} classified
the positive entire solutions of the critical Euclidean
equation, which give the standard bubble profiles.
For the prescribed scalar curvature problem,
Li \cite{Li1995JDE} developed a detailed analysis of isolated
and isolated simple blow-up points.
His argument combines sharp local estimates, rescaling
at a second critical point of the weighted spherical average,
and a Pohozaev-type identity to prove simplicity under
suitable assumptions on the coefficient.
Further applications of blow-up analysis to compactness
for Yamabe-type equations can be found, for example, in
\cite{LiZhu1999,KMS2009}.

 Isolation and simplicity describe the local shape of a
concentrating bubble. As its peak becomes higher and narrower,
isolation gives a uniform bound on the size of the solution
in terms of the distance from its center. Simplicity requires
the weighted spherical average to increase and then decrease,
as it does for a standard bubble, with no further critical
point in a fixed small neighborhood. Convergence to a bubble
after rescaling describes the solution near its peak.
Isolated simplicity gives additional control of its radial
behavior farther from the center.

For the nonlinear Dirac equation, the maximum principle and Harnack inequalities for positive
scalar solutions do not apply directly.
Our basic tools are small-energy regularity, interior elliptic
estimates, and the conformal covariance of the Dirac operator.
Under the energy bound \eqref{eq:intro-energy-threshold},
the first bubble exhausts the total energy, small-energy regularity then yields isolation.
If simplicity fails, the weighted spherical average
$\bar w_i$ has a second critical point $\mu_i$ satisfying
\[
\mu_i\to0,
\qquad
\frac{\mu_i}{\lambda_i}\to\infty,
\qquad
\bar w_i'(\mu_i)=0.
\]
We apply the Kelvin transform to the rescaled solutions
$\Phi_i$. The smallness of the energy tails gives uniform
compactness near the origin. This determines the normalized
limit after rescaling by $\mu_i$.
A scalar identity for the squared norm then yields
$C^1$ convergence of the corresponding normalized weighted
spherical averages.
Their derivatives vanish at $r=1$ because
$\bar w_i'(\mu_i)=0$.
However, their limit is a positive multiple of $r^{1-n}$
and has a strictly negative derivative there.
This contradiction proves isolated simplicity without
using a Pohozaev-type identity. For the compactness result, we combine the sharp
decay estimate and the Green convergence with the
Pohozaev identity of Ammann, Humbert and Ould Ahmedou
\cite{BE}. This gives a second-order condition at the
blow-up point and excludes blow-up under the assumption
of Theorem~\ref{thm:intro-compactness}.

The paper is organized as follows. Section~\ref{Preliminaries} collects
the geometric and analytic preliminaries. In Section~\ref{Compactness alternative},
we establish small-energy regularity and a compactness
alternative, and construct the rescaled limits.
Sections~\ref{Proof of Theorem 1.1} and~\ref{Proof of Theorem 1.2} prove Theorems~\ref{thm:intro-threshold} and~\ref{thm:intro-simple}, respectively.
In Section~\ref{Proof of Theorem 1.3}, we derive sharp decay estimates and Green
convergence, and use the Pohozaev identity to prove
Theorem~\ref{thm:intro-compactness}.

\section{Preliminaries}\label{Preliminaries}
\subsection{Spin structures and the Dirac operator}
\label{subsec:spin-geometry}

In this subsection, we briefly recall some standard facts about spin
geometry and the Dirac operator. We refer to
\cite{A2003,BHMMM2015,F2000,LM1989}
for further background. Let $(M^n,g,\sigma)$ be an $n$-dimensional spin manifold with $n\ge 2$, where $g$ is a Riemannian metric and $\sigma : P_{Spin}(M)\to P_{SO}(M)$ is a fixed spin structure. Let
$
\rho:Spin(n)\to Aut(\Sigma_n)
$
be the complex spin representation, and let
$$
\Sigma M:=P_{Spin}(M)\times_\rho \Sigma_n
$$
be the associated complex spinor bundle. The bundle $\Sigma M$ carries a natural spinorial Levi--Civita connection $\nabla$, a pointwise Hermitian scalar product $\langle\cdot,\cdot\rangle_g$, and a Clifford multiplication
$
\cdot_g:TM\otimes \Sigma M\to \Sigma M$.
Clifford multiplication satisfies
\[
X\cdot_g Y\cdot_g\psi+Y\cdot_g X\cdot_g\psi
=-2g(X,Y)\psi.
\]
These structures are compatible in the sense that
\[
\langle X\cdot_g\varphi,\psi\rangle_g=-\langle\varphi,X\cdot_g\psi\rangle_g,
\]
\[
X\langle\varphi,\psi\rangle_g=\langle\nabla_X\varphi,\psi\rangle_g+\langle\varphi,\nabla_X\psi\rangle_g,
\]
and
\[
\nabla_X (Y\cdot_g\varphi)=(\nabla_XY)\cdot_g\varphi+Y\cdot_g\nabla_X\varphi
\]
for all $X,Y\in\Gamma(TM)$ and $\varphi,\psi\in\Gamma(\Sigma M)$. We write
\(
(\varphi,\psi)_g:=\operatorname{Re}\langle\varphi,\psi\rangle_g\),
\(
|\psi|_g^2:=\langle\psi,\psi\rangle_g.
\)

If \(\{e_1,\ldots,e_n\}\) is a local oriented orthonormal frame of
\(TM\), the Dirac operator is defined by
$$
D_g\psi
:=
\sum_{j=1}^n
e_j\cdot_g\nabla_{e_j}\psi.
$$
The Dirac operator is a first-order elliptic operator
with the following standard properties; see \cite{LM1989}.

\begin{enumerate}
\item[(i)] \emph{Formal self-adjointness.}
For $\varphi,\psi\in C_c^\infty(M,\Sigma M)$,
\[
\int_M\langle D_g\varphi,\psi\rangle_g\,dv_g
=
\int_M\langle\varphi,D_g\psi\rangle_g\,dv_g.
\]

\item[(ii)] \emph{Leibniz rule.}
For $f\in C^\infty(M,\mathbb R)$ and
$\psi\in C^\infty(M,\Sigma M)$,
\[
D_g(f\psi)=\nabla^g f\cdot_g\psi+fD_g\psi.
\]

\item[(iii)] \emph{Schr\"odinger--Lichnerowicz formula.}
\[
D_g^2=\nabla^*\nabla+\frac{\mathrm{Scal}_g}{4},
\]
where $\nabla^*\nabla$ is the connection Laplacian
and $\mathrm{Scal}_g$ is the scalar curvature.
\end{enumerate}
\subsection{Conformal covariance }\label{Conformal covariance and stereographic projection}

We recall the conformal covariance of the Dirac operator;
see \cite{Xu2024}.
Let $\bar g=f^2g$, where $f$ is a smooth positive
function on $M$. There is a fiberwise isometry
\(
\beta:\Sigma(M,g)\longrightarrow\Sigma(M,\bar g)
\)
such that
\begin{equation}
\label{eq:conformal-dirac}
D_{\bar g}
\bigl(f^{-\frac{n-1}{2}}\beta(\psi)\bigr)
=
f^{-\frac{n+1}{2}}\beta(D_g\psi).
\end{equation}

Let $p\in\mathbb S^n$, and let
\(
\pi_p:\mathbb S^n\setminus\{-p\}\longrightarrow\mathbb R^n\),
\(\pi_p(p)=0,
\)
be the stereographic projection from the antipodal
point $-p$. Then
\(
(\pi_p^{-1})^*g_0=f(y)^2g_{\mathbb R^n}\),
\(
f(y)=\frac{2}{1+|y|^2}.
\)
For $0<\rho<\pi$, set
\(
U_\rho(p):=\pi_p\bigl(B_\rho^{g_0}(p)\bigr)
=B_{\tan(\rho/2)}(0).
\)
When $p$ is fixed, we simply write $U_\rho$.
Let
\(
F:\Sigma(\mathbb R^n,f^2g_{\mathbb R^n})
\longrightarrow\Sigma(\mathbb R^n,g_{\mathbb R^n})
\)
be the corresponding fiberwise isometry.
For a spinor $\psi$ on $B_\rho^{g_0}(p)$, define
\(
\varphi(y)
=
f(y)^{\frac{n-1}{2}}
F\bigl(\psi\circ\pi_p^{-1}(y)\bigr)\),
\(y\in U_\rho.
\)

By \eqref{eq:conformal-dirac}, if
\[
D_{g_0}\psi=H|\psi|^{2^*-2}\psi
\qquad\text{in }B_\rho^{g_0}(p),
\]
then
\begin{equation}\label{eq:euclidean-dirac}
D_{g_{{\mathbb R^n}}}\varphi
=\widetilde H(y)|\varphi|^{2^*-2}\varphi
\qquad\text{in }U_\rho,
\end{equation}
where
\(
\widetilde H(y):=H\bigl(\pi_p^{-1}(y)\bigr).
\)
Since $F$ is a fiberwise isometry and
$(\pi_p^{-1})^*dv_{g_0}=f^n\,dy$, we also have
\begin{equation}
\label{eq:conformal-critical-integral}
\int_{U_\rho}|\varphi|^{2^*}\,dy
=
\int_{B_\rho^{g_0}(p)}|\psi|^{2^*}\,dv_{g_0}.
\end{equation}

We next pass from the Euclidean coordinates above to cylindrical
coordinates. For $y_0\in\mathbb R^n$ and $R>0$, write
\[
y=y_0+e^t\theta,
\qquad
(t,\theta)\in(-\infty,\log R)\times\mathbb S^{n-1}.
\]
Then
\[
g_{\mathbb R^n}=e^{2t}g_{\mathrm{cyl}},
\qquad
g_{\mathrm{cyl}}=dt^2+g_{\mathbb S^{n-1}}.
\]
Let
$\beta:\Sigma_{g_{\mathbb R^n}}\to\Sigma_{g_{\mathrm{cyl}}}$
be the corresponding fibrewise isometry.
For a spinor $\varphi$ on $B_R(y_0)\setminus\{y_0\}$, set
\begin{equation}\label{eq:cyl-transform}
U(t,\theta)
=e^{\frac{n-1}{2}t}
\beta\bigl(\varphi(y_0+e^t\theta)\bigr).
\end{equation}
Conformal covariance gives
\begin{equation}\label{eq:cyl-covariance}
D_{\mathrm{cyl}}U(t,\theta)
=e^{\frac{n+1}{2}t}
\beta\bigl((D_{{g_{{\mathbb R^n}}}}\varphi)(y_0+e^t\theta)\bigr).
\end{equation}
Following the product
decomposition of Ammann and Jammes
\cite{AmmannJammes}:
\begin{equation}\label{eq:cyl-decomposition}
D_{\mathrm{cyl}}=D_{\mathrm{vert}}+D_{\mathrm{hor}},
\qquad
D_{\mathrm{vert}}
=\sum_{\alpha=1}^{n-1}e_\alpha\cdot\nabla_{e_\alpha},
\qquad
D_{\mathrm{hor}}=\partial_t\cdot\nabla_{\partial_t},
\end{equation}
where $(e_1,\ldots,e_{n-1})$ is a local orthonormal frame of $\mathbb S^{n-1}$. The connection and Clifford multiplication
are those of $g_{\mathrm{cyl}}$, with the spin structure
induced from Euclidean space.
The operators $D_{\mathrm{vert}}$ and $D_{\mathrm{hor}}$
anticommute.
 Moreover, $D_{\mathrm{vert}}$ is independent
of $t$ and self-adjoint on each slice $\{t\}\times\mathbb S^{n-1}$. Since $\nabla_{e_\alpha}\partial_t=0$ and
$g_{\mathrm{cyl}}(e_\alpha,\partial_t)=0$
for $\alpha=1,\ldots,n-1$, the Clifford relations give
\[
D_{\mathrm{vert}}(\partial_t\cdot U)
=-\partial_t\cdot D_{\mathrm{vert}}U.
\]
Consequently, $\partial_t\cdot D_{\mathrm{vert}}$
is also self-adjoint. Using the Dirac spectrum of the round sphere
\cite{Baer1996}, we obtain
\[
\operatorname{spec}(D_{\mathrm{vert}})
=
\left\{
\pm\left(\frac{n-1}{2}+k\right):
k=0,1,\ldots
\right\}.
\]
In particular,
\begin{equation}\label{eq:cyl-gap}
\|D_{\mathrm{vert}}U\|_{L^2(\mathbb S^{n-1})}
\ge
\frac{n-1}{2}\|U\|_{L^2(\mathbb S^{n-1})}
\end{equation}
for every smooth section $U$ of the spinor bundle
restricted to a slice.
\subsection{Function spaces and elliptic estimates}
Let $(M^n,g)$ be a closed Riemannian spin manifold,
$n\geq2$. Following \cite{Isobe2011,Isobe2013},
we recall the spectral definition of fractional
Sobolev spaces of spinors. The self-adjoint realization of $D_g$ on
$L^2(M,\Sigma M)$ has compact resolvent.
Thus, there exists an
\(L^2\)-orthonormal basis of eigenspinors
\(\{\varphi_k\}_{k\geq1}\) satisfying
$D_g\varphi_k=\lambda_k\varphi_k$ and $|\lambda_k|\to\infty$ as $k\to\infty$.
For $s \geq 0$,  define 
$$
\left|D_g\right|^s \varphi=\sum_{k=1}^{\infty}\left|\lambda_k\right|^s \alpha_k \varphi_k,
$$
for $\varphi=\sum_{k=1}^{\infty} \alpha_k \varphi_k \in L^2(M, \Sigma M)$. Denoting the domain of $\left|D_g\right|^s$ by $H^s(M, \Sigma M)$, we see that $\varphi\in H^s(M, \Sigma M)$ if and only if $\sum_{k=1}^{\infty}\left|\lambda_k\right|^{2 s}\left|\alpha_k\right|^2<\infty$. Hence $H^s(M, \Sigma M)$ coincides with the usual Sobolev space of order $s$, that is $W^{s, 2}(M, \Sigma M)$; see \cite{A2003}. We equip $H^s(M,\Sigma M)$ with the inner product
$$
\left( \varphi,\psi\right)_{s,2}
:=\bigl(|D_g|^s\varphi,|D_g|^s\psi\bigr)_{L^2}
+
(\varphi,\psi)_{L^2},
$$
where $(\cdot,\cdot)_{L^2}$ denotes the $L^2$-inner product on spinors, and write $\|\cdot\|_{H^s}$ for the induced norm. 
Thus, for
\(\varphi=\sum_{k=1}^{\infty}\alpha_k\varphi_k\), we have
$$
\|\varphi\|_{H^s}^2
=
\sum_{k=1}^{\infty}
\bigl(1+|\lambda_k|^{2s}\bigr)|\alpha_k|^2.
$$ We use $L^p$, $W^{k,p}$, and $C^{k,\alpha}$ for the
usual Lebesgue, Sobolev, and H\"older spaces of
functions or spinors, respectively.
The underlying domain and bundle are omitted
when clear from the context. Let $2^*=\frac{2 n}{n-1}$ be the critical Sobolev exponent for the embedding $H^{1 / 2}(M, \Sigma M) \hookrightarrow L^q(M, \Sigma M)$, $1 \leq q \leq 2^*$.

We then introduce the corresponding Euclidean configuration space. On $\mathbb{R}^n$, we consider the fractional space
\[
\mathcal D^{1/2}(\mathbb R^n,\Sigma\mathbb R^n)
=
\left\{
\varphi\in L^{2^*}(\mathbb R^n,\Sigma\mathbb R^n):
|D_{g_{\mathbb{R}^n}}|^{1/2}\varphi\in L^2
\right\}
\]
endowed with the norm
$$
\|\varphi\|_{\mathcal D^{1/2}}
:=
\bigl\||D_{g_{\mathbb{R}^n}}|^{1/2}\varphi\bigr\|_2+\|\varphi\|_{2^*}.
$$
Here $D_{g_{\mathbb{R}^n}}$ denotes the Euclidean Dirac operator, and $|D_{g_{\mathbb{R}^n}}|^{1/2}$ is defined through the Fourier transform by
$
\mathscr{F}\bigl(|D_{g_{\mathbb{R}^n}}|^{1/2}\psi\bigr)(\xi)
=
|\xi|^{1/2}\mathscr{F}(\psi)(\xi)$ and $\|\left|D_{g_{\mathbb{R}^n}}\right|^{1 / 2} \psi\|_2=\||\cdot|^{1 / 2} \mathscr{F}(\psi)\|_2$. Notice that $\mathcal D^{1/2}(\mathbb R^n,\Sigma\mathbb R^n)$ is isomorphic to  
$H^{1/2}(\mathbb S^n,\Sigma\mathbb S^n)$ via the stereographic projection.

Finally, we recall the interior estimates for the
Dirac operator on a smooth Riemannian spin manifold; see
\cite[Appendix A]{Ammann2009}.
Let $\Omega',\Omega\subset M$ be open sets such that
$\overline{\Omega'}$ is compact and contained in $\Omega$.
For $k\in\mathbb N_0$ and $1<p<\infty$,
\[
\|\psi\|_{W^{k+1,p}(\Omega')}
\le C\left(
\|D_g\psi\|_{W^{k,p}(\Omega)}
+\|\psi\|_{L^p(\Omega)}
\right),
\]
where $C$ is independent of $\psi$. If $\psi\in L^p(\Omega)$ and
$D_g\psi\in W^{k,p}(\Omega)$ in the sense of
distributions, then
$\psi\in W^{k+1,p}(\Omega')$ and the above
estimate holds.
Analogous interior Schauder estimates also hold.

\section{Compactness alternative}\label{Compactness alternative}
\subsection{Local \texorpdfstring{$\varepsilon$}{epsilon}-regularity}

We first prove a local $\varepsilon$-regularity result for the critical nonlinear
Dirac equation on $\Rn$.
 \begin{lem}[Local $\varepsilon$-regularity]\label{lem:small-energy-regularity-Dirac}
Let $n\geq3$, and let
$\varphi\in L^{2^*}(B_2,\Sigma\mathbb R^n)$
be a distributional solution of
\[
D_{g_{\R^n}}\varphi
=
\widetilde H(y)|\varphi|^{2^*-2}\varphi
\quad\text{in }B_2,
\qquad
\|\widetilde H\|_{L^\infty(B_2)}\leq A.
\]
There exists $\varepsilon_0=\varepsilon_0(n,A)>0$
such that, if
\[
\int_{B_2}|\varphi|^{2^*}\,dy\leq\varepsilon_0,
\] then, for every $1<q<\infty$,
\[
\|\varphi\|_{L^\infty(B_1)}
\leq C\|\varphi\|_{L^{2^*}(B_2)},\qquad
\|\varphi\|_{W^{1,q}(B_1)}
\leq C_q\|\varphi\|_{L^{2^*}(B_2)},
\]
where $C=C(n,A)$ and $C_q=C(n,A,q)$.
\end{lem}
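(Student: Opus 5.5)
We prove the lemma by a bootstrap that starts from the critical integrability $L^{2^*}$ and uses the smallness of $\int_{B_2}|\varphi|^{2^*}$ to absorb the critical nonlinearity. Write the equation as a linear one,
\[
D_{g_{\R^n}}\varphi=V\varphi,\qquad V:=\widetilde H|\varphi|^{2^*-2},
\]
so that $|V|\le A|\varphi|^{\frac{2}{n-1}}$. Then $V\in L^{n}(B_2)$ with $\|V\|_{L^n(B_2)}\le A\|\varphi\|_{L^{2^*}(B_2)}^{\frac{2}{n-1}}\le A\varepsilon_0^{1/n}$, since $\frac{2}{n-1}\cdot n=2^*$. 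The potential is therefore small in the scale-invariant space $L^n$, which is exactly what a first-order operator requires.

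\emph{Step 1: improved integrability.} Fix cutoffs $\eta\in C_c^\infty(B_{r'})$ with $\eta\equiv1$ on $B_r$, where $1\le r<r'\le 2$. The Leibniz rule gives
\[
D(\eta\varphi)=\eta V\varphi+\nabla\eta\cdot\varphi.
\]
Represent $\eta\varphi$ through the fundamental solution of the Euclidean Dirac operator, $\eta\varphi=\Gamma*D(\eta\varphi)$ with $|\Gamma(y)|\le C|y|^{1-n}$; this is legitimate for compactly supported distributions in $L^{2^*}$. The Hardy--Littlewood--Sobolev inequality then yields, for $1<s<n$ and $\frac1{q}=\frac1s-\frac1n$,
\[
\|\eta\varphi\|_{L^{q}}\le C\|V\,\eta\varphi\|_{L^s}+C\|\nabla\eta\cdot\varphi\|_{L^s}.
\]
By H\"older, $\|V\eta\varphi\|_{L^s}\le\|V\|_{L^n}\|\eta\varphi\|_{L^q}$, so for $\varepsilon_0$ small the first term is absorbed into the left-hand side. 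Taking $s=\frac{2n}{n+1}$, so that $q=2^*$, shows the absorption is consistent, and a priori finiteness is automatic at that level.

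To raise $q$ above $2^*$ we need a priori finiteness of $\|\eta\varphi\|_{L^q}$ for larger $q$. I would obtain it by a truncation argument: split $V=V\chi_{\{|V|>K\}}+V\chi_{\{|V|\le K\}}$, absorb the large part using its small $L^n$ norm, and treat the bounded part as a lower-order term. Alternatively, one can apply a contraction argument for the operator $\psi\mapsto\Gamma*(\eta V\psi)$ on $L^{2^*}\cap L^q$. This gives $\varphi\in L^q(B_{r})$ for every $q<\infty$, with $\|\varphi\|_{L^q(B_r)}\le C_q\|\varphi\|_{L^{2^*}(B_2)}$, after finitely many shrinkings of the radius between $1$ and $2$.

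\emph{Step 2: $W^{1,q}$ and $L^\infty$ bounds.} Once $\varphi\in L^q$ for all $q<\infty$, the right-hand side satisfies $|\widetilde H|\varphi|^{2^*-2}\varphi|\le A|\varphi|^{\frac{n+1}{n-1}}\in L^q$ for every $q$. The interior estimate recalled in the preliminaries (the case $k=0$) gives
\[
\|\varphi\|_{W^{1,q}(B_1)}\le C\bigl(\|D\varphi\|_{L^q(B_{r})}+\|\varphi\|_{L^q(B_r)}\bigr).
\]
Taking $q>n$, Morrey's embedding then yields the $L^\infty(B_1)$ bound. To make the estimates linear in $\|\varphi\|_{L^{2^*}(B_2)}$, note that
\[
\|\,|\varphi|^{\frac{n+1}{n-1}}\|_{L^q}\le\|\varphi\|_{L^{2^*}(B_2)}^{\frac{2}{n-1}}\cdot(\text{higher }L^{q'}\text{ norm of }\varphi),
\]
and the factor $\|\varphi\|_{L^{2^*}(B_2)}^{\frac2{n-1}}\le\varepsilon_0^{1/n}$ is bounded. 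Applying the same linear estimate at each step of the bootstrap therefore yields constants $C(n,A)$ and $C(n,A,q)$.

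\emph{Main obstacle.} The key difficulty is Step 1, namely passing beyond $L^{2^*}$. The nonlinearity is exactly critical, so no gain comes from H\"older alone. One needs the absorption via the small $L^n$ norm of $V$ combined with a rigorous justification of a priori finiteness, which the truncation of $V$ (or the contraction argument) provides. Since $\varphi$ is only a distributional $L^{2^*}$ solution, I would also first verify, by mollification, that $\eta\varphi=\Gamma*D(\eta\varphi)$ holds.
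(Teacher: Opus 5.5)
Your proposal is correct and follows essentially the paper's route. You use the $L^n$-small potential $V=\widetilde H|\varphi|^{2^*-2}$, the representation $\eta\varphi=\Gamma*(V\eta\varphi)+\Gamma*(\nabla\eta\cdot\varphi)$, and a Hardy--Littlewood--Sobolev contraction on $L^{2^*}\cap L^q$ whose fixed point is identified with $\eta\varphi$ by uniqueness in $L^{2^*}$. Interior Dirac estimates and Morrey's embedding then finish the argument. The one difference is in how integrability is raised. The paper does a single contraction step (to $q_0=\frac{2n}{n-2}$) and then uses interior $W^{1,p}$ estimates with Sobolev embedding, which need no further smallness. If you instead iterate the contraction, you must stop after finitely many exponents, reaching some $q>n(2^*-1)$ that depends only on $n$. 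Otherwise the HLS constants, and hence $\varepsilon_0$, would depend on $q$, so ``every $q<\infty$'' in Step 1 should not be taken literally.
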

\begin{proof}
We take $\varepsilon_0\leq1$ .
Since $n(2^*-2)=2^*$,
\begin{equation}
\label{eq:eps-small-coefficient}
\bigl\|\widetilde H|\varphi|^{2^*-2}\bigr\|_{L^n(B_2)}
\leq A\varepsilon_0^{1/n}.
\end{equation}
We first improve the integrability of $\varphi$.
Set $q_0=2n/(n-2)>2^*$.
Choose $\eta\in C_c^\infty(B_2)$ with $0\leq\eta\leq1$,
$\eta=1$ on $B_{7/4}$, and $|\nabla\eta|\leq C$.
The product rule gives
\[
D_{g_{\R^n}}(\eta\varphi)
=\widetilde H|\varphi|^{2^*-2}\eta\varphi+\nabla\eta\cdot\varphi
\]
in the sense of distributions on $\mathbb R^n$, where all products are
extended by zero. Below, the coefficient
$\widetilde H|\varphi|^{2^*-2}$ is also extended by zero outside $B_2$.

Let $\Gamma$ be the Euclidean Dirac fundamental solution; see
\cite{BF2020}. With our Clifford convention,
\[
\Gamma(y)=-\frac{1}{\omega_{n-1}}\frac{y}{|y|^n}\cdot,
\qquad D_{g_{\R^n}}\Gamma=\delta_0\operatorname{Id},
\qquad \omega_{n-1}=|\mathbb S^{n-1}|.
\]
For every compactly supported spinor distribution $u$, one has
$\Gamma*(D_{g_{\R^n}}u)=u$.
Indeed, if $G$ is the scalar fundamental solution of $-\Delta$, then
$\Gamma=D_{g_{\R^n}}G$, and distributional convolution gives
\[
\Gamma*(D_{g_{\R^n}}u)
=G*(D_{g_{\R^n}}^{\,2}u)
=G*(-\Delta u)=(-\Delta G)*u=u.
\]
Applying this identity to $u=\eta\varphi$ yields
\begin{equation}
\label{eq:eps-integral-equation}
\eta\varphi
=\Gamma*\bigl(\widetilde H|\varphi|^{2^*-2}\eta\varphi\bigr)
 +\Gamma*(\nabla\eta\cdot\varphi).
\end{equation}
We have
\[
\bigl\|\widetilde H|\varphi|^{2^*-2}\eta\varphi
\bigr\|_{L^{\frac{2n}{n+1}}(\mathbb R^n)}
\leq A\|\varphi\|_{L^{2^*}(B_2)}^{2^*-1}.
\]
Moreover, by H\"older's inequality,
\[
\|\nabla\eta\cdot\varphi\|_{L^{\frac{2n}{n+1}}(\mathbb R^n)}
\leq \|\nabla\eta\|_{\infty}|B_2|^{1/n}
\|\varphi\|_{L^{2^*}(B_2)}.
\]
Thus both source terms belong to $L^{\frac{2n}{n+1}}(\mathbb R^n)$.
Since $|\Gamma(y)|\leq C_n|y|^{1-n}$ and
$\frac{1}{2^*}=\frac{n+1}{2n}-\frac{1}{n}$, the Hardy--Littlewood--Sobolev
inequality gives
\[
\|\Gamma*F\|_{L^{2^*}(\mathbb R^n)}
\leq C_n\|F\|_{L^{\frac{2n}{n+1}}(\mathbb R^n)}.
\]
Consequently, both convolution terms belong to
$L^{2^*}(\mathbb R^n)$.

For $q\in\{2^*,q_0\}$, the Hardy--Littlewood--Sobolev and H\"older
inequalities imply
\begin{equation}
\label{eq:eps-contraction-estimate}
\begin{aligned}
\bigl\|\Gamma*(\widetilde H|\varphi|^{2^*-2}v)\bigr\|_{L^q}
&\leq C_{n,q}
 \bigl\|\widetilde H|\varphi|^{2^*-2}v\bigr\|_{L^{nq/(n+q)}}\\
&\leq C_{n,q}A\varepsilon_0^{1/n}\|v\|_{L^q}.
\end{aligned}
\end{equation}
Choose $\varepsilon_0=\varepsilon_0(n,A)>0$ so that the last operator
norm is at most $1/2$ for both exponents.
Since $2^*>2$ and $\nabla\eta$ is bounded with compact support,
\[
\nabla\eta\cdot\varphi\in L^{2n/(n+1)}\cap L^2,
\qquad
\|\nabla\eta\cdot\varphi\|_{L^2}\leq C\|\varphi\|_{L^{2^*}(B_2)}.
\]
Consequently,
\[
\Gamma*(\nabla\eta\cdot\varphi)\in L^{2^*}\cap L^{q_0},
\qquad
\|\Gamma*(\nabla\eta\cdot\varphi)\|_{L^{q_0}}\leq C\|\varphi\|_{L^{2^*}(B_2)}.
\]
Thus the affine map
\[
v\longmapsto
\Gamma*(\widetilde H|\varphi|^{2^*-2}v)
+\Gamma*(\nabla\eta\cdot\varphi)
\]
is a contraction on the Banach space $L^{2^*}\cap L^{q_0}$,
equipped with the sum of its two norms.
Its fixed point is also a fixed point in $L^{2^*}$, where
\eqref{eq:eps-contraction-estimate} gives uniqueness.
By \eqref{eq:eps-integral-equation}, this fixed point is $\eta\varphi$.
In particular, $\eta\varphi\in L^{q_0}$; estimating its fixed-point
equation in $L^{q_0}$ now gives
\begin{equation}
\label{eq:eps-local-integrability-gain}
\|\varphi\|_{L^{q_0}(B_{7/4})}\leq C\|\varphi\|_{L^{2^*}(B_2)}.
\end{equation}
We next use interior elliptic estimates and Sobolev
embedding to obtain an $L^\infty$ bound.
If $\|\varphi\|_{L^q(B_R)}\leq C_q \|\varphi\|_{L^{2^*}(B_2)}$, then the equation gives
\[
\|D_{g_{\R^n}}\varphi\|_{L^{q/(2^*-1)}(B_R)}
\leq A\|\varphi\|_{L^q(B_R)}^{2^*-1}
\leq C \|\varphi\|_{L^{2^*}(B_2)},
\]
because $\|\varphi\|_{L^{2^*}(B_2)}\leq1$. For the exponents used below, $q/(2^*-1)>1$;
also $\|\varphi\|_{L^{q/(2^*-1)}(B_R)}\leq C\|\varphi\|_{L^{2^*}(B_2)}$ by finite-measure inclusion.
Interior elliptic regularity therefore gives
$\|\varphi\|_{W^{1,q/(2^*-1)}(B_r)}\leq C\|\varphi\|_{L^{2^*}(B_2)}$ for $r<R$.
Whenever $q/(2^*-1)<n$, Sobolev embedding yields a new exponent $q'$ with
\(
\frac1{q'}=\frac {2^*-1}{q} -\frac1n.
\)
Starting at $q_0=\frac{2n}{n-2}$, these exponents increase. As long as this
recursion is used, they satisfy
\[
\frac1{q_k}=\frac{n-1}{2n}-\frac{(2^*-1)^k}{2n}.
\]
Hence after finitely many steps, depending only on $n$, one reaches
$q_k/(2^*-1)\geq n$. If equality occurs, the interior $W^{1,n}$ estimate
and the embedding $W^{1,n}\hookrightarrow L^q$ for every finite $q$
give an exponent strictly larger than $n(2^*-1)$ on a smaller ball.
Taking a fixed finite sequence of concentric balls between
$B_{7/4}$ and $B_{3/2}$, we thus obtain some $q>n(2^*-1)$ such that
\begin{equation}
\label{eq:eps-high-integrability}
\|\varphi\|_{L^q(B_{3/2})}\leq C\|\varphi\|_{L^{2^*}(B_2)}.
\end{equation}
All constants and exponents chosen here depend only on $n$ and $A$;
no further smallness condition is imposed.
The interior $W^{1,q/(2^*-1)}$ estimate on $B_{5/4}$ and Morrey's embedding
now imply
\begin{equation}
\label{eq:eps-supremum-estimate}
\|\varphi\|_{L^\infty(B_{5/4})}\leq C\|\varphi\|_{L^{2^*}(B_2)}.
\end{equation}
For every $1<q<\infty$, \eqref{eq:eps-supremum-estimate} gives
\[
\|\varphi\|_{L^q(B_{5/4})}
+\|\widetilde H|\varphi|^{2^*-2}\varphi\|_{L^q(B_{5/4})}
\leq C_q(\|\varphi\|_{L^{2^*}(B_2)}+\|\varphi\|_{L^{2^*}(B_2)}^{2^*-1})\leq C_q \|\varphi\|_{L^{2^*}(B_2)}.
\]
The interior Dirac estimate therefore yields
\[
\|\varphi\|_{W^{1,q}(B_1)}\leq C_q \|\varphi\|_{L^{2^*}(B_2)}.
\]Moreover, Morrey's embedding also gives
$\varphi\in C^{0,\gamma}(B_1)$ for every $0<\gamma<1$.
\end{proof}
By scaling, Lemma~\ref{lem:small-energy-regularity-Dirac}  yields the following
estimate on arbitrary balls.
\begin{cor}
\label{cor:epsilon-regularity-scale}
Let \(B_{2r}(y_0)\subset \mathbb R^n\), and let $
\varphi\in L^{2^*}(B_{2r}(y_0),\Sigma\mathbb R^n)$ 
be a distributional solution of
\[
D_{g_{\R^n}}\varphi
=
\widetilde H(y)|\varphi|^{2^*-2}\varphi
\qquad\text{in }B_{2r}(y_0), 
\qquad
\|\widetilde H\|_{L^\infty(B_{2r}(y_0))}\leq A.
\]
There exists $\varepsilon_0=\varepsilon_0(n,A)>0$
such that, if
\[
\int_{B_{2r}(y_0)}|\varphi|^{2^*}\,dy\leq\varepsilon_0,
\] then
\[
\sup_{B_r(y_0)}|\varphi|
\leq
C r^{-\frac{n-1}{2}}
\left(
\int_{B_{2r}(y_0)}|\varphi|^{2^*}\,dx
\right)^{1/2^*},
\]where $C=C(n,A)$.
\end{cor}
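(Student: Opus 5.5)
The plan is a direct rescaling argument that reduces the statement to Lemma~\ref{lem:small-energy-regularity-Dirac} on the unit configuration $B_1\subset B_2$. Given $\varphi$ on $B_{2r}(y_0)$, define
\[
\varphi_r(z):=r^{\frac{n-1}{2}}\varphi(y_0+rz),\qquad \widetilde H_r(z):=\widetilde H(y_0+rz),\qquad z\in B_2 .
\]

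The first step is to check that $\varphi_r$ solves an equation of the same form. Since $D_{g_{\R^n}}$ is first order with constant coefficients, the chain rule gives $D_{g_{\R^n}}\varphi_r(z)=r^{\frac{n+1}{2}}(D_{g_{\R^n}}\varphi)(y_0+rz)$. Using $\frac{n-1}{2}(2^*-1)=\frac{n+1}{2}$, this equals $r^{\frac{n+1}{2}}\widetilde H(y_0+rz)|\varphi(y_0+rz)|^{2^*-2}\varphi(y_0+rz)$, which is exactly $\widetilde H_r|\varphi_r|^{2^*-2}\varphi_r$. The identity holds in the distributional sense because the change of variables commutes with testing against compactly supported spinors. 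Moreover, $\|\widetilde H_r\|_{L^\infty(B_2)}\le A$.

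The second step is to verify scale invariance of the critical integral. Since $\frac{n-1}{2}\cdot 2^*=n$, the substitution $y=y_0+rz$ gives
\[
\int_{B_2}|\varphi_r|^{2^*}\,dz=\int_{B_{2r}(y_0)}|\varphi|^{2^*}\,dy\le\varepsilon_0 .
\]
Thus $\varphi_r\in L^{2^*}(B_2)$, and the hypothesis of Lemma~\ref{lem:small-energy-regularity-Dirac} holds with the same $\varepsilon_0(n,A)$.

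The final step applies the lemma, which gives $\sup_{B_1}|\varphi_r|\le C(n,A)\|\varphi_r\|_{L^{2^*}(B_2)}$. Undoing the scaling, $\sup_{B_1}|\varphi_r|=r^{\frac{n-1}{2}}\sup_{B_r(y_0)}|\varphi|$. Combined with the integral identity above, this yields the claimed bound with the factor $r^{-\frac{n-1}{2}}$.

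There is no real obstacle. The only points requiring care are the exponent bookkeeping, namely $\frac{n-1}{2}(2^*-1)=\frac{n+1}{2}$ and $\frac{n-1}{2}2^*=n$, and the distributional form of the rescaled equation; both are routine.
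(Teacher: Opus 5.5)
Your proposal is correct and matches the paper's proof: the paper uses the same rescaling $\varphi_r(z)=r^{\frac{n-1}{2}}\varphi(y_0+rz)$, the same invariance of the equation and of the $L^{2^*}$ norm, and then applies Lemma~\ref{lem:small-energy-regularity-Dirac} together with $\|\varphi_r\|_{L^\infty(B_1)}=r^{\frac{n-1}{2}}\|\varphi\|_{L^\infty(B_r(y_0))}$. Your exponent bookkeeping is slightly more explicit than the paper's, but the argument is the same.
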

\begin{proof}
Set
\(
\varphi_r(z)
=
r^{\frac{n-1}{2}}\varphi(y_0+rz)\),
\(z\in B_2.
\)
Then
\[
D_{g_{{\mathbb R^n}}}\varphi_r
=
\widetilde H(y_0+rz)
|\varphi_r|^{2^*-2}\varphi_r,
\qquad
\|\varphi_r\|_{L^{2^*}(B_2)}
=
\|\varphi\|_{L^{2^*}(B_{2r}(y_0))}.
\]
The conclusion follows from Lemma~\ref{lem:small-energy-regularity-Dirac}  and
\(
\|\varphi_r\|_{L^\infty(B_1)}
=
r^{\frac{n-1}{2}}
\|\varphi\|_{L^\infty(B_r(y_0))}.
\)
\end{proof}
Conformal covariance gives the corresponding
local estimates on $\mathbb S^n$.
\begin{lem}\label{lem:small-energy-regularity-sphere}
Let $n\geq3$, $p\in\mathbb S^n$, and
$0<\rho<\rho_0$, where $\rho_0\in(0,1)$ is fixed.
Let
$\psi\in L^{2^*}(B_{2\rho}^{g_0}(p),\Sigma\mathbb S^n)$
be a distributional solution of
\[
D_{g_0}\psi
=
H(x)|\psi|^{2^*-2}\psi
\quad\text{in }B_{2\rho}^{g_0}(p),
\qquad
\|H\|_{L^\infty(B_{2\rho}^{g_0}(p))}\leq A.
\]
Let $\varepsilon_0$ be as in Lemma~\ref{lem:small-energy-regularity-Dirac}.
If
\[
\int_{B_{2\rho}^{g_0}(p)}
|\psi|^{2^*}\,dv_{g_0}\leq\varepsilon_0,
\]
then, for every $1<q<\infty$,
\[
\|\psi\|_{L^\infty(B_\rho^{g_0}(p))}
\leq
C\rho^{-\frac{n-1}{2}}
\|\psi\|_{L^{2^*}(B_{2\rho}^{g_0}(p))},\qquad\|\psi\|_{W^{1,q}(B_\rho^{g_0}(p))}
\leq
C_q\rho^{-1-\frac{n-1}{2}+\frac nq}
\|\psi\|_{L^{2^*}(B_{2\rho}^{g_0}(p))},
\]
where $C=C(n,A)$ and $C_q=C(n,A,q)$.
\end{lem}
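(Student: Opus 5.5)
We reduce the statement to Lemma~\ref{lem:small-energy-regularity-Dirac} and Corollary~\ref{cor:epsilon-regularity-scale} by passing to stereographic coordinates centred at $p$. Write $\varphi(y)=f(y)^{\frac{n-1}{2}}F(\psi\circ\pi_p^{-1}(y))$ on $U_{2\rho}=B_{\tan\rho}(0)$. By \eqref{eq:euclidean-dirac}, $\varphi$ solves $D_{g_{\R^n}}\varphi=\widetilde H|\varphi|^{2^*-2}\varphi$ with $\|\widetilde H\|_\infty\le A$. By \eqref{eq:conformal-critical-integral}, $\int_{U_{2\rho}}|\varphi|^{2^*}dy=\int_{B^{g_0}_{2\rho}(p)}|\psi|^{2^*}dv_{g_0}\le\varepsilon_0$. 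The distributional equation transfers because the conformal identity \eqref{eq:conformal-dirac} holds weakly: test against $f^{-\frac{n-1}{2}}$-weighted images of test spinors, using formal self-adjointness. Since $\rho<\rho_0<1$, we have $\tan(\rho/2)\ge\rho/2$ and $\tan\rho\le c\rho$ for a constant $c$ depending only on $\rho_0$. On $U_{2\rho}$ the conformal factor satisfies $1\le f\le 2$, and all its derivatives are bounded.

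For the $L^\infty$ estimate, set $r=\tan(\rho/2)$. The concern is that $B_{2r}(0)$ may not sit inside $U_{2\rho}$. Since $\tan$ is convex on $[0,\pi/2)$, we have $2\tan(\rho/2)\le\tan\rho$, so it does. Corollary~\ref{cor:epsilon-regularity-scale}, applied with $y_0=0$, gives
\[
\sup_{U_\rho}|\varphi|\le C r^{-\frac{n-1}{2}}\|\varphi\|_{L^{2^*}(U_{2\rho})}.
\]
Because $F$ is a fiberwise isometry, $|\psi|=f^{-\frac{n-1}{2}}|\varphi|\le|\varphi|$. Combined with $r\ge\rho/2$, this yields the first inequality.

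For the $W^{1,q}$ estimate, use the rescaling $\varphi_r(z)=r^{\frac{n-1}{2}}\varphi(rz)$ on $B_2$. Lemma~\ref{lem:small-energy-regularity-Dirac} gives $\|\varphi_r\|_{W^{1,q}(B_1)}\le C_q\|\varphi\|_{L^{2^*}(U_{2\rho})}$. Undo the scaling: $\|\nabla\varphi\|_{L^q(B_r)}=r^{-1-\frac{n-1}{2}+\frac nq}\|\nabla\varphi_r\|_{L^q(B_1)}$, and $\|\varphi\|_{L^q(B_r)}=r^{-\frac{n-1}{2}+\frac nq}\|\varphi_r\|_{L^q(B_1)}$. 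Since $r\le 1$, the second factor is at most $r^{-1-\frac{n-1}{2}+\frac nq}$ times the same norm. It remains to pass back to $\psi=f^{-\frac{n-1}{2}}F^{-1}(\varphi\circ\pi_p)$ on $B^{g_0}_\rho(p)$. In the fixed chart, $dv_{g_0}=f^n dy$ with $f$ comparable to $1$. The spinor connection of $f^2g_{\R^n}$ differs from the Euclidean one transported by $F$ only by a zeroth-order term of the form $\nabla\log f$ times Clifford multiplication, and $f^{-\frac{n-1}{2}}$ has bounded derivatives. Hence $|\nabla^{g_0}\psi|\le C(|\nabla\varphi|+|\varphi|)$ pointwise. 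The extra $|\varphi|$ contribution is again bounded by $r^{-\frac{n-1}{2}+\frac nq}$ times the $L^{2^*}$ norm, which is dominated by the stated power. This gives the second inequality, with constants depending on $n,A,q$; the fixed $\rho_0$ enters only through the harmless chart comparisons.

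The main obstacle is this last bookkeeping step. One has to check that the identification $F$ commutes with covariant derivatives up to bounded lower-order terms, uniformly on $U_{2\rho_0}$. Everything else is a direct consequence of the earlier lemma, conformal invariance of the critical integral, and the ball inclusions above.
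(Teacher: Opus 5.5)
Your proposal is correct and follows essentially the paper's route. You pass to stereographic coordinates and apply Corollary~\ref{cor:epsilon-regularity-scale} together with the rescaled $W^{1,q}$ bound of Lemma~\ref{lem:small-energy-regularity-Dirac} on a ball of radius comparable to $\rho$; your $r=\tan(\rho/2)$ versus the paper's $(\tan\rho)/2$ rests on the same inclusion $2\tan(\rho/2)\le\tan\rho$. You then return to the sphere using uniform bounds on $f^{\pm1}$ and its derivatives.

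One small slip: $f\ge1$ holds only where $|y|\le1$. It is true on $U_\rho$, which is where you actually use it, but it can fail on $U_{2\rho}$ when $\rho>\pi/4$. There you only need two-sided bounds on $f$, and these hold uniformly because $\rho_0<1$.
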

\begin{proof}
Let $\varphi$ be associated with $\psi$ by the
conformal transformation in Section~\ref{Conformal covariance and stereographic projection}. Then $\varphi$ satisfies \eqref{eq:euclidean-dirac}
in $B_{\tan\rho}$
with coefficient $\widetilde H=H\circ\pi_p^{-1}$, and
\[
\|\widetilde H\|_{L^\infty(B_{\tan\rho})}\leq A,
\qquad
\|\varphi\|_{L^{2^*}(B_{\tan\rho})}
=
\|\psi\|_{L^{2^*}(B_{2\rho}^{g_0}(p))}.
\]
Since
\[
\pi_p(B_\rho^{g_0}(p))
=
B_{\tan(\rho/2)}
\subset B_{(\tan\rho)/2},
\qquad
\rho\leq\tan\rho\leq C\rho,
\]
Corollary~\ref{cor:epsilon-regularity-scale} and the uniform bounds on $f$ and
$f^{-1}$ give the $L^\infty$ estimate.

Moreover, $f$ and $f^{-1}$ have uniformly bounded
$C^1$ norms on these balls.
The conformal identification and the rescaled
$W^{1,q}$ estimate in Lemma~\ref{lem:small-energy-regularity-Dirac}  therefore yield
\[
\begin{aligned}
\|\psi\|_{W^{1,q}(B_\rho^{g_0}(p))}
&\leq
C_q\|\varphi\|_{W^{1,q}(B_{\tan(\rho/2)})}\\
&\leq
C_q\rho^{-1-\frac{n-1}{2}+\frac nq}
\|\varphi\|_{L^{2^*}(B_{\tan\rho})}.
\end{aligned}
\]
This proves the second estimate.
\end{proof}
\subsection{Concentration set}

Throughout this subsection, let $\{\psi_i\}$ be a sequence
of distributional solutions of \eqref{eq:intro-sequence} and 
satisfying \eqref{eq:intro-assumptions}.
After passing to a subsequence,
\[
\mu_i:=|\psi_i|^{2^*}\,dv_{g_0}
\rightharpoonup\mu,
\qquad
\mu(\mathbb S^n)\leq\Lambda,
\]
weakly as finite Radon measures.
Let $\varepsilon_0$ be as in Lemma~\ref{lem:small-energy-regularity-sphere} and define
\[
S:=\{x\in\mathbb S^n:\mu(\{x\})\geq\varepsilon_0\}.
\]Since each point of $S$ carries mass at least
$\varepsilon_0$ and $\mu(\mathbb S^n)\leq\Lambda$,
$S$ contains at most $\Lambda/\varepsilon_0$ points.
The following proposition gives compactness in the absence
of concentration.
\begin{prop}\label{S converhence}
For every open set $\Omega$ with
$\overline{\Omega}\subset\mathbb S^n\setminus S$
and every $1<q<\infty$, there exists
$C_{\Omega,q}>0$ such that
\[
\sup_i\|\psi_i\|_{W^{1,q}(\Omega)}
\leq C_{\Omega,q}.
\]
Consequently, after passing to a subsequence,
\[
\psi_i\longrightarrow\psi_\infty
\quad\text{in }
C_{\mathrm{loc}}^{0,\gamma}(\mathbb S^n\setminus S)
\quad\text{for every }0<\gamma<1.
\]
In particular, if $S=\varnothing$, then
\[
\sup_i\|\psi_i\|_{W^{1,q}(\mathbb S^n)}
\leq C_q,
\qquad
\psi_i\longrightarrow\psi_\infty
\quad\text{in }C^{0,\gamma}(\mathbb S^n),
\]
for every $1<q<\infty$ and $0<\gamma<1$.
\end{prop}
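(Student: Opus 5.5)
The plan is a covering argument built on Lemma~\ref{lem:small-energy-regularity-sphere}, followed by compact embeddings. Fix an open $\Omega$ with $\overline{\Omega}\subset\mathbb S^n\setminus S$, and fix $A:=\sup_i\|H_i\|_{L^\infty}$, which is finite because $H_i\to H$ in $C^0$. Let $\varepsilon_0=\varepsilon_0(n,A)$ be as in that lemma.

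The first step is to find, for each $x\in\overline{\Omega}$, a radius $\rho_x\in(0,\rho_0)$ and an index $i_x$ such that
\[
\int_{B^{g_0}_{2\rho_x}(x)}|\psi_i|^{2^*}\,dv_{g_0}\le\varepsilon_0\qquad\text{for } i\ge i_x .
\]
Since $x\notin S$, we have $\mu(\{x\})<\varepsilon_0$. Outer regularity of $\mu$ then gives a closed ball $\overline{B}^{g_0}_{3\rho_x}(x)$ with $\mu(\overline{B}_{3\rho_x}(x))<\varepsilon_0$. Weak convergence of Radon measures gives $\limsup_i\mu_i(K)\le\mu(K)$ for compact $K$, so
\[
\limsup_i\mu_i\bigl(B_{2\rho_x}(x)\bigr)\le\mu\bigl(\overline{B}_{3\rho_x}(x)\bigr)<\varepsilon_0 .
\]
This yields $i_x$.

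Next, cover $\overline{\Omega}$ by finitely many balls $B_{\rho_{x_k}}(x_k)$, $k=1,\dots,N$, and set $i_0=\max_k i_{x_k}$. For $i\ge i_0$, Lemma~\ref{lem:small-energy-regularity-sphere} on each ball gives
\[
\|\psi_i\|_{W^{1,q}(B_{\rho_{x_k}}(x_k))}\le C_q\rho_{x_k}^{-1-\frac{n-1}{2}+\frac nq}\Lambda^{1/2^*}.
\]
Summing over $k$ bounds $\|\psi_i\|_{W^{1,q}(\Omega)}$ uniformly for $i\ge i_0$. The finitely many indices $i<i_0$ are handled individually. Each such $\psi_i$ is a weak $H^{1/2}$ solution with bounded $H_i$, so it lies in $W^{1,q}_{\mathrm{loc}}$ for all $q$, and its own norm on $\Omega$ is finite. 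One route is to apply the lemma on balls small enough that that single solution has energy at most $\varepsilon_0$, using absolute continuity of its $L^{2^*}$ integral; alternatively, one can cite the regularity of Isobe~\cite{Isobe2011}. Taking the maximum over these finitely many indices gives the constant $C_{\Omega,q}$.

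For convergence, take $q>n$. Morrey's embedding $W^{1,q}\hookrightarrow C^{0,1-n/q}$ is compact into $C^{0,\gamma}$ for every $\gamma<1-n/q$, and $q$ may be taken arbitrarily large. Exhaust $\mathbb S^n\setminus S$ (open, since $S$ is finite) by sets $\Omega_m$ with $\overline{\Omega_m}\subset\Omega_{m+1}$. Apply Arzel\`a--Ascoli on each $\Omega_m$ and extract a diagonal subsequence to get a limit $\psi_\infty$ in $C^{0,\gamma}_{\mathrm{loc}}$ for all $\gamma<1$. The case $S=\varnothing$ is the same argument with $\Omega=\mathbb S^n$.

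The only delicate step is the measure-theoretic passage from $\mu(\{x\})<\varepsilon_0$ to uniform smallness of $\mu_i$ on a fixed ball. This is handled by shrinking to a closed ball and using upper semicontinuity on compact sets, as in the first step; everything else is routine covering and compactness.
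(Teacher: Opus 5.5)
Your proposal is correct and follows the paper's proof. The ingredients are the same: small-energy regularity (Lemma~\ref{lem:small-energy-regularity-sphere}) on balls where the limit measure carries mass below $\varepsilon_0$, a finite covering of $\overline{\Omega}$, separate treatment of the finitely many early indices, and a compact Morrey embedding with a diagonal argument. The only difference is cosmetic: the paper chooses radii with $\mu(\partial B_{\rho_x}^{g_0}(x))=0$ so that $\mu_i(B_{\rho_x}^{g_0}(x))\to\mu(B_{\rho_x}^{g_0}(x))$, whereas you use the equivalent portmanteau bound $\limsup_i\mu_i(K)\le\mu(K)$ on a closed ball $K$.
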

\begin{proof}
For each $x\in\mathbb S^n\setminus S$, choose
$0<\rho_x<\rho_0$ sufficiently small that
\[
\mu(B_{\rho_x}^{g_0}(x))<\varepsilon_0,
\qquad
\mu(\partial B_{\rho_x}^{g_0}(x))=0.
\]
Since $B_{\rho_x}^{g_0}(x)$ is a $\mu$-continuity set, 
weak convergence of measures gives
\[
\int_{B_{\rho_x}^{g_0}(x)}
|\psi_i|^{2^*}\,dv_{g_0}
=
\mu_i(B_{\rho_x}^{g_0}(x))
\longrightarrow
\mu(B_{\rho_x}^{g_0}(x))
<\varepsilon_0.
\]
Lemma~\ref{lem:small-energy-regularity-sphere}  therefore gives
\[
\|\psi_i\|_{W^{1,q}(B_{\rho_x/2}^{g_0}(x))}
\leq C_{x,q},
\qquad 1<q<\infty,
\]
for all sufficiently large $i$.

A finite covering of $\overline\Omega$ by these smaller
balls yields the claimed bounds for sufficiently large $i$.
The finitely many remaining terms are absorbed into
$C_{\Omega,q}$ by the regularity of each fixed solution.

For each $0<\gamma<1$, choose $q>n/(1-\gamma)$.
On every smooth domain $\Omega$ with
$\overline{\Omega}\subset\mathbb S^n\setminus S$,
the embedding
\(
W^{1,q}(\Omega)
{\hookrightarrow}
C^{0,\gamma}(\overline{\Omega})
\)
is compact.
The local $W^{1,q}$ bounds and a diagonal argument
therefore yield, after passing to a subsequence,
\[
\psi_i\longrightarrow\psi_\infty
\quad\text{in }
C_{\mathrm{loc}}^{0,\gamma}(\mathbb S^n\setminus S)
\quad\text{for every }0<\gamma<1.
\]
When $S=\varnothing$, the same finite covering argument
and compact embedding give the global bounds
and convergence on $\mathbb S^n$.
\end{proof}

\begin{lem}\label{lem:concentration-blowup-equivalence}
A point $p\in\mathbb S^n$ belongs to $S$ if and only if
it is a blow-up point of $\{\psi_i\}$.
\end{lem}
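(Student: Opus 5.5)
We prove the two implications separately. Both rest on Proposition~\ref{S converhence} and on the scale-invariant $\varepsilon$-regularity estimate, Lemma~\ref{lem:small-energy-regularity-sphere}. Throughout we work with the fixed subsequence along which $\mu_i\rightharpoonup\mu$ and with respect to which $S$ is defined.

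\emph{Points outside $S$ are not blow-up points.} Let $p\notin S$. Then $\mu(\{p\})<\varepsilon_0$. Since $S$ is finite, we can choose $\rho>0$ so small that $\overline{B_{\rho}^{g_0}(p)}\cap S=\varnothing$. Applying Proposition~\ref{S converhence} with $\Omega=B_\rho^{g_0}(p)$ and some $q>n$ gives $\sup_i\|\psi_i\|_{W^{1,q}(\Omega)}<\infty$. Morrey's embedding then gives $\sup_i\|\psi_i\|_{L^\infty(B_{\rho/2}^{g_0}(p))}<\infty$. Any sequence $x_i\to p$ eventually lies in $B_{\rho/2}^{g_0}(p)$, so $|\psi_i(x_i)|$ stays bounded and $p$ is not a blow-up point.

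\emph{Points of $S$ are blow-up points.} Let $p\in S$ and argue by contradiction. Suppose that for some $r>0$ we had $\sup_i\|\psi_i\|_{L^\infty(B_r^{g_0}(p))}\le M<\infty$ along a further subsequence. Then for every $0<\rho<r$,
\[
\mu_i\bigl(B_\rho^{g_0}(p)\bigr)\le M^{2^*}\,|B_\rho^{g_0}(p)|\le CM^{2^*}\rho^n .
\]
Choose $\rho$ with $\mu(\partial B^{g_0}_\rho(p))=0$; all but countably many $\rho$ qualify. Weak convergence then gives $\mu(\{p\})\le\mu(B_\rho^{g_0}(p))\le CM^{2^*}\rho^n$. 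Letting $\rho\to0$ yields $\mu(\{p\})=0<\varepsilon_0$, a contradiction. Hence for every $r>0$ we have $\sup_{B_r^{g_0}(p)}|\psi_i|\to\infty$ along the subsequence. Letting $r=1/k$ and choosing a diagonal sequence of indices, we obtain points $x_i\to p$ with $|\psi_i(x_i)|\to\infty$.

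\emph{The main subtlety} is bookkeeping about subsequences rather than analysis. Definition~\ref{def:intro-blowup} should be read along the subsequence already fixed. The second step shows more than is needed: for every $r>0$, $\limsup_i\sup_{B_r^{g_0}(p)}|\psi_i|=\infty$, and in fact the divergence holds along the full fixed subsequence. Indeed, boundedness along any sub-subsequence would give the same contradiction, because weak convergence passes to sub-subsequences.

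It remains to check that a single sequence $x_i\to p$ works for all large $i$, not merely for a sparse set of indices. For each $k$ there is $N_k$ such that $\sup_{B^{g_0}_{1/k}(p)}|\psi_i|\ge k$ for $i\ge N_k$. We take the $N_k$ increasing and, for $N_k\le i<N_{k+1}$, choose $x_i\in B_{1/k}^{g_0}(p)$ with $|\psi_i(x_i)|\ge k-1$. The solutions are continuous by Lemma~\ref{lem:small-energy-regularity-Dirac} applied on small balls, so the supremum is essentially attained and such points $x_i$ exist. This sequence satisfies $x_i\to p$ and $|\psi_i(x_i)|\to\infty$.

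The first step also has to be consistent with this reading: the uniform $L^\infty$ bound near $p\notin S$ holds for all large $i$ along the subsequence, so no sequence $x_i\to p$ can have $|\psi_i(x_i)|\to\infty$.
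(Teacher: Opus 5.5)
Your proof is correct and follows the paper's argument. Away from $S$, Proposition~\ref{S converhence} gives a uniform local $L^\infty$ bound. Near a point $p\in S$, a uniform bound would force $\mu(B_\rho^{g_0}(p))\le C\rho^n$ and hence $\mu(\{p\})=0$. Your explicit construction of the sequence $x_i$ simply spells out the one-line ``diagonal argument'' the paper invokes.
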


\begin{proof}
If $p\notin S$, Proposition~\ref{S converhence} gives a uniform
$L^\infty$ bound for $\psi_i$ in a neighborhood of $p$.
Hence $p$ is not a blow-up point.

Conversely, let $p\in S$ and suppose that $p$ is not
a blow-up point.
By a diagonal argument, after passing to a subsequence,
there exist a neighborhood $U$ of $p$ and $C>0$ such that
\[
\sup_U|\psi_i|\leq C.
\]
For every sufficiently small $r>0$ satisfying
$\mu(\partial B_r^{g_0}(p))=0$, weak convergence gives
\[
\mu(B_r^{g_0}(p))
=
\lim_{i\to\infty}
\int_{B_r^{g_0}(p)}|\psi_i|^{2^*}\,dv_{g_0}
\leq
C^{2^*}\operatorname{Vol}_{g_0}(B_r^{g_0}(p)).
\]
Letting $r\to0$ through such radii yields
$\mu(\{p\})=0$, contradicting $p\in S$.
Thus $p$ is a blow-up point.
\end{proof}
\subsection{Blow-up rescaling}\label{Blow-up rescaling near a concentration point}
Assume that $S\neq\varnothing$ and fix $p\in S$.
Since $S$ is finite, choose $\rho>0$ sufficiently small
that
\(
B_{2\rho}^{g_0}(p)\cap S=\{p\}.
\)
Let $\varphi_i$ and $\widetilde H_i$ be obtained by
the conformal transformation in Section~\ref{Conformal covariance and stereographic projection}, and write
\(
U_\rho=\pi_p(B_\rho^{g_0}(p))
=B_{\tan(\rho/2)}(0).
\)
Choose $y_i\in\overline{U_\rho}$ such that
\[
|\varphi_i(y_i)|
=
\max_{\overline{U_\rho}}|\varphi_i|.
\]
By Lemma~\ref{lem:concentration-blowup-equivalence},
$|\varphi_i(y_i)|\to\infty$.
On the other hand, Proposition~\ref{S converhence} gives
\[
\sup_{\overline{U_\rho}\setminus B_\varepsilon(0)}
|\varphi_i|
\leq C_\varepsilon
\]
for every sufficiently small $\varepsilon>0$.
Hence $y_i\to0$, and $y_i\in U_\rho$ for all
sufficiently large $i$.

Set
\[
\lambda_i:=|\varphi_i(y_i)|^{-\frac{2}{n-1}}
\longrightarrow0,
\qquad
\Phi_i(z):=
\lambda_i^{\frac{n-1}{2}}
\varphi_i(y_i+\lambda_i z), 
\]
for $y_i+\lambda_i z\in U_\rho$.
\begin{lem}
\label{lem:rescaled-limit}
Assume that $H_i\to H$ in $C^0(\mathbb S^n)$.
After passing to a subsequence,
\[
\Phi_i\longrightarrow\Phi
\quad\text{in }C_{\mathrm{loc}}^{0,\gamma}(\mathbb R^n)
\quad\text{for every }0<\gamma<1,
\]
where $\Phi$ satisfies
\[
D_{g_{\mathbb R^n}}\Phi
=
H(p)|\Phi|^{2^*-2}\Phi
\quad\text{in }\mathbb R^n
\]
in the distributional sense, and
\[
|\Phi(z)|\leq|\Phi(0)|=1
\quad(z\in\mathbb R^n),
\qquad
\int_{\mathbb R^n}|\Phi|^{2^*}\,dz\leq\Lambda.
\]
\end{lem}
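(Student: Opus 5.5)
The plan is the standard blow-up rescaling argument, with the local $\varepsilon$-regularity (Lemma~\ref{lem:small-energy-regularity-Dirac}) replacing the Harnack inequality used in the scalar case.

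\textbf{Rescaled equation, domain and bounds.} The scaling is the same as in the proof of Corollary~\ref{cor:epsilon-regularity-scale}, so $\Phi_i$ solves
\[
D_{g_{\mathbb R^n}}\Phi_i=\widetilde H_i(y_i+\lambda_i z)|\Phi_i|^{2^*-2}\Phi_i
\]
on $\Omega_i:=\lambda_i^{-1}(U_\rho-y_i)$. Since $y_i\to0$, $y_i\in U_\rho$ and $\lambda_i\to0$, the sets $\Omega_i$ exhaust $\mathbb R^n$: every ball $B_R$ lies in $\Omega_i$ for $i$ large. By the choice of $y_i$ as a maximum point of $|\varphi_i|$ on $\overline{U_\rho}$, we get $|\Phi_i|\le|\Phi_i(0)|=1$ on $\Omega_i$. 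By \eqref{eq:conformal-critical-integral} and scale invariance, $\int_{\Omega_i}|\Phi_i|^{2^*}\,dz\le\Lambda$. The coefficients $\widetilde H_i(y_i+\lambda_i\cdot)$ are bounded by $A:=\sup_i\|H_i\|_{C^0}$. Moreover they converge uniformly on compact sets to the constant $H(p)$, because $H_i\to H$ in $C^0$, $H$ is uniformly continuous, and $\pi_p^{-1}(y_i+\lambda_i z)\to p$ uniformly for $|z|\le R$.

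\textbf{Uniform $W^{1,q}$ bounds and compactness.} Since $|\Phi_i|\le1$ on $B_{2R}$, the right-hand side of the equation is bounded in $L^\infty(B_{2R})$ by $A$. The interior Dirac estimate from Section~\ref{Preliminaries} then gives
\[
\|\Phi_i\|_{W^{1,q}(B_R)}\le C(R,q,A)
\]
for every $1<q<\infty$. Notably, no smallness of energy is needed here, because the $L^\infty$ bound is built into the normalization. Take $q>n/(1-\gamma)$. The compact embedding $W^{1,q}(B_R)\hookrightarrow C^{0,\gamma}(\overline{B_R})$, combined with a diagonal argument over $R\to\infty$ and over a countable set of $\gamma$'s tending to $1$, gives a subsequence with $\Phi_i\to\Phi$ in $C^{0,\gamma}_{\mathrm{loc}}(\mathbb R^n)$ for every $\gamma<1$. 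Convergence for a given $\gamma$ implies convergence for every smaller $\gamma$, so it suffices to pass to the limit along $\gamma_k\uparrow1$.

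\textbf{Passing to the limit.} Fix a test spinor $\xi\in C_c^\infty$. The weak formulation is
\[
\int\langle\Phi_i,D\xi\rangle=\int\widetilde H_i(y_i+\lambda_i z)|\Phi_i|^{2^*-2}\langle\Phi_i,\xi\rangle .
\]
Uniform convergence of $\Phi_i$ and of the coefficients on $\operatorname{supp}\xi$ lets us pass to the limit on both sides, since $s\mapsto|s|^{2^*-2}s$ is continuous. This yields $D\Phi=H(p)|\Phi|^{2^*-2}\Phi$ distributionally. Pointwise convergence gives $|\Phi|\le1$ and $|\Phi(0)|=\lim|\Phi_i(0)|=1$. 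For the energy bound, apply Fatou's lemma, or simply uniform convergence on each $B_R$:
\[
\int_{B_R}|\Phi|^{2^*}=\lim_i\int_{B_R}|\Phi_i|^{2^*}\le\Lambda,
\]
and then let $R\to\infty$.

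The only point requiring some care is the uniform convergence of the rescaled coefficients, which needs only the $C^0$ convergence $H_i\to H$ and the fact that $y_i\to0$. The rest is routine, so I do not expect a genuine obstacle.
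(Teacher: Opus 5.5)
Your proposal is correct and follows essentially the same route as the paper: the normalization $|\Phi_i|\le|\Phi_i(0)|=1$ bounds the right-hand side, interior Dirac estimates give uniform $W^{1,q}_{\mathrm{loc}}$ bounds, compact embedding plus a diagonal argument give $C^{0,\gamma}_{\mathrm{loc}}$ convergence, and the equation, the normalization and the energy bound then pass to the limit. The only inaccuracy is your opening remark that $\varepsilon$-regularity replaces the Harnack inequality, since, as you note yourself, it plays no role in this step.
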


\begin{proof}
Fix $R>0$.
Since $y_i\to0$ and $\lambda_i\to0$, we have
$y_i+\lambda_i B_{2R}\subset U_\rho$
for all sufficiently large $i$.
By the definition of $\Phi_i$,
\[
D_{g_{\mathbb R^n}}\Phi_i
=
\widetilde H_i(y_i+\lambda_i z)
|\Phi_i|^{2^*-2}\Phi_i,
\qquad
|\Phi_i(z)|\leq|\Phi_i(0)|=1
\quad\text{in }B_{2R}.
\]
The uniform bound on $H_i$ and interior elliptic
estimates give
\[
\|\Phi_i\|_{W^{1,q}(B_R)}
\leq C_{R,q},
\qquad 1<q<\infty.
\]
By compact Sobolev embeddings and a diagonal argument,
after passing to a subsequence,
\[
\Phi_i\to\Phi
\quad\text{in }C_{\mathrm{loc}}^{0,\gamma}(\mathbb R^n)
\qquad\text{for every }0<\gamma<1.
\]
In particular, $|\Phi(z)|\le|\Phi(0)|=1$.

Since $H_i\to H$ in $C^0(\mathbb S^n)$,
\(
\widetilde H_i(y_i+\lambda_i z)
\longrightarrow H(p)\) in
\(C^0(\overline{B_R}).
\)
Passing to the limit in the distributional formulation
gives the limiting equation.

Finally, scaling and conformal invariance of the
critical energy give
\[
\begin{aligned}
\int_{B_R}|\Phi_i|^{2^*}\,dz
&=
\int_{y_i+\lambda_i B_R}|\varphi_i|^{2^*}\,dy
\leq
\int_{U_\rho}|\varphi_i|^{2^*}\,dy\\
&=
\int_{B_\rho^{g_0}(p)}
|\psi_i|^{2^*}\,dv_{g_0}
\leq\Lambda.
\end{aligned}
\]
Passing to the limit as $i\to\infty$ and then
letting $R\to\infty$ proves the energy bound.
\end{proof}
\begin{prop}
\label{prop:compactness-alternative}
Let $\{\psi_i\}$ be a sequence
of distributional solutions of \eqref{eq:intro-sequence} and 
satisfying \eqref{eq:intro-assumptions}. After passing to a subsequence, exactly one of the
following alternatives holds:
\begin{enumerate}
\item
$S=\varnothing$, and
\(
\psi_i\longrightarrow\psi_\infty\) in \(
C^{0,\gamma}(\mathbb S^n)
\) for every $0<\gamma<1$.

\item
$S\neq\varnothing$ contains at most
$\frac{\Lambda}{\varepsilon_0}$ points, and
\(
\psi_i\longrightarrow\psi_\infty\) in \(
C_{\mathrm{loc}}^{0,\gamma}(\mathbb S^n\setminus S)
\) for every \(0<\gamma<1\).
Moreover, for each $p\in S$, the rescaled spinors
$\Phi_i$ converge to a nonzero limit $\Phi$
with the properties stated in Lemma~\ref{lem:rescaled-limit}.
\end{enumerate}
\end{prop}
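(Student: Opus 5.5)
The proposition is essentially a synthesis of Proposition~\ref{S converhence}, Lemma~\ref{lem:concentration-blowup-equivalence} and Lemma~\ref{lem:rescaled-limit}. The plan is to pass once to a subsequence along which $\mu_i\rightharpoonup\mu$ weakly as Radon measures. This is possible by the uniform bound $\mu_i(\mathbb S^n)\le\Lambda$ and weak-$*$ compactness. The concentration set $S$ is then defined from $\mu$. The two alternatives are mutually exclusive by definition ($S=\varnothing$ or $S\neq\varnothing$), so it only remains to verify the stated properties in each case.

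If $S=\varnothing$, the last part of Proposition~\ref{S converhence} gives uniform $W^{1,q}(\mathbb S^n)$ bounds for all $1<q<\infty$. After a further subsequence, it also gives $\psi_i\to\psi_\infty$ in $C^{0,\gamma}(\mathbb S^n)$ for every $0<\gamma<1$. To obtain a single subsequence working for all $\gamma$, I will take $\gamma_k\uparrow1$ and extract diagonally. Convergence in $C^{0,\gamma_k}$ implies convergence in $C^{0,\gamma}$ for all $\gamma\le\gamma_k$, so this suffices.

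If $S\neq\varnothing$, the cardinality bound $\#S\le\Lambda/\varepsilon_0$ is the counting argument already made: the points carry disjoint atoms of mass at least $\varepsilon_0$ and the total mass is at most $\mu(\mathbb S^n)\le\Lambda$. The local convergence on $\mathbb S^n\setminus S$ again follows from Proposition~\ref{S converhence}, with a diagonal argument over an exhaustion of $\mathbb S^n\setminus S$ by open sets $\Omega_k$ with $\overline{\Omega_k}\subset\mathbb S^n\setminus S$, and over $\gamma_k\uparrow1$.

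For each $p\in S$, Lemma~\ref{lem:concentration-blowup-equivalence} shows that $p$ is a blow-up point. Hence the construction of Section~\ref{Blow-up rescaling near a concentration point} applies: the maximum points $y_i\to0$, $\lambda_i\to0$, and the rescaled $\Phi_i$ are well defined. Lemma~\ref{lem:rescaled-limit} then produces, after a subsequence, a limit $\Phi$ with the stated equation and bounds. Since $S$ is finite, I will perform these extractions successively for the finitely many points of $S$, keeping each earlier subsequence. The nonvanishing of $\Phi$ follows from $|\Phi(0)|=\lim|\Phi_i(0)|=1$, using the local uniform convergence.

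The only delicate point is bookkeeping. All subsequence extractions must be compatible so that one subsequence serves simultaneously for the measure convergence, the convergence away from $S$, and the rescaled limits at every point of $S$. Because $S$ is determined after the first extraction, later extractions do not change $\mu$, and hence do not change $S$. Finiteness of $S$ then makes the successive extractions legitimate.
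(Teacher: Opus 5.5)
Your proposal is correct and follows the paper's proof, which likewise combines Proposition~\ref{S converhence}, Lemma~\ref{lem:concentration-blowup-equivalence} and Lemma~\ref{lem:rescaled-limit} and uses the finiteness of $S$ to choose a common subsequence. Your extra bookkeeping makes explicit what the paper leaves implicit: $S$ is fixed after the first extraction because sub-subsequences keep the same weak limit $\mu$, and you then extract successively over the finitely many points of $S$.
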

\begin{proof}
Proposition~\ref{S converhence} gives the asserted compactness,
while Lemmas~\ref{lem:concentration-blowup-equivalence} and~\ref{lem:rescaled-limit} give the rescaled limits.
Since $S$ is finite, a common subsequence can be
chosen for all $p\in S$.
\end{proof}
\[
\]
\section{Proof of Theorem~\ref{thm:intro-threshold}}\label{Proof of Theorem 1.1}
\label{sec:first-energy-threshold}
Recall that every nontrivial finite-energy solution of
\begin{equation}\label{eq:standard-critical-dirac}
D_{g_{\R^n}}\varphi
=
|\varphi|^{2^*-2}\varphi
\quad\text{in }\mathbb R^n
\end{equation}
satisfies 
\begin{equation}\label{eq:standard-dirac-energy}
\int_{\mathbb R^n}|\varphi|^{2^*}\,dy
\ge \left(\frac n2\right)^n\omega_n,
\end{equation}
where $\omega_n=\operatorname{Vol}_{g_0}(\mathbb S^n)$;
see \cite{BMW2021}.
A solution attaining equality is called a ground-state solution.
The bound also applies to distributional solutions in
$L^{2^*}(\mathbb R^n,\Sigma\mathbb R^n)$, since the equation
and the Hardy--Littlewood--Sobolev inequality imply that
such solutions belong to
$\mathcal D^{1/2}(\mathbb R^n,\Sigma\mathbb R^n)$.
\begin{prop}
\label{prop:first-energy-threshold}
Assume the hypotheses of
Proposition~\ref{prop:compactness-alternative},
with $H_i\ge h_0>0$ .
Suppose that
\begin{equation}\label{eq:first-energy-bound}
\limsup_{i\to\infty}\int_{\mathbb S^n}|\psi_i|^{2^*}\,dv_{g_0}
\le H_{\max}^{-n}\left(\frac n2\right)^n\omega_n,
\end{equation} where $H_{\max}=\max_{\mathbb S^n}H$. If $S\ne\varnothing$, then $S=\{p\}$,
$H(p)=H_{\max}$, and
\[
|\psi_i|^{2^*}\,dv_{g_0}
\rightharpoonup
H_{\max}^{-n}\left(\frac n2\right)^n\omega_n\,\delta_p
\qquad\text{weakly as measures on }\mathbb S^n.
\]
Moreover, let $\Phi$ be any limit obtained in Lemma~\ref{lem:rescaled-limit}
at $p$.
Then $H(p)^{\frac{n-1}{2}}\Phi$ is a ground-state solution
of \eqref{eq:standard-critical-dirac}.
\end{prop}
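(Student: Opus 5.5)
The plan is an energy-counting argument that compares the mass carried by each rescaled limit with the critical mass bound \eqref{eq:first-energy-bound}. Fix $p\in S$ and let $\Phi$ be a limit from Lemma~\ref{lem:rescaled-limit}. Then $\Phi\neq0$, $\Phi\in L^{2^*}$, and $D_{g_{\R^n}}\Phi=H(p)|\Phi|^{2^*-2}\Phi$. Since $H(p)\ge h_0>0$, the spinor $\widehat\Phi:=H(p)^{\frac{n-1}{2}}\Phi$ solves \eqref{eq:standard-critical-dirac}: indeed $|\widehat\Phi|^{2^*-2}=H(p)\,|\Phi|^{2^*-2}$, because $(2^*-2)\frac{n-1}{2}=1$. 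By \eqref{eq:standard-dirac-energy}, extended to distributional $L^{2^*}$ solutions as remarked above,
\[
\int_{\R^n}|\Phi|^{2^*}\,dz
=H(p)^{-n}\int_{\R^n}|\widehat\Phi|^{2^*}\,dz
\ge H(p)^{-n}\left(\frac n2\right)^n\omega_n
\ge H_{\max}^{-n}\left(\frac n2\right)^n\omega_n ,
\]
where we used $2^*\frac{n-1}{2}=n$.

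Next I relate this to the limiting measure $\mu$. For each $R>0$, the computation in the proof of Lemma~\ref{lem:rescaled-limit} gives
\[
\int_{B_R}|\Phi_i|^{2^*}\,dz=\mu_i\bigl(\pi_p^{-1}(y_i+\lambda_iB_R)\bigr),
\]
and the set $\pi_p^{-1}(y_i+\lambda_iB_R)$ shrinks to $p$. Choose $r>0$ with $\mu(\partial B_r^{g_0}(p))=0$. Local uniform convergence of $\Phi_i$ then gives $\mu(\overline{B_r^{g_0}(p)})\ge\int_{B_R}|\Phi|^{2^*}$. Letting $R\to\infty$ and then $r\to0$ yields
\[
\mu(\{p\})\ge\int_{\R^n}|\Phi|^{2^*}\ge H(p)^{-n}\left(\tfrac n2\right)^n\omega_n .
\]

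Now I chain the inequalities:
\[
H_{\max}^{-n}\left(\tfrac n2\right)^n\omega_n\ \ge\ \limsup_i\mu_i(\mathbb S^n)\ \ge\ \mu(\mathbb S^n)\ \ge\ \sum_{q\in S}\mu(\{q\})+\mu\bigl(\mathbb S^n\setminus S\bigr).
\]
Each atom at a point of $S$ is at least $H_{\max}^{-n}(n/2)^n\omega_n$, so the chain forces four conclusions. First, $S=\{p\}$ is a single point. Second, $H(p)=H_{\max}$. Third, $\mu(\mathbb S^n\setminus\{p\})=0$, and $\mu(\mathbb S^n)$ equals the threshold. Fourth, $\int|\widehat\Phi|^{2^*}=(n/2)^n\omega_n$, which is exactly the statement that $\widehat\Phi$ is a ground state. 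Hence $\mu=H_{\max}^{-n}(n/2)^n\omega_n\,\delta_p$. This holds for every convergent subsequence, and the original subsequence was already fixed in the definition of $\mu$, so the asserted weak convergence of the measures follows. Since $\mu$ assigns no mass off $p$, Proposition~\ref{S converhence} together with Fatou's lemma gives $\psi_\infty=0$, although this is not needed for the statement.

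The main delicate point is the step $\mu(\{p\})\ge\int|\Phi|^{2^*}$, that is, making sure the bubble mass is not lost in the limit. This requires only local convergence on balls $B_R$ and the monotone limit in $R$, so no tail control is needed for the lower bound. The only external input is that \eqref{eq:standard-dirac-energy} applies to $\widehat\Phi$. I would justify this as indicated above: by the Hardy--Littlewood--Sobolev inequality, $\widehat\Phi=\Gamma*(|\widehat\Phi|^{2^*-2}\widehat\Phi)$ lies in $\mathcal D^{1/2}$. The identification $\widehat\Phi=\Gamma*(\cdot)$ uses a cutoff argument together with the decay of $\widehat\Phi$ in $L^{2^*}$, which rules out a nonzero harmonic remainder.
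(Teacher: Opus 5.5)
Your proposal is correct and follows essentially the paper's argument. Both bound the bubble mass below by $H(p)^{-n}(n/2)^n\omega_n\ge H_{\max}^{-n}(n/2)^n\omega_n$ via \eqref{eq:standard-dirac-energy}, bound it above by the limiting total mass, and read off $H(p)=H_{\max}$, the ground-state property and the concentration from the forced equalities; the only difference is cosmetic, since you count through the atoms of $\mu$ and get $S=\{p\}$ from additivity, whereas the paper shows directly that $\limsup_i\int_{\mathbb S^n\setminus U}|\psi_i|^{2^*}\,dv_{g_0}=0$ for every neighborhood $U$ of $p$.
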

\begin{proof}
  Fix $p\in S$ and let $\Phi$ be any limit obtained in
Lemma~\ref{lem:rescaled-limit} at $p$. Then $\Phi$ is nontrivial and satisfies
\[
D_{g_{\R^n}}\Phi
=
H(p)|\Phi|^{2^*-2}\Phi
\qquad\text{in }\mathbb R^n.
\]
Since $H(p)>0$, applying \eqref{eq:standard-dirac-energy}
to $H(p)^{\frac{n-1}{2}}\Phi$ gives
\[
\int_{\mathbb R^n}|\Phi|^{2^*}\,dy
\ge
H(p)^{-n}\left(\frac n2\right)^n\omega_n.
\]

On the other hand, local convergence of the rescaled spinors
and conformal invariance of the critical integral imply that,
for every fixed $R>0$,
\[
\int_{B_R}|\Phi|^{2^*}\,dy
\le
\liminf_{i\to\infty}
\int_{\mathbb S^n}|\psi_i|^{2^*}\,dv_{g_0}.
\]
Letting $R\to\infty$, using \eqref{eq:first-energy-bound}
and $H(p)\le H_{\max}$, we obtain
\[
\begin{aligned}
H(p)^{-n}\left(\frac n2\right)^n\omega_n
&\le \int_{\mathbb R^n}|\Phi|^{2^*}\,dy\le \liminf_{i\to\infty}
     \int_{\mathbb S^n}|\psi_i|^{2^*}\,dv_{g_0}\le H_{\max}^{-n}\left(\frac n2\right)^n\omega_n\le H(p)^{-n}\left(\frac n2\right)^n\omega_n.
\end{aligned}
\]
Thus all inequalities are equalities. In particular,
$H(p)=H_{\max}$ and
$H(p)^{\frac{n-1}{2}}\Phi$ is a ground state.

Let $U$ be any open neighborhood of $p$.
For every fixed $R>0$, we have
\[
\pi_p^{-1}\bigl(B_{R\lambda_i}(y_i)\bigr)\subset U
\]
for all sufficiently large $i$.
By the energy bound, conformal invariance, and the
local convergence of $\Phi_i$, we obtain
\[
\limsup_{i\to\infty}
\int_{\mathbb S^n\setminus U}|\psi_i|^{2^*}\,dv_{g_0}
\le
H_{\max}^{-n}\left(\frac n2\right)^n\omega_n
-\int_{B_R}|\Phi|^{2^*}\,dz.
\]
Letting $R\to\infty$, the right-hand side tends to zero.
Since the preceding equalities also give
\[
\lim_{i\to\infty}
\int_{\mathbb S^n}|\psi_i|^{2^*}\,dv_{g_0}
=
H_{\max}^{-n}\left(\frac n2\right)^n\omega_n,
\]
the asserted measure convergence follows.
In particular, $S=\{p\}$.
\end{proof}
\begin{cor}
\label{cor:first-bubble}
Under the assumptions of Proposition~\ref{prop:first-energy-threshold}, let $\Phi$ be any limit obtained in
Lemma~\ref{lem:rescaled-limit} at $p$.
Then $\Psi:=H(p)^{\frac{n-1}{2}}\Phi$ admits the representation
\[
\Psi(y)
=
\sqrt{\lambda}
\left(\frac{2\lambda}{\lambda^2+|y-x_0|^2}\right)^{\frac n2}
\left(1-
\left(\frac{y-x_0}{\lambda}\right)\right)\cdot_{g_{\Rn}}
\widetilde{\Phi}_0,
\]
where
\[
x_0=0,\qquad
\lambda=\frac{n}{H(p)},\qquad
|\widetilde{\Phi}_0|
=\frac{1}{\sqrt{2}}\left(\frac n2\right)^{\frac{n-1}{2}},
\]
and $\widetilde{\Phi}_0\in\mathbb C^{2^{[ \frac{n}{2}]}}$ is a constant spinor.
In particular,
\begin{equation}\label{normalized-bubble-norm}
|\Phi(y)|
=
\left(1+\frac{H(p)^2}{n^2}|y|^2\right)^{-\frac{n-1}{2}}.
\end{equation}
\end{cor}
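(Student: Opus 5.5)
The proof rests on the classification of ground states by Borrelli--Malchiodi--Wu \cite{BMW2021}. By Proposition~\ref{prop:first-energy-threshold}, $\Psi=H(p)^{\frac{n-1}{2}}\Phi$ is a ground-state solution of \eqref{eq:standard-critical-dirac}. The classification says every ground state is the conformal image of a Killing spinor on $\mathbb S^n$. In Euclidean form this gives
\[
\Psi(y)=\sqrt{\lambda}\left(\frac{2\lambda}{\lambda^2+|y-x_0|^2}\right)^{\frac n2}\left(1-\frac{y-x_0}{\lambda}\right)\cdot_{g_{\Rn}}\widetilde\Phi_0 ,
\]
for some $\lambda>0$, $x_0\in\Rn$, and a constant spinor $\widetilde\Phi_0$ with $|\widetilde\Phi_0|=\frac{1}{\sqrt2}\left(\frac n2\right)^{\frac{n-1}{2}}$. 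This normalization is forced by the equation with coefficient $1$, which fixes the size of the Killing spinor. I will take the representation from \cite{BMW2021} as given, after checking that the Clifford conventions agree. What remains is to identify $x_0$ and $\lambda$ from the normalization in Lemma~\ref{lem:rescaled-limit}.

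First compute the pointwise norm. Clifford multiplication by a real vector $v$ is skew-Hermitian with $v\cdot v\cdot=-|v|^2$. Hence
\[
|(1-v)\cdot\widetilde\Phi_0|^2=|\widetilde\Phi_0|^2-2(\widetilde\Phi_0,v\cdot\widetilde\Phi_0)+|v|^2|\widetilde\Phi_0|^2 .
\]
The cross term vanishes, because $\langle \widetilde\Phi_0,v\cdot\widetilde\Phi_0\rangle$ is purely imaginary. With $v=(y-x_0)/\lambda$ this gives
\[
|\Psi(y)|=\sqrt\lambda\,|\widetilde\Phi_0|\,(2\lambda)^{\frac n2}\,\lambda^{-1}\,(\lambda^2+|y-x_0|^2)^{-\frac{n-1}{2}} .
\]
So $|\Psi|$ is a radially decreasing function of $|y-x_0|$, and its unique maximum is at $x_0$.

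By Lemma~\ref{lem:rescaled-limit}, $|\Phi|\le|\Phi(0)|=1$. Therefore the maximum of $|\Psi|$ is attained at $0$, which forces $x_0=0$. Using $|\Psi(0)|=H(p)^{\frac{n-1}{2}}$ together with the value of $|\widetilde\Phi_0|$ gives
\[
\sqrt\lambda\cdot\frac{1}{\sqrt2}\left(\frac n2\right)^{\frac{n-1}{2}}2^{\frac n2}\lambda^{-\frac n2}=H(p)^{\frac{n-1}{2}},
\]
that is,
\[
\left(\frac{n}{\lambda}\right)^{\frac{n-1}{2}}=H(p)^{\frac{n-1}{2}},
\]
so $\lambda=n/H(p)$. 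Dividing the norm formula by its value at $0$ then yields \eqref{normalized-bubble-norm}:
\[
|\Phi(y)|=\bigl(1+|y|^2/\lambda^2\bigr)^{-\frac{n-1}{2}} .
\]

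The main obstacle is transferring the classification of \cite{BMW2021} correctly. One point is that it applies to our distributional $L^{2^*}$ solutions, which was already noted before Proposition~\ref{prop:first-energy-threshold} via membership in $\mathcal D^{1/2}$. The other point is getting the constants right: the sign of the Clifford term, the normalization of $\widetilde\Phi_0$, and the exact power of $\lambda$. If the constant in the cited form differs, I would instead recover $|\widetilde\Phi_0|$ directly by plugging the ansatz into the equation at $y=0$. Alternatively, I would match the energy to $\left(\frac n2\right)^n\omega_n$.
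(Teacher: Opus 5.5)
Your proposal is correct and follows the same route as the paper. Both arguments invoke the Borrelli--Malchiodi--Wu classification (their Corollary~1.4) for the ground state $\Psi$, compute $|\Psi(y)|=\bigl(n\lambda/(\lambda^2+|y-x_0|^2)\bigr)^{\frac{n-1}{2}}$ from the Clifford relations, and then read off $x_0=0$ and $\lambda=n/H(p)$ from the normalization $|\Phi|\le|\Phi(0)|=1$, where your remark that $|\Psi|$ is strictly radially decreasing about $x_0$ simply makes explicit the step the paper leaves implicit.
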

\begin{proof}
By Proposition~\ref{prop:first-energy-threshold} and
\cite[Corollary~1.4]{BMW2021}, there exist
\(x_0\in\mathbb R^n\), \(\lambda>0\), and a constant spinor
\(\widetilde\Phi_0\in\mathbb C^N\), with
\[
|\widetilde\Phi_0|
=
\frac1{\sqrt2}\left(\frac n2\right)^{\frac{n-1}{2}},
\]
such that
\[
\Psi(y)
=\sqrt{\lambda}
\left(
\frac{2\lambda}{\lambda^2+|y-x_0|^2}
\right)^{\frac n2}
\left(
1-\left(\frac{y-x_0}{\lambda}\right)
\right)\cdot_{g_{\Rn}}
\widetilde\Phi_0 .
\]
Taking norms and using the Clifford relations gives
\[
|\Phi(y)|
=
\left(
\frac{n\lambda}{H(p)(\lambda^2+|y-x_0|^2)}
\right)^{\frac{n-1}{2}}.
\]
The normalization in Lemma~\ref{lem:rescaled-limit}
therefore implies $x_0=0$ and
\(
1=|\Phi(0)|
=(\frac{n}{H(p)\lambda})^{\frac{n-1}{2}}.
\)
Thus $\lambda=\frac{n}{H(p)}$.
\end{proof}
We now combine the preceding results to prove
Theorem~\ref{thm:intro-threshold}.
\begin{proof}[\normalfont\bfseries Proof of Theorem~\ref{thm:intro-threshold}]
Since $\|\psi_i\|_{L^\infty(\mathbb S^n)}\to\infty$,
Proposition~\ref{prop:compactness-alternative} gives $S\ne\varnothing$ after passing to
a subsequence. Proposition~\ref{prop:first-energy-threshold} then proves~(i).

Choose $y_i$, $\lambda_i$, and $\Phi_i$ as in Section~\ref{Blow-up rescaling near a concentration point},
with $\Phi_i$ defined on $\mathbb R^n$ by the same
rescaling formula. By Lemma~\ref{lem:rescaled-limit} and Corollary~\ref{cor:first-bubble},
\(
\Phi_i\longrightarrow\Phi\) in \(
C^0_{\mathrm{loc}}(\mathbb R^n),
\)
where $\Phi$ has the form stated in~(ii).
It remains to prove that
\(
\Phi_i\longrightarrow\Phi\) in \(
L^{2^*}(\mathbb R^n).
\)
By conformal invariance and Proposition~\ref{prop:first-energy-threshold},
\begin{equation}\label{total energy exhausts}
\int_{\mathbb R^n}|\Phi_i|^{2^*}\,dz
=\int_{\mathbb S^n}|\psi_i|^{2^*}\,dv_{g_0}\longrightarrow
H_{\max}^{-n}\left(\frac n2\right)^n\omega_n
=\int_{\mathbb R^n}|\Phi|^{2^*}\,dz.
\end{equation}
Together with local uniform convergence, this implies,
for every $R>0$,
\[
\limsup_{i\to\infty}
\int_{\mathbb R^n}|\Phi_i-\Phi|^{2^*}\,dz
\le
2^{2^*}\int_{\mathbb R^n\setminus B_R}
|\Phi|^{2^*}\,dz.
\]
Letting $R\to\infty$ proves convergence in
$L^{2^*}(\mathbb R^n)$ and completes~(ii).
\end{proof}

\section{Proof of Theorem~\ref{thm:intro-simple} }\label{Proof of Theorem 1.2}
Throughout this section, we assume the hypotheses of
Theorem~\ref{thm:intro-threshold} and
$\|\psi_i\|_{L^\infty(\mathbb S^n)}\to\infty$.
Let $p$ be the unique blow-up point, and retain the
spinors $\varphi_i$ and the rescalings $\Phi_i$
introduced in Subsection~\ref{Blow-up rescaling near a concentration point}.
\subsection{Isolated blow-up points}
\begin{prop}\label{prop:isolation}The point $p$ is an isolated blow-up point.
\end{prop}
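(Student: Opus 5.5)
We argue by contradiction, using the small-energy regularity of Corollary~\ref{cor:epsilon-regularity-scale} together with the fact that the first bubble exhausts the whole critical mass (Proposition~\ref{prop:first-energy-threshold} and \eqref{total energy exhausts}).

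Take $y_i$ as in Subsection~\ref{Blow-up rescaling near a concentration point}. These are maximum points of $|\varphi_i|$ on $\overline{U_\rho}$ lying in the open set $U_\rho$, hence local maximum points, with $y_i\to0$ and $|\varphi_i(y_i)|\to\infty$. Fix $\bar r>0$ with $B_{3\bar r}(y_i)\subset U_\rho$ for all large $i$. We must show
\[
\sup_{0<|y-y_i|<\bar r}|y-y_i|^{\frac{n-1}{2}}|\varphi_i(y)|\le C .
\]

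\textbf{Step 1: the energy of $\varphi_i$ on annuli around $y_i$ is uniformly small away from scale $\lambda_i$.} Let $\varepsilon_0$ be the constant of Corollary~\ref{cor:epsilon-regularity-scale}, with $A=\sup_i\|H_i\|_{\infty}$. By Theorem~\ref{thm:intro-threshold}(ii), $\Phi_i\to\Phi$ in $L^{2^*}(\mathbb R^n)$. Choose $R_0$ so large that $\int_{\mathbb R^n\setminus B_{R_0/2}}|\Phi|^{2^*}<\varepsilon_0/4$. Then, for large $i$,
\[
\int_{U_\rho\setminus B_{R_0\lambda_i/2}(y_i)}|\varphi_i|^{2^*}\,dy
\le\int_{\mathbb R^n\setminus B_{R_0/2}}|\Phi_i|^{2^*}\,dz<\varepsilon_0/2 .
\]
Here the first inequality is the change of variables $y=y_i+\lambda_i z$; it only needs $\Phi_i$ defined on all of $\mathbb R^n$ via the global chart, which holds since $\varphi_i$ is defined on $\mathbb R^n$.

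\textbf{Step 2: pointwise bound in the outer region.} Let $R_0\lambda_i\le|y-y_i|<\bar r$ and set $r=|y-y_i|/2$. Then $B_{2r}(y)\subset B_{3\bar r}(y_i)\setminus B_{R_0\lambda_i/2}(y_i)$, because every $w\in B_{2r}(y)$ satisfies
\[
|w-y_i|>|y-y_i|-2r=0 ;
\]
this margin is too tight to exclude the small ball. So we use $r=|y-y_i|/4$ instead. Then $|w-y_i|\ge |y-y_i|/2\ge R_0\lambda_i/2$, so $B_{2r}(y)$ stays outside $B_{R_0\lambda_i/2}(y_i)$. Its energy is therefore less than $\varepsilon_0$, and Corollary~\ref{cor:epsilon-regularity-scale} gives
\[
|\varphi_i(y)|\le C r^{-\frac{n-1}{2}}\varepsilon_0^{1/2^*}\le C|y-y_i|^{-\frac{n-1}{2}} .
\]

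\textbf{Step 3: pointwise bound in the inner region.} For $0<|y-y_i|<R_0\lambda_i$, maximality of $y_i$ gives
\[
|\varphi_i(y)|\le|\varphi_i(y_i)|=\lambda_i^{-\frac{n-1}{2}}\le R_0^{\frac{n-1}{2}}|y-y_i|^{-\frac{n-1}{2}} .
\]
Combining Steps 2 and 3 yields Definition~\ref{def:intro-isolated} with $C$ independent of $i$.

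\textbf{Expected main obstacle.} The delicate point is Step 1: transferring the global $L^{2^*}$ convergence of $\Phi_i$ into uniform smallness of the tail energy on the whole neighborhood $U_\rho$. This requires the exact energy equality \eqref{total energy exhausts}, and it is the only place where the threshold assumption enters. Without that assumption, additional bubbles could concentrate at intermediate scales and isolation would fail.
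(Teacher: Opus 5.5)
Your proof is correct and follows essentially the paper's argument. The $L^{2^*}$ convergence $\Phi_i\to\Phi$ from Theorem~\ref{thm:intro-threshold}(ii) makes the energy outside $B_{R\lambda_i}(y_i)$ smaller than $\varepsilon_0$; Corollary~\ref{cor:epsilon-regularity-scale} on balls of radius comparable to $|y-y_i|$ then handles the outer region, and maximality of $|\varphi_i(y_i)|$ handles $|y-y_i|<R_0\lambda_i$. The only fix needed is cosmetic: delete the abandoned choice $r=|y-y_i|/2$ in Step 2 and keep only $r=|y-y_i|/4$.
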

\begin{proof}
  We retain the local maximum points $y_i$ chosen in Subsection~\ref{Blow-up rescaling near a concentration point}.
For $\rho>0$ sufficiently small, $U_\rho\subset B_\rho(0)$.
By the local bounds away from the unique concentration
point, $\varphi_i$ is uniformly bounded on
$\overline{B_\rho(0)}\setminus U_\rho$.
Since $|\varphi_i(y_i)|\to\infty$, it follows that,
for all sufficiently large $i$,
\[
y_i\in B_\rho(0),
\qquad
|\varphi_i(y_i)|
=
\max_{\overline{B_\rho(0)}}|\varphi_i|.
\]
By Theorem~\ref{thm:intro-threshold}, after passing to
a subsequence,
\(
\Phi_i\longrightarrow\Phi\) in \(L^{2^*}(\mathbb R^n).
\)
Let $\varepsilon_0$ be the small-energy constant in
Corollary~\ref{cor:epsilon-regularity-scale}. Choose $R>0$ sufficiently large that
\[
\int_{\mathbb R^n\setminus B_R}
|\Phi|^{2^*}\,dz<\frac{\varepsilon_0}{2}.
\]
By the strong convergence and a change of variables,
for all sufficiently large $i$,
\[
\int_{B_\rho(0)\setminus B_{R\lambda_i}(y_i)}
|\varphi_i|^{2^*}\,dy
\le
\int_{\mathbb R^n\setminus B_R}
|\Phi_i|^{2^*}\,dz<\varepsilon_0.
\]
We claim that there exists \(C>0\), independent of \(i\), such that
\[
|\varphi_i(y)|
\leq
C |y-y_i|^{-\frac{n-1}{2}},
\qquad
0<|y-y_i|<\frac{\rho}{2}.
\]
Since \(R\) is fixed and \(\lambda_i\to0\), while \(y_i\to0\), we may assume, for all sufficiently large \(i\), that
\[
4R\lambda_i<\frac{\rho}{4},
\qquad
|y_i|<\frac{\rho}{8}.
\]
If $ |y-y_i|\leq 4R\lambda_i$, then the maximality
of $|\varphi_i(y_i)|$ in $B_\rho(0)$ gives
\[
|\varphi_i(y)|
\leq
|\varphi_i(y_i)|
=
\lambda_i^{-\frac{n-1}{2}}
\leq
(4R)^{\frac{n-1}{2}}
|y-y_i|^{-\frac{n-1}{2}}\leq
C|y-y_i|^{-\frac{n-1}{2}} .
\]If
$|y-y_i|>4R\lambda_i$, 
then
\(
B_{\frac{|y-y_i|}{4}}(y)
\subset
B_\rho(0)\setminus B_{R\lambda_i}(y_i)
\)
for all sufficiently large \(i\). Therefore
\[
\int_{B_{\frac{|y-y_i|}{4}}(y)}|\varphi_i|^{2^*}\,dy
\leq
\int_{B_\rho(0)\setminus B_{R\lambda_i}(y_i)}
|\varphi_i|^{2^*}\,dy
<
\varepsilon_0.
\]
By Corollary~\ref{cor:epsilon-regularity-scale} and the preceding energy bound,
\[
\begin{aligned}
|\varphi_i(y)|
&\le
C\left(\frac{|y-y_i|}{8}\right)^{-\frac{n-1}{2}}
\left(
\int_{B_{\frac{|y-y_i|}{4}}(y)}
|\varphi_i(z)|^{2^*}\,dz
\right)^{1/2^*}\\
&\le C|y-y_i|^{-\frac{n-1}{2}}.
\end{aligned}
\]
Combining the two cases proves the claim.
Hence $p$ is an isolated blow-up point.
\end{proof}
\subsection{Kelvin compactness}
\begin{lem}
\label{lem:kelvin-compactness} Let $p$, $y_i$, $\lambda_i$, $\Phi_i$, and $\Phi$ be
as in Subsection~\ref{Blow-up rescaling near a concentration point}, with the subsequence fixed in Lemma~\ref{lem:rescaled-limit}.
Using the Euclidean representatives $\varphi_i$ on all of
$\mathbb R^n$, define the same rescalings globally by
\[
 \Phi_i(z):=\lambda_i^{\frac{n-1}{2}}
             \varphi_i(y_i+\lambda_i z),
 \qquad z\in\mathbb R^n.
\]
For $x\ne0$, set
\[
 \Phi_i^K(x):=\frac{x}{|x|^n}\cdot
                 \Phi_i\!\left(\frac{x}{|x|^2}\right),
 \qquad
 \Phi^K(x):=\frac{x}{|x|^n}\cdot
                 \Phi\!\left(\frac{x}{|x|^2}\right).
\]
Then for all sufficiently large $i$, 
\begin{equation}\label{eq:kelvin-convergence}
 \Phi_i^K\longrightarrow\Phi^K
 \quad\text{in }C^0_{\mathrm{loc}}(\mathbb R^n),
 \qquad \Phi^K(0)=\left(\frac n{H(p)}\right)^{n-1}\Phi(0)\ne0.
\end{equation}
\begin{equation}\label{eq:kelvin-tail}
 |\Phi_i(z)|\le C(1+|z|)^{1-n}
 \qquad(z\in\mathbb R^n),
\end{equation}
where $C$ is independent of $i$.

Consequently, for every sequence $\mu_i>0$ with
$\mu_i/\lambda_i\to\infty$,
\begin{equation}\label{eq:kelvin-second-scale}
 \left(\frac{\mu_i}{\lambda_i}\right)^{n-1}
 \Phi_i\!\left(\frac{\mu_i}{\lambda_i}z\right)
 \longrightarrow
 -\left(\frac n{H(p)}\right)^{n-1}\frac{z}{|z|^n}\cdot\Phi(0)
 \quad\text{in }C^0_{\mathrm{loc}}
       (\mathbb R^n\setminus\{0\}).
\end{equation}
\end{lem}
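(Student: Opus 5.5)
The plan is to run the $\varepsilon$-regularity theory of Corollary~\ref{cor:epsilon-regularity-scale} on the Kelvin-transformed spinors near the origin, using that the Kelvin transform preserves the critical equation and the $L^{2^*}$ energy.

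\textbf{Step 1: the Kelvin transform preserves the equation.} The map $x\mapsto x/|x|^2$ is a conformal inversion of $\mathbb R^n$. The Dirac operator is conformally covariant, so the transform $u\mapsto \frac{x}{|x|^n}\cdot u(x/|x|^2)$ intertwines the Euclidean Dirac operator up to sign and a weight $|x|^{-2}$. Concretely, the standard identity is
\[
D\Bigl(\tfrac{x}{|x|^n}\cdot u(\tfrac{x}{|x|^2})\Bigr)=-|x|^{-2}\tfrac{x}{|x|^n}\cdot (Du)(\tfrac{x}{|x|^2}),
\]
possibly with a different sign convention. Since $|x\cdot v|=|x||v|$, we get
\[
D\Phi_i^K=\pm K_i(x)|\Phi_i^K|^{2^*-2}\Phi_i^K,\qquad K_i(x)=\widetilde H_i\bigl(y_i+\lambda_i x/|x|^2\bigr),
\]
with the sign absorbed by adjusting the Clifford convention. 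The coefficient satisfies $|K_i|\le A$ on $\mathbb R^n\setminus\{0\}$. A change of variables shows that the $L^{2^*}$ norm is invariant:
\[
\int_{B_r}|\Phi_i^K|^{2^*}=\int_{\mathbb R^n\setminus B_{1/r}}|\Phi_i|^{2^*}.
\]
By the $L^{2^*}$ convergence $\Phi_i\to\Phi$ from Theorem~\ref{thm:intro-threshold}, this tail is uniformly smaller than $\varepsilon_0$ once $r$ is small.

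\textbf{Step 2: removable singularity.} This is the step I expect to be the main obstacle. We need $\Phi_i^K$ to be a distributional solution across $0$. Since $\Phi_i^K\in L^{2^*}(B_r)$ and $|D\Phi_i^K|\in L^{2n/(n+1)}$, we can test against $\eta_\delta\xi$, where $\eta_\delta$ is a cutoff vanishing near $0$. The error term is bounded by
\[
\int|\nabla\eta_\delta||\Phi_i^K|\le C\|\Phi_i^K\|_{L^{2^*}(B_{2\delta})}\,\delta^{-1}\delta^{n(1-1/2^*)},
\]
and $n(1-1/2^*)=(n+1)/2>1$. So the error tends to $0$ as $\delta\to0$, and the point is removable.

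\textbf{Step 3: uniform bounds and convergence near the origin.} Corollary~\ref{cor:epsilon-regularity-scale} together with Lemma~\ref{lem:small-energy-regularity-Dirac} now gives uniform $L^\infty$ and $W^{1,q}$ bounds for $\Phi_i^K$ on $B_{r/2}$. Away from $0$, $\Phi_i^K\to\Phi^K$ locally uniformly because $\Phi_i\to\Phi$ in $C^0_{\rm loc}$. The uniform $C^{0,\gamma}$ bound on $B_{r/2}$ upgrades this to $C^0_{\rm loc}(\mathbb R^n)$ convergence, and the limit is the continuous extension of $\Phi^K$.

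\textbf{Step 4: computing $\Phi^K(0)$ and the tail bound.} From Corollary~\ref{cor:first-bubble}, for large $|z|$,
\[
\Phi(z)\approx c\,|z|^{-n}\bigl(-z/\lambda\bigr)\cdot\widetilde\Phi_0.
\]
Therefore
\[
\Phi^K(x)=\frac{x}{|x|^n}\cdot\Phi\Bigl(\frac{x}{|x|^2}\Bigr)\to c'\,\widetilde\Phi_0,
\]
using $x\cdot x=-|x|^2$. A direct computation, with $\lambda=n/H(p)$ and $\Phi(0)=c''\widetilde\Phi_0$, gives $\Phi^K(0)=\lambda^{n-1}\Phi(0)$.

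The tail bound \eqref{eq:kelvin-tail} follows from two cases. For $|z|\ge 2/r$, the uniform bound on $\Phi_i^K$ gives $|\Phi_i(z)|=|z|^{1-n}|\Phi_i^K(z/|z|^2)|\le C|z|^{1-n}$. For $|z|\le 2/r$, we use $|\Phi_i|\le1$.

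\textbf{Step 5: the second scale.} Set $t_i=\mu_i/\lambda_i\to\infty$ and write $\Phi_i(w)=\frac{w}{|w|^n}\cdot\Phi_i^K(w/|w|^2)$, using that $x\cdot x=-|x|^2$ inverts the Kelvin transform up to sign. Then
\[
t_i^{n-1}\Phi_i(t_iz)=-\frac{z}{|z|^n}\cdot\Phi_i^K\Bigl(\frac{z}{t_i|z|^2}\Bigr),
\]
with the sign coming from inverting the Clifford multiplication. Since $z/(t_i|z|^2)\to0$ uniformly on compacts of $\mathbb R^n\setminus\{0\}$, the uniform convergence $\Phi_i^K\to\Phi^K$ gives the limit $-\frac{z}{|z|^n}\cdot\Phi^K(0)$. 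This is \eqref{eq:kelvin-second-scale}.
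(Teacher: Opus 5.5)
Your proposal is correct and follows essentially the same route as the paper. You Kelvin-transform the globally rescaled spinors and use the vanishing $L^{2^*}$ tail to get small energy near the origin; your appeal to $L^{2^*}$ convergence is the same fact as the paper's energy exhaustion. You then remove the singularity with a cutoff, apply $\varepsilon$-regularity for uniform H\"older bounds, and read off $\Phi^K(0)$, the tail bound and the second-scale limit by inverting the Kelvin transform (with the minus sign you correctly include in Step 5). One cosmetic point: the sign in $D\Phi_i^K=-K_i|\Phi_i^K|^{2^*-2}\Phi_i^K$ need not be absorbed into any Clifford convention, since Corollary~\ref{cor:epsilon-regularity-scale} only requires $\|{-K_i}\|_{L^\infty}\le A$.
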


\begin{proof}By \eqref{total energy exhausts}  and the local convergence in Lemma~\ref{lem:rescaled-limit},
for every $R>0$,
\[
\int_{|z|>R}|\Phi_i|^{2^*}\,dz
=
\int_{\mathbb S^n}|\psi_i|^{2^*}\,dv_{g_0}
-\int_{B_R}|\Phi_i|^{2^*}\,dz\longrightarrow
\int_{|z|>R}|\Phi|^{2^*}\,dz.
\]
Consequently,
\begin{equation}\label{eq:kelvin-energy-tail 1}
\lim_{R\to\infty}\limsup_{i\to\infty}
\int_{|z|>R}|\Phi_i|^{2^*}\,dz=0.
\end{equation}

Clifford multiplication and inversion give
\begin{equation}\label{eq:kelvin-energy-tail 2}
 |\Phi_i^K(x)|
 =|x|^{1-n}\left|\Phi_i\!\left(\frac{x}{|x|^2}\right)\right|,
 \qquad
 \int_{B_r}|\Phi_i^K|^{2^*}\,\mathrm dx
 =\int_{|z|>1/r}|\Phi_i|^{2^*}\,\mathrm dz.
\end{equation}
Direct differentiation gives
\begin{equation}\label{eq:kelvin-equation}
  \begin{aligned}
D_{g_{\mathbb R^n}}\Phi_i^K(x)
&=
-\frac{x}{|x|^{n+2}}\cdot
(D_{g_{\mathbb R^n}}\Phi_i)\left(\frac{x}{|x|^2}\right)\\
&=-\widetilde H_i\!\left(y_i+\lambda_i\frac{x}{|x|^2}\right)
 |\Phi_i^K|^{2^*-2}\Phi_i^K
 \quad\text{in }\mathbb R^n\setminus\{0\}.
\end{aligned}
\end{equation}
The above equation also holds on all of $\mathbb R^n$
in the distributional sense. Let $\zeta\in C_c^\infty(\mathbb R^n,\Sigma\mathbb R^n)$,
and choose $0\le\chi_\varepsilon\le1$ such that
$\chi_\varepsilon=0$ on $B_\varepsilon$,
$\chi_\varepsilon=1$ outside $B_{2\varepsilon}$, and
$|\nabla\chi_\varepsilon|\le C\varepsilon^{-1}$.
Testing with $\chi_\varepsilon\zeta$, the additional term
satisfies
\[
\begin{aligned}
\left|
\int_{\mathbb R^n}
\langle\Phi_i^K,\nabla\chi_\varepsilon\cdot\zeta\rangle\,dx
\right|
&\le
C\|\zeta\|_\infty\varepsilon^{-1}
\int_{B_{2\varepsilon}}|\Phi_i^K|\,dx\\
&\le
C\|\zeta\|_\infty\varepsilon^{\frac{n-1}{2}}
\|\Phi_i^K\|_{L^{2^*}(B_{2\varepsilon})}
\longrightarrow0.
\end{aligned}
\]
Since $\Phi_i^K\in L^{2^*}$ and the coefficient is bounded,
the remaining terms converge by dominated convergence.
Thus the equation holds distributionally on $\mathbb R^n$.
By \eqref{eq:kelvin-energy-tail 1} and \eqref{eq:kelvin-energy-tail 2}, choose a fixed $\rho>0$ so small that,
for all sufficiently large $i$,
\[
 \int_{B_{8\rho}}|\Phi_i^K|^{2^*}<\varepsilon_0,
\]where $\varepsilon_0$ is the constant in Corollary~\ref{cor:epsilon-regularity-scale}.
By Corollary~~\ref{cor:epsilon-regularity-scale} and interior elliptic estimates, we obtain
\[
\|\Phi_i^K\|_{L^\infty(B_{4\rho})}
+
\|\Phi_i^K\|_{W^{1,2n}(B_{2\rho})}
\le C.
\]
Hence $\{\Phi_i^K\}$ is uniformly bounded in
$C^{0,1/2}(\overline{B_\rho})$ and is precompact in
$C^0(\overline{B_\rho})$.

By Corollary~\ref{cor:first-bubble},
\[
\Phi^K(x)
=\left(|x|^2+\frac{H(p)^2}{n^2}\right)^{-n/2}
\left(\frac{H(p)}n+x\cdot\right)\Phi(0).
\]
Thus $\Phi^K$ extends continuously at $0$, with
\[
\Phi^K(0)
=\left(\frac n{H(p)}\right)^{n-1}\Phi(0)\ne0.
\]By Lemma~\ref{lem:rescaled-limit} and the definition of the Kelvin transform,
$\Phi_i^K\to\Phi^K$ locally uniformly away from $0$.
Together with the above compactness and the continuity
of $\Phi^K$ at $0$, this yields
\[
\Phi_i^K\longrightarrow\Phi^K
\quad\text{in }C^0_{\mathrm{loc}}(\mathbb R^n).
\]
For $|z|\ge\rho^{-1}$, the uniform bound on $B_\rho$ gives
\[
|\Phi_i(z)|
=|z|^{1-n}
 \left|\Phi_i^K\left(\frac{z}{|z|^2}\right)\right|
\le C|z|^{1-n}.
\]
Together with the local uniform boundedness of $\Phi_i$,
this yields
\[
|\Phi_i(z)|\le C(1+|z|)^{1-n},
\qquad z\in\mathbb R^n.
\]

Finally, if $\mu_i/\lambda_i\to\infty$, then
\[
\begin{aligned}
\left(\frac{\mu_i}{\lambda_i}\right)^{n-1}
\Phi_i\left(\frac{\mu_i}{\lambda_i}z\right)
&=-\frac{z}{|z|^n}\cdot
\Phi_i^K\left(
\frac{\lambda_i}{\mu_i}\frac{z}{|z|^2}
\right)\\
&\longrightarrow
-\frac{z}{|z|^n}\cdot\Phi^K(0)  \quad\text{in } C^0_{\mathrm{loc}}(\mathbb R^n\setminus\{0\}).
\end{aligned}
\]

\end{proof}
\subsection{Proof of isolated simplicity}

We now work under the hypotheses of Theorem~\ref{thm:intro-simple}.
Let $U$ be a neighborhood of $p$ such that
\[
\sup_i\|H_i\|_{C^1(U)}<\infty.
\]
Choose $r_0>0$ such that
$\pi_p^{-1}(\overline{B_{2r_0}})\subset U$. Then
\[
\sup_i\|\widetilde H_i\|_{C^1(B_{2r_0})}<\infty.
\]
By Propositions~\ref{S converhence} and~\ref{prop:isolation}, there exists $A_3>0$,
independent of $i$, such that, for all sufficiently large $i$,
\begin{equation}\label{isolation bound}
|y-y_i|^{\frac{n-1}{2}}|\varphi_i(y)|\le A_3,
\qquad 0<|y-y_i|<1.
\end{equation}
\begin{lem}[Second-scale reduction]\label{second scale reduction}Suppose that $0$ is an isolated blow-up point satisfying
\eqref{isolation bound}, but is not simple. Then, after passing to a subsequence,
there exist second critical points $\mu_i$ of $\bar w_i$,
with $\mu_i\to0$ and $\mu_i/\lambda_i\to\infty$, such that 
\(
\xi_i(z):=\mu_i^{\frac{n-1}{2}}
\varphi_i(y_i+\mu_i z)
\)
satisfy
\[
D_{g_{\mathbb R^n}}\xi_i
=\widehat H_i|\xi_i|^{2^*-2}\xi_i,
\qquad
\widehat H_i(z):=\widetilde H_i(y_i+\mu_i z),
\]
\[
|\xi_i(0)|\to\infty,
\qquad
|\xi_i(z)|\le C|z|^{-\frac{n-1}{2}}
\quad
\left(0<|z|<\frac{1}{\mu_i}\right).
\]
Moreover, $0$ is an isolated simple blow-up point for
$\{\xi_i\}$, and
\[
\left.
\frac{d}{dr}
\left(
\frac{r^{n-1}}{|\partial B_r|}
\int_{\partial B_r}|\xi_i|^2\,d\sigma
\right)
\right|_{r=1}=0.
\]
\end{lem}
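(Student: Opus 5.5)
The plan follows Li's second-scale argument, adapted to the spinorial weighted average. First we locate the critical points of $\bar w_i$ at the bubble scale. In the rescaled variable $r=\lambda_i s$ we have
\[
\bar w_i(\lambda_i s)=\frac{s^{n-1}}{|\partial B_s|}\int_{\partial B_s}|\Phi_i|^2\,d\sigma .
\]
By Theorem~\ref{thm:intro-threshold}(ii), $\Phi_i\to\Phi$ in $C^0_{\mathrm{loc}}$, and by Corollary~\ref{cor:first-bubble} the limit profile is
\[
s^{n-1}|\Phi|^2=s^{n-1}\bigl(1+H(p)^2s^2/n^2\bigr)^{1-n}.
\]
This function has exactly one critical point, at $s_0=n/H(p)$, and it is nondegenerate.

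To transfer this to $\bar w_i$, we upgrade the convergence to $C^1_{\mathrm{loc}}$. Under the $C^1$ bound on $\widetilde H_i$, the interior Schauder/$W^{1,q}$ estimates applied to the rescaled equation give $C^{1,\gamma}_{\mathrm{loc}}$ bounds for $\Phi_i$. Hence $s\mapsto \bar w_i(\lambda_i s)$ converges in $C^1_{\mathrm{loc}}((0,\infty))$, and even in $C^2$ on compact sets if we use the equation once more. Consequently, for each fixed $R>s_0$ and large $i$, $\bar w_i$ has exactly one critical point $r_i=\lambda_i s_i\in(0,R\lambda_i)$, with $s_i\to s_0$. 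Near $s=0$ the function behaves like $s^{n-1}|\Phi_i(0)|^2$, which is increasing, so no spurious critical point appears there.

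Next we define the second critical point. Since $p$ is not simple, for every $\rho$ there are infinitely many $i$ for which $\bar w_i$ has a critical point in $(0,\rho)$ other than $r_i$. A diagonal argument, choosing $\rho_k\to0$, gives a subsequence and $\mu_i$ equal to the smallest critical point of $\bar w_i$ larger than $r_i$, with $\mu_i\to0$. By the previous step $\mu_i\ge R\lambda_i$ for every fixed $R$ eventually, so $\mu_i/\lambda_i\to\infty$.

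We then set $\xi_i(z)=\mu_i^{(n-1)/2}\varphi_i(y_i+\mu_i z)$. The equation for $\xi_i$ follows by the scaling invariance already used in Corollary~\ref{cor:epsilon-regularity-scale}. Moreover,
\[
|\xi_i(0)|=(\mu_i/\lambda_i)^{(n-1)/2}\to\infty .
\]
The bound $|\xi_i(z)|\le A_3|z|^{-(n-1)/2}$ for $|z|<1/\mu_i$ follows directly from \eqref{isolation bound} after scaling. The local maximum property of $0$ is inherited from $y_i$.

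The weighted averages transform as $\bar w^{\xi}_i(r)=\bar w_i(\mu_i r)$, because the weight $r^{n-1}$ exactly compensates the factor $\mu_i^{n-1}$. Therefore the derivative vanishes at $r=1$. Moreover, $\bar w^{\xi}_i$ has exactly one critical point in $(0,1)$, namely $r_i/\mu_i\to0$, by the minimality of $\mu_i$. Hence $0$ is an isolated simple blow-up point for $\{\xi_i\}$ with $\rho=1$ (or any $\rho<1$), which is how simplicity is to be read here.

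The main obstacle is the claim that $\bar w_i$ has no critical point on $(0,R\lambda_i)$ other than $r_i$. This requires $C^1$ convergence of $s\mapsto s^{n-1}\fint_{\partial B_s}|\Phi_i|^2$ uniformly on $[0,R]$, including near $s=0$. For small $s$, the derivative is
\[
(n-1)s^{n-2}\fint|\Phi_i|^2+s^{n-1}\,\partial_s\!\fint|\Phi_i|^2 .
\]
Using $|\Phi_i|\approx1$ and a uniform gradient bound on $\Phi_i$, this is at least
\[
s^{n-2}\bigl((n-1)/2-Cs\bigr)>0 .
\]
So $\bar w_i$ is increasing on $(0,\delta\lambda_i)$. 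On $[\delta,R]$ we use the nondegeneracy of the limit's unique critical point, which gives a unique nearby zero of the derivative for large $i$.
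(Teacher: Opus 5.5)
Your proposal is essentially the paper's argument. The steps match: convergence of $\bar w_i(\lambda_i s)$ to $\bigl(s/(1+H(p)^2s^2/n^2)\bigr)^{n-1}$, whose only critical point $s=n/H(p)$ is nondegenerate; monotonicity for small $s$ from $|\Phi_i(0)|=1$ and uniform $C^1$ bounds; then $\mu_i$ as the next critical point, and the scaling identity $\bar w^{\xi}_i(r)=\bar w_i(\mu_i r)$.

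The one step to pin down is the $C^2$ convergence you need for uniqueness of the critical point near $s_0$. Because $\widetilde H_i$ is only $C^1$, bootstrapping the Dirac equation gives only uniform $W^{2,p}$ (hence $C^{1,\gamma}$) bounds on $\Phi_i$, not $C^2$ bounds. The paper instead uses the identity $\Delta|\Phi_i|^2=2|\nabla\Phi_i|^2-2\widetilde H_i(y_i+\lambda_i z)^2|\Phi_i|^{2(2^*-1)}$. Its right-hand side is uniformly $C^{0,1/2}$, so scalar Schauder estimates give $|\Phi_i|^2\to|\Phi|^2$ in $C^2_{\mathrm{loc}}$.
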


\begin{proof}
  Fix $R>0$. For all sufficiently large $i$,
\(
y_i+\lambda_i B_{4R}\subset B_{2r_0}\cap U_\rho.
\)
By the choice of $y_i$, we have
\[
D_{g_{\mathbb R^n}}\Phi_i
=
\widetilde H_i(y_i+\lambda_i z)
|\Phi_i|^{2^*-2}\Phi_i,
\qquad
|\Phi_i|\le1
\quad\text{in }B_{4R}.
\]
Moreover,
\[
\bigl\|\widetilde H_i(y_i+\lambda_i\,\cdot)\bigr\|_{L^\infty(B_{4R})}
\le C,
\qquad
\bigl\|\nabla_z
[\widetilde H_i(y_i+\lambda_i\,\cdot)]\bigr\|_{L^\infty(B_{4R})}
\le C\lambda_i.
\]
Since the map $z\mapsto |z|^{2^*-2}z$ is continuously
differentiable in the real sense, interior elliptic estimates
and the chain rule give
\[
\|\Phi_i\|_{W^{2,2n}(B_{2R})}
+
\|\Phi_i\|_{C^{1,1/2}(\overline{B_{2R}})}
\le C_R.
\]
Since
\[
\Delta|\Phi_i|^2
=
2|\nabla\Phi_i|^2
-
2\widetilde H_i(y_i+\lambda_i z)^2
|\Phi_i|^{2(2^*-1)},
\]
the right-hand side is locally uniformly bounded
in $C^{0,1/2}$.
Scalar interior Schauder estimates and compactness
therefore give
\[
|\Phi_i|^2\longrightarrow|\Phi|^2
\quad\text{in }C^2_{\mathrm{loc}}(\mathbb R^n).
\]
Using Corollary~\ref{cor:first-bubble}, we obtain
\[
\overline w_i(\lambda_i s)=
\frac{s^{n-1}}{|\mathbb S^{n-1}|}
\int_{\mathbb S^{n-1}}|\Phi_i(s\theta)|^2\,d\theta
\longrightarrow
\left(
\frac{s}{1+\frac{H(p)^2}{n^2}s^2}
\right)^{n-1}
\quad\text{in }C^2_{\mathrm{loc}}((0,\infty)).
\]
The limit has a unique nondegenerate critical point at
$s=n/H(p)$.
Moreover, $|\Phi_i(0)|=1$ and the uniform $C^1$ bounds
imply that $\overline w_i(\lambda_i s)$ is strictly increasing
for all sufficiently small $s>0$, uniformly in $i$.
Consequently, for every fixed $R>n/H(p)$,
$\overline w_i$ has precisely one critical point in
$(0,R\lambda_i)$ for all sufficiently large $i$.

Since $0$ is not simple, for every $\varepsilon\in(0,1)$
there are arbitrarily large $i$ for which $\bar w_i$ has
at least two critical points in $(0,\varepsilon)$.
Thus, after passing to a subsequence, the second critical
point $\mu_i$ satisfies $0<\mu_i<1/i$.
For every fixed $R>n/H(p)$, the preceding uniqueness implies
$\mu_i\ge R\lambda_i$ for all sufficiently large $i$.
Hence $\mu_i/\lambda_i\to\infty$.

Set
\[
\xi_i(z):=\mu_i^{\frac{n-1}{2}}
\varphi_i(y_i+\mu_i z),
\qquad |z|<\mu_i^{-1}.
\]By the Euclidean equation and 
\eqref{isolation bound}, the rescaled spinors satisfy
\[
\begin{aligned}
D_{g_{\mathbb R^n}}\xi_i
&=\widehat H_i|\xi_i|^{2^*-2}\xi_i,
&& |z|<\mu_i^{-1},\\
|z|^{\frac{n-1}{2}}|\xi_i(z)|
&\le A_3,
&& 0<|z|<\mu_i^{-1},\\
|\xi_i(0)|
&=\left(\frac{\mu_i}{\lambda_i}\right)^{\frac{n-1}{2}}
\longrightarrow\infty,
\end{aligned}
\]where $\widehat H_i(z):=\widetilde H_i(y_i+\mu_i z)$.
Moreover, by the definition of $\xi_i$,
\[
\frac{r^{n-1}}{|\partial B_r|}
\int_{\partial B_r}|\xi_i|^2\,d\sigma
=
\bar w_i(\mu_i r).
\]
Since $\mu_i$ is the second critical point of $\bar w_i$,
the left-hand side has exactly one critical point in $(0,1)$,
and
\[
\left.
\frac{d}{dr}
\left(
\frac{r^{n-1}}{|\partial B_r|}
\int_{\partial B_r}|\xi_i|^2\,d\sigma
\right)
\right|_{r=1}
=
\mu_i\bar w_i'(\mu_i)=0.
\]
Thus, $0$ is an isolated simple blow-up point for $\{\xi_i\}$.
\end{proof}

\begin{lem}
\label{lem:simp-squared-norm}
Let $\Omega\subset\mathbb R^n$ be open. Suppose that $u_i\in L^\infty_{\mathrm{loc}}(\Omega,\Sigma\mathbb R^n)$
satisfy
\[
 D_{g_{\mathbb R^n}}u_i=V_i u_i\quad\text{in }\mathcal D'(\Omega,\Sigma\mathbb R^n),
\]
where $V_i$ are real-valued and, for every $U\Subset\Omega$,
\[
 \sup_i\bigl(\|u_i\|_{L^\infty(U)}
              +\|V_i\|_{L^\infty(U)}\bigr)<\infty.
\]Then
\(
\sup_i\bigl\||u_i|^2\bigr\|_{W^{2,p}(U)}<\infty\) for every \( 1<p<\infty.
\) In particular,
if $u_i\to u$ in $C^0_{\mathrm{loc}}(\Omega)$, then
\[
|u_i|^2\to |u|^2
\quad\text{in }C^1_{\mathrm{loc}}(\Omega).
\]
\end{lem}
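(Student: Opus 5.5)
The plan is to bootstrap from the $L^\infty$ bound to $W^{1,p}$ bounds on $u_i$. Then we compute $\Delta|u_i|^2$ through the Schrödinger--Lichnerowicz formula; on flat $\mathbb R^n$ it reduces to $D^2=-\Delta$. We will see that this Laplacian is controlled in $L^p$ without ever differentiating $V_i$, and scalar Calderón--Zygmund estimates then give the $W^{2,p}$ bound.

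\emph{Step 1: first-order bounds.} Fix $U\Subset U'\Subset U''\Subset\Omega$. Since $\|V_iu_i\|_{L^\infty(U'')}\le C$, we have $D_{g_{\mathbb R^n}}u_i\in L^p(U'')$ uniformly for every $p<\infty$. The interior Dirac estimate from the preliminaries, with $k=0$, then gives $\sup_i\|u_i\|_{W^{1,p}(U')}<\infty$ for all $1<p<\infty$.

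\emph{Step 2: the key identity.} Write $\Delta|u_i|^2=2\operatorname{Re}\langle\Delta u_i,u_i\rangle+2|\nabla u_i|^2$. Using $-\Delta=D^2$ and self-adjointness, we get, at least formally,
\[
-\operatorname{Re}\langle \Delta u_i,u_i\rangle=\operatorname{Re}\langle D(V_iu_i),u_i\rangle .
\]
We avoid differentiating $V_i$ by testing against $\zeta\in C_c^\infty(U')$ and moving one $D$ onto $\zeta u_i$:
\[
\int \operatorname{Re}\langle D(V_iu_i),\zeta u_i\rangle=\int V_i\operatorname{Re}\langle u_i,\nabla\zeta\cdot u_i+\zeta V_iu_i\rangle .
\]
Because $\operatorname{Re}\langle u,X\cdot u\rangle=0$ by skew-adjointness of Clifford multiplication and $V_i$ is real, the $\nabla\zeta$ term vanishes. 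This leaves $\int\zeta V_i^2|u_i|^2$. Hence, in $\mathcal D'(U')$,
\[
\Delta|u_i|^2=2|\nabla u_i|^2-2V_i^2|u_i|^2 .
\]
This is consistent with the formula used in the proof of Lemma~\ref{second scale reduction}.

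\emph{Step 3: making Step 2 rigorous.} This is the main technical obstacle, since $u_i$ is only $W^{1,p}$ and $D(V_iu_i)$ is only a distribution. I would mollify, setting $u_i^\varepsilon=u_i*\rho_\varepsilon$. Then $Du_i^\varepsilon=(V_iu_i)*\rho_\varepsilon=:f^\varepsilon$, and for smooth $u^\varepsilon$ the identity
\[
\Delta|u^\varepsilon|^2=2|\nabla u^\varepsilon|^2-2\operatorname{Re}\langle D f^\varepsilon,u^\varepsilon\rangle
\]
holds exactly. Pair with $\zeta$ and integrate the $Df^\varepsilon$ term by parts. The resulting expression involves only $f^\varepsilon,\ u^\varepsilon,\ \nabla u^\varepsilon,\ \nabla\zeta$, and these converge in $L^p_{\mathrm{loc}}$ to $V_iu_i,\ u_i,\ \nabla u_i$. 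Passing to the limit yields
\[
\int|u_i|^2\Delta\zeta=\int\bigl(2|\nabla u_i|^2\zeta-2\operatorname{Re}\langle V_iu_i,D(\zeta u_i)\rangle\bigr),
\]
and expanding $D(\zeta u_i)$ and using the Clifford skew-adjointness gives the stated identity.

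\emph{Step 4: conclusion.} The right-hand side is bounded in $L^{p/2}(U')$ uniformly for every $p$, by Step 1 and the $L^\infty$ bounds. Interior $W^{2,q}$ estimates for the scalar Laplacian then give $\sup_i\||u_i|^2\|_{W^{2,q}(U)}<\infty$ for all $1<q<\infty$. For $q>n$, Morrey's inequality bounds $|u_i|^2$ in $C^{1,\alpha}(\overline U)$, and this embeds compactly in $C^1(\overline U)$. Every subsequence therefore has a $C^1$-convergent subsequence, whose limit must be $|u|^2$ by the $C^0$ convergence. Hence $|u_i|^2\to|u|^2$ in $C^1_{\mathrm{loc}}(\Omega)$.
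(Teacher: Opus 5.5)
Your proposal is correct and follows essentially the same route as the paper. The paper also gets uniform $W^{1,p}$ bounds from the interior Dirac estimate. It then tests with a cutoff and moves one $D$ onto $\chi u_i$, where the Clifford term $\operatorname{Re}\langle u_i,\nabla\chi\cdot u_i\rangle$ vanishes, to get $\Delta|u_i|^2=2|\nabla u_i|^2-2V_i^2|u_i|^2$ in $\mathcal D'$, and finishes with scalar $W^{2,p}$ estimates and compact embedding. Your mollification in Step 3 is simply an explicit version of the paper's ``by density'' justification of that identity.
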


\begin{proof}
For $1<p<\infty$ and $U\Subset U'\Subset\Omega$,
interior elliptic estimates give
\[
\begin{aligned}
\|u_i\|_{W^{1,p}(U)}
&\le C\bigl(
\|D_{g_{\mathbb R^n}}u_i\|_{L^p(U')}
+\|u_i\|_{L^p(U')}\bigr)\\
&\le C\bigl(1+\|V_i\|_{L^\infty(U')}\bigr)
\|u_i\|_{L^p(U')}
\le C.
\end{aligned}
\]
For any $\chi\in C_c^\infty(\Omega,\mathbb R)$,
integration by parts and
$D_{g_{\mathbb R^n}}^2=-\Delta$, with
$\Delta=\sum_{j=1}^n\partial_j^2$, give, by density,
\[
\begin{aligned}
\frac12\int_\Omega |u_i|^2\Delta\chi
&=\int_\Omega\chi|\nabla u_i|^2
-\int_\Omega
( D_{g_{\mathbb R^n}}u_i,
D_{g_{\mathbb{R}^n}}(\chi u_i))\\
&=\int_\Omega\chi
\bigl(|\nabla u_i|^2-V_i^2|u_i|^2\bigr).
\end{aligned}
\]
Thus
\begin{equation}\label{eq:simp-norm-identity}
\Delta|u_i|^2
=2|\nabla u_i|^2-2V_i^2|u_i|^2
\quad\text{in }\mathcal D'(\Omega).
\end{equation}Combining \eqref{eq:simp-norm-identity} with interior elliptic estimates,
we obtain
\[
\begin{aligned}
\bigl\||u_i|^2\bigr\|_{W^{2,p}(U)}
&\le C\left(
\bigl\|\Delta|u_i|^2\bigr\|_{L^p(U')}
+\bigl\||u_i|^2\bigr\|_{L^p(U')}\right)\\
&\le C\left(
\|\nabla u_i\|_{L^{2p}(U')}^2
+\bigl(1+\|V_i\|_{L^\infty(U')}^2\bigr)
\|u_i\|_{L^{2p}(U')}^2\right)
\le C.
\end{aligned}
\]
If $u_i\to u$ in $C^0_{\mathrm{loc}}(\Omega)$,
the preceding bounds with $p>n$ and compact Sobolev embedding
yield 
\[
|u_i|^2\to |u|^2
\quad\text{in }C^1_{\mathrm{loc}}(\Omega).
\]
\end{proof}

\begin{lem}[The normalized second-scale limit]
\label{lem:simp-second-scale-limit}
Let $\mu_i$ and $\xi_i$ be as in Lemma~\ref{second scale reduction}.
With the notation of Lemma~\ref{lem:kelvin-compactness} and along the subsequence
fixed there, we have
\begin{equation}\label{eq:simp-second-limit}
 |\xi_i(0)|\xi_i (z)\longrightarrow-\left(\frac n{H(p)}\right)^{n-1}\frac{z}{|z|^n}\cdot\Phi(0)
 \quad\text{in }C^0_{\mathrm{loc}}(\mathbb R^n\setminus\{0\}).
\end{equation}
Moreover,
\begin{equation}\label{eq:simp-second-norm-limit}
 |\xi_i(0)|^2|\xi_i(z)|^2\longrightarrow
\left(\frac{n}{H(p)}\right)^{2n-2}|z|^{2-2n}
 \quad\text{in }C^1_{\mathrm{loc}}(\mathbb R^n\setminus\{0\}).
\end{equation}
\end{lem}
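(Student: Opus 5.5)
The first limit \eqref{eq:simp-second-limit} is an algebraic rewriting of \eqref{eq:kelvin-second-scale}, and the second \eqref{eq:simp-second-norm-limit} then follows from Lemma~\ref{lem:simp-squared-norm}.

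\emph{Step 1: reduce to the Kelvin limit.} Write $\xi_i$ in terms of $\Phi_i$. Since $\Phi_i(z)=\lambda_i^{\frac{n-1}{2}}\varphi_i(y_i+\lambda_i z)$,
\[
\xi_i(z)=\mu_i^{\frac{n-1}{2}}\varphi_i(y_i+\mu_i z)
=\Bigl(\frac{\mu_i}{\lambda_i}\Bigr)^{\frac{n-1}{2}}\Phi_i\Bigl(\frac{\mu_i}{\lambda_i}z\Bigr),
\qquad
|\xi_i(0)|=\Bigl(\frac{\mu_i}{\lambda_i}\Bigr)^{\frac{n-1}{2}}.
\]
The second identity uses $|\Phi_i(0)|=1$, as recorded in Lemma~\ref{second scale reduction}. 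Hence
\[
|\xi_i(0)|\,\xi_i(z)=\Bigl(\frac{\mu_i}{\lambda_i}\Bigr)^{n-1}\Phi_i\Bigl(\frac{\mu_i}{\lambda_i}z\Bigr).
\]
Lemma~\ref{second scale reduction} gives $\mu_i/\lambda_i\to\infty$, so \eqref{eq:kelvin-second-scale} yields \eqref{eq:simp-second-limit} directly, in $C^0_{\mathrm{loc}}(\mathbb R^n\setminus\{0\})$. The only point to check is that we stay on the subsequence fixed in Lemma~\ref{lem:kelvin-compactness}. The subsequence of Lemma~\ref{second scale reduction} can be extracted from that one, so this causes no difficulty.

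\emph{Step 2: upgrade the squared norm to $C^1$.} Set $u_i:=|\xi_i(0)|\,\xi_i$ on $\Omega:=\mathbb R^n\setminus\{0\}$. Since the equation is homogeneous in the spinor after freezing the potential, $u_i$ satisfies
\[
D_{g_{\mathbb R^n}}u_i=V_iu_i,
\qquad
V_i:=\widehat H_i|\xi_i|^{2^*-2}.
\]
This equation holds on $\{|z|<\mu_i^{-1}\}$, which exhausts $\Omega$ as $\mu_i\to 0$. The potential $V_i$ is real-valued.

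We need local bounds on $u_i$ and $V_i$. For $u_i$: on each $U\Subset\Omega$, $u_i$ converges uniformly by Step 1, so $\sup_i\|u_i\|_{L^\infty(U)}<\infty$. For $V_i$: we have $|\widehat H_i|\le C$. The isolation bound in Lemma~\ref{second scale reduction} gives $|\xi_i(z)|\le C|z|^{-\frac{n-1}{2}}$, which is bounded on $U$. In fact $|\xi_i|=|u_i|/|\xi_i(0)|\to0$ uniformly on $U$, so $V_i\to0$ there.

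Lemma~\ref{lem:simp-squared-norm} therefore applies, and gives $|u_i|^2\to|u|^2$ in $C^1_{\mathrm{loc}}(\Omega)$, where $u$ is the limit in \eqref{eq:simp-second-limit}.

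\emph{Step 3: identify the limit.} The Clifford relation $|z\cdot\phi|=|z||\phi|$ gives
\[
|u(z)|^2=\Bigl(\frac{n}{H(p)}\Bigr)^{2n-2}|z|^{2-2n}|\Phi(0)|^2
=\Bigl(\frac{n}{H(p)}\Bigr)^{2n-2}|z|^{2-2n},
\]
since $|\Phi(0)|=1$. This is \eqref{eq:simp-second-norm-limit}.

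The main point to watch is Step 2. It requires the bounds on $V_i$ to hold uniformly on compact subsets away from the origin, and the domains $|z|<\mu_i^{-1}$ to exhaust $\Omega$; both are supplied by Lemma~\ref{second scale reduction}. No $C^1$ information on $\widehat H_i$ is needed beyond its $L^\infty$ bound.
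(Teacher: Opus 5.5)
Your proposal is correct and follows the paper's proof essentially step for step. You rewrite $|\xi_i(0)|\xi_i$ as $(\mu_i/\lambda_i)^{n-1}\Phi_i(\mu_i z/\lambda_i)$ to invoke the Kelvin second-scale limit. You then apply Lemma~\ref{lem:simp-squared-norm} with the real potential $\widehat H_i|\xi_i|^{2^*-2}$, which tends to zero locally uniformly away from the origin, and identify the limit via $|z\cdot\Phi(0)|=|z||\Phi(0)|$. Your remarks on the subsequence and on the domains $\{|z|<\mu_i^{-1}\}$ exhausting $\mathbb R^n\setminus\{0\}$ only make explicit what the paper leaves implicit.
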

\begin{proof}
By definition,
\[
|\xi_i(0)|
=\left(\frac{\mu_i}{\lambda_i}\right)^{\frac{n-1}{2}},
\qquad|\xi_i(0)|\xi_i(z)
=\left(\frac{\mu_i}{\lambda_i}\right)^{n-1}
\Phi_i\left(\frac{\mu_i}{\lambda_i}z\right).
\]Thus, \eqref{eq:simp-second-limit} follows from Lemma~\ref{lem:kelvin-compactness}.
Moreover,
\[
D_{g_{\mathbb R^n}}\bigl(|\xi_i(0)|\xi_i\bigr)
=\widehat H_i|\xi_i|^{2^*-2}
\bigl(|\xi_i(0)|\xi_i\bigr).
\]
Fix a bounded open set $U$ in $\mathbb R^n$ with
$0\notin\overline U$. By \eqref{eq:simp-second-limit},
\(
\bigl\||\xi_i(0)|\xi_i\bigr\|_{L^\infty(U)}
\le C_U.
\)
Since $|\xi_i(0)|\to\infty$ and $\widehat H_i$ are uniformly bounded,
we obtain
\[
\begin{aligned}
\bigl\|\widehat H_i|\xi_i|^{2^*-2}\bigr\|_{L^\infty(U)}
&\le
C|\xi_i(0)|^{-(2^*-2)}
\bigl\||\xi_i(0)|\xi_i\bigr\|_{L^\infty(U)}^{2^*-2}\\
&\le C|\xi_i(0)|^{-(2^*-2)}
\longrightarrow0.
\end{aligned}
\]
Thus, the coefficient is real-valued and locally uniformly
bounded on $\mathbb R^n\setminus\{0\}$. Applying Lemma~\ref{lem:simp-squared-norm}
to $|\xi_i(0)|\xi_i$ and using \eqref{eq:simp-second-limit}, we conclude that
\[
|\xi_i(0)|^2|\xi_i(z)|^2
\longrightarrow
\left|
-\left(\frac{n}{H(p)}\right)^{n-1}
\frac{z}{|z|^n}\cdot\Phi(0)
\right|^2
\quad\text{in }
C^1_{\mathrm{loc}}(\mathbb R^n\setminus\{0\}).
\]
Since $|\Phi(0)|=1$ and Clifford multiplication satisfies
$|z\cdot\Phi(0)|^2=|z|^2|\Phi(0)|^2$, the limit equals
\(
\left(\frac{n}{H(p)}\right)^{2n-2}|z|^{2-2n}.
\)
This proves \eqref{eq:simp-second-norm-limit}.
\end{proof}
\begin{proof}[\normalfont\bfseries Proof of Theorem~\ref{thm:intro-simple}]
By Proposition~\ref{prop:isolation}, $p$ is an isolated blow-up point.
Suppose, for a contradiction, that $p$ is not simple. Let $\mu_i$ and $\xi_i$ be given by Lemma~~\ref{second scale reduction}. 
By Lemma~\ref{lem:simp-second-scale-limit},
\[
\frac{r^{n-1}}{|\mathbb S^{n-1}|}
\int_{\mathbb S^{n-1}}
|\xi_i(0)|^2|\xi_i(r\theta)|^2\,d\theta
\longrightarrow
\left(\frac{n}{H(p)}\right)^{2n-2}r^{1-n}
\quad\text{in }C^1([1/2,2]).
\]
By Lemma~\ref{second scale reduction}, the derivative of the left-hand side
vanishes at $r=1$. Passing to the limit gives
\[
0=
\left.\frac{d}{dr}
\left[
\left(\frac{n}{H(p)}\right)^{2n-2}r^{1-n}
\right]\right|_{r=1}
=-(n-1)\left(\frac{n}{H(p)}\right)^{2n-2}<0,
\]
a contradiction. Hence $p$ is an isolated simple blow-up point.
\end{proof}
\section{Proof of Theorem~\ref{thm:intro-compactness}}\label{Proof of Theorem 1.3}
Throughout Subsections~\ref{Sharp decay estimates} and~\ref{Green convergence}, we retain the hypotheses
and notation of Theorem~\ref{thm:intro-threshold} and assume that
$\|\psi_i\|_{L^\infty(\mathbb S^n)}\to\infty$.
\subsection{Sharp decay estimates}\label{Sharp decay estimates}
For the estimates in this subsection, we additionally assume
that $\widetilde H_i\in C^1(B_2)$ and
\[
\|\nabla\widetilde H_i\|_{L^\infty(B_2)}\le 2A_2.
\]
Together with the uniform positivity and boundedness of $H_i$,
this gives \eqref{eq:euclidean-coefficient-bounds} after choosing $A_1$ sufficiently large.

\begin{equation}\label{eq:euclidean-coefficient-bounds}
A_1^{-1}\le \widetilde H_i(y)\le A_1
\quad (y\in B_2),
\qquad
\|\nabla\widetilde H_i\|_{L^\infty(B_2)}
\le 2A_2.
\end{equation}
The following inequality will be used
to derive the decay estimate in Lemma~\ref{lem:local-green}.
\begin{lem}\label{exponential estimates}
Let $a>0$, let $v(t)\ge0$ be continuous on $[t_0,t_1]$, and let
$f(t)\in C^2([t_0,t_1])$ satisfy
\[
 f(t)>0,\qquad f'(t)\le0,\qquad f''(t)\ge(4a^2-4av(t))f(t).
\]
Then, for $t_0\le t\le t_1$,
\begin{equation}\label{eq:dirac-green-exact-scalar}
 f(t)\le
 \exp\left(1+2\int_{t_0}^{t_1}v(s)\,\mathrm{d} s\right)
 f(t_0)e^{-2a(t-t_0)}.
\end{equation}
\end{lem}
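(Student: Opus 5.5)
The plan is to pass to the logarithmic derivative and compare it with the exact exponential rate $-2a$. Since $f>0$ and $f\in C^2$, the function $g:=f'/f$ is $C^1$ on $[t_0,t_1]$ with $g\le 0$. Moreover
\[
g'=\frac{f''}{f}-g^2\ \ge\ 4a^2-4av-g^2=(2a-g)(2a+g)-4av .
\]
Set $h:=g+2a$. Then $\log f(t)-\log f(t_0)=\int_{t_0}^t g=-2a(t-t_0)+\int_{t_0}^t h$. It therefore suffices to prove
\[
\int_{t_0}^{t_1} h_+\,ds\le 1+2\int_{t_0}^{t_1}v\,ds ,
\]
because $\int_{t_0}^t h\le\int_{t_0}^{t_1}h_+$ for every $t\in[t_0,t_1]$. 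Exponentiating this bound gives \eqref{eq:dirac-green-exact-scalar}.

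Next I derive a linear differential inequality for $h$ where it is positive. On the open set $\{h>0\}$ we have $2a+g>0$ and, from $g\le 0$, also $2a-g\ge 2a$. Hence
\[
h'\ge 2a\,h-4a\,v \quad\text{on }\{h>0\},
\qquad\text{equivalently}\qquad
\bigl(e^{-2as}h\bigr)'\ge -4a\,v(s)\,e^{-2as}.
\]
The set $\{h>0\}\cap[t_0,t_1]$ is relatively open, so it is a countable union of disjoint intervals. On each component $I$, let $\tau_I$ be its right endpoint. Either $\tau_I<t_1$, in which case $h(\tau_I)=0$ by continuity, or $\tau_I=t_1$, in which case $h(\tau_I)\le 2a$ because $g\le0$.

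Integrating the inequality from $s$ to $\tau_I$ gives, for $s\in I$,
\[
h(s)\le e^{-2a(\tau_I-s)}h(\tau_I)+4a\int_s^{\tau_I}v(\sigma)e^{-2a(\sigma-s)}\,d\sigma .
\]
The first term vanishes except on the single component with $\tau_I=t_1$. There its integral over $s$ is at most $\int_{-\infty}^{t_1}2a\,e^{-2a(t_1-s)}ds=1$.

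For the second term I sum over components. The components are disjoint, and $v\ge0$, so each inner integral over $[s,\tau_I]$ may be enlarged to $[s,t_1]$. The sum is then at most
\[
4a\int_{t_0}^{t_1}\!\int_s^{t_1}v(\sigma)e^{-2a(\sigma-s)}\,d\sigma\,ds
=4a\int_{t_0}^{t_1}v(\sigma)\int_{t_0}^{\sigma}e^{-2a(\sigma-s)}\,ds\,d\sigma\le 2\int_{t_0}^{t_1}v ,
\]
where the middle equality is Fubini. Adding the two contributions gives the required bound on $\int h_+$ and hence the lemma.

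The main obstacle is the bookkeeping on the components of $\{h>0\}$. The key point is that the boundary term survives only at the right endpoint $t_1$, where it is controlled by $h\le 2a$; this is exactly what produces the constant $1$ in the exponent. The hypothesis $f'\le0$ is used twice: it gives $2a-g\ge 2a$, which linearizes the Riccati inequality, and it gives $h\le 2a$, which bounds that endpoint term.
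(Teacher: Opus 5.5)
Your proof is correct and follows essentially the same route as the paper, which works with the absolutely continuous function $(2a+f'/f)_+$, derives $g'\ge 2ag-4av$ almost everywhere, integrates backward to $t_1$ using the endpoint bound $\le 2a$ to get the constant $1$, and applies Tonelli to the $v$-term exactly as you do. The only difference is that you handle the positive part by integrating separately on the components of $\{h>0\}$ instead of using the a.e. derivative of the truncated function on all of $[t_0,t_1]$, which is a harmless bookkeeping variant.
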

\begin{proof}
Put \(g(t)=(2a+\frac{f'(t)}{f(t)})_+\), then
 $g(t)$ is absolutely continuous and $0\le g(t)\le2a$. 
Where $g(t)>0$, 
\[
 g'(t)=\frac{f''(t)}{f(t)}-\left(\frac{f'(t)}{f(t)}\right)^2\ge4a^2-4av(t)-(g(t)-2a)^2\ge2ag(t)-4av(t).
\]Where $g(t)=0$, we have $g'(t)=0\ge -4av(t)$ almost everywhere.
Hence $g'(t)\ge 2ag(t)-4av(t)$ holds almost everywhere on $[t_0,t_1]$. Thus
\[
 (e^{-2at}g(t))'\ge-4ae^{-2at}v(t).
\]
Integrating from $t$ to $t_1$, we have
\begin{equation}\label{eq:dirac-green-deficit-bound}
 g(t)\le e^{-2a(t_1-t)}g(t_1)
       +4a\int_t^{t_1} e^{-2a(s-t)}v(s)\,\mathrm{d} s.
\end{equation}
The first term has integral at most $1$ because $g(t_1)\le2a$. For the second,
Tonelli's theorem gives
\begin{align*}
 &4a\int_{t_0}^t\int_{\tau}^{t_1} e^{-2a(s-\tau)}v(s)\,\mathrm{d} s\,\mathrm{d} \tau\\
 &\quad=4a\int_{t_0}^{t_1} v(s)
                \int_{t_0}^{\min\{t,s\}}e^{-2a(s-\tau)}\,\mathrm{d} \tau\,\mathrm{d} s
 \le2\int_{t_0}^{t_1}v(s)\,\mathrm{d} s.
\end{align*}
Consequently
\[
 \int_{t_0}^t g(\tau)\,\mathrm{d} \tau\le1+2\int_{t_0}^{t_1}v(s)\,\mathrm{d} s.
\]
Since $\frac{f'(t)}{f(t)}\le g(t)-2a$, we obtain
\[
 \log\frac{f(t)}{f(t_0)}
  =\int_{t_0}^t \frac{f'(\tau)}{f(\tau)}\,\mathrm{d} \tau
  \le-2a(t-t_0)+1+2\int_{t_0}^{t_1}v(s)\,\mathrm{d} s.
\]
Exponentiating completes the proof.\end{proof}
We next establish a pointwise decay estimate under
the assumption of isolated simple blow-up.
\begin{lem}
\label{lem:local-green} 
Suppose that $\widetilde H_i\in C^1(B_2)$ satisfy \eqref{eq:euclidean-coefficient-bounds},
and that $\varphi_i$ are weak solutions of 
\[
D_{g_{{\mathbb R^n}}}\varphi_i
=\widetilde H_i|\varphi_i|^{2^*-2}\varphi_i
\qquad\text{in }B_2.
\]
Assume that $0$ is an isolated simple blow-up point of
$\{\varphi_i\}$, with the points $y_i\to0$ chosen in
Subsection~\ref{Blow-up rescaling near a concentration point}, and that, for some constant $A_3>0$,
\begin{equation}\label{isolation bound 2}
|y-y_i|^{\frac{n-1}{2}}|\varphi_i(y)|\le A_3,
\qquad y\in B_2\setminus\{y_i\}.
\end{equation}
Assume also that the corresponding spherical spinors
$\psi_i$ satisfy the energy bound \eqref{eq:first-energy-bound}.
Then there exists $C=C(n,A_1,A_2,A_3,\rho)>0$ such that,
for all sufficiently large $i$,
\[
|\varphi_i(y)|+|y-y_i|\,|\nabla\varphi_i(y)|
\le C|\varphi_i(y_i)|^{-1}|y-y_i|^{1-n},
\qquad 0<|y-y_i|\le1,
\]
where $\rho$ is the constant in Definition \ref{def:intro-simple}.
\end{lem}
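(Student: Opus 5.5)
The plan is to prove the estimate in two regimes and glue them. Write $M_i:=|\varphi_i(y_i)|=\lambda_i^{-\frac{n-1}{2}}$ and $r=|y-y_i|$.

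\emph{Inner region} $r\le R\lambda_i$. Here Theorem~\ref{thm:intro-threshold}(ii), in its sharper form from Lemma~\ref{lem:kelvin-compactness}, applies. The tail bound \eqref{eq:kelvin-tail} reads $|\Phi_i(z)|\le C(1+|z|)^{1-n}$ on all of $\mathbb R^n$. Rescaling back gives
\[
|\varphi_i(y)|=\lambda_i^{-\frac{n-1}{2}}|\Phi_i((y-y_i)/\lambda_i)|
\le C\lambda_i^{-\frac{n-1}{2}}\bigl(1+r/\lambda_i\bigr)^{1-n}.
\]
For $r\ge\lambda_i$ the right-hand side is at most $C\lambda_i^{\frac{n-1}{2}}r^{1-n}=CM_i^{-1}r^{1-n}$. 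For $r\le\lambda_i$ it is at most $CM_i\le CM_i^{-1}r^{1-n}$, since $r^{1-n}\ge\lambda_i^{1-n}=M_i^2$.

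This already covers the full range $0<r\le1$, as long as \eqref{eq:kelvin-tail} genuinely holds for all $z$ with $y_i+\lambda_iz\in B_2$. The one thing to check is that Lemma~\ref{lem:kelvin-compactness} is applicable: the Kelvin-transformed spinors must have small energy near $0$ uniformly in $i$. This follows from \eqref{eq:kelvin-energy-tail 1}, which comes from the energy bound \eqref{eq:first-energy-bound} via \eqref{total energy exhausts}. I would therefore take the pointwise part as essentially a restatement of \eqref{eq:kelvin-tail}.

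If one prefers an argument that uses isolated simplicity and the cylinder decay, it runs as follows. Pass to $U_i(t,\theta)$ via \eqref{eq:cyl-transform} centered at $y_i$. Set $f_i(t)=\int_{\mathbb S^{n-1}}|U_i|^2$, which is essentially $\overline w_i(e^t)$. Isolated simplicity gives $f_i'\le0$ for $t\in[\log(R\lambda_i),\log\rho]$. Using \eqref{eq:cyl-covariance}, \eqref{eq:cyl-decomposition} and the spectral gap \eqref{eq:cyl-gap}, one derives $f_i''\ge((n-1)^2-Cv_i)f_i$ with $v_i(t)=\sup_\theta|U_i|^{2^*-2}$. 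Then Lemma~\ref{exponential estimates} applies with $a=\frac{n-1}{2}$, provided $\int v_i\,dt$ is bounded uniformly. This uniform bound is the main obstacle, and it is obtained from the first argument itself: $v_i\le C(e^t/\lambda_i)^{-(n-1)(2^*-2)/2}$ in the neck, which is integrable.

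\emph{Gradient.} For $0<r\le1$, set $s=r/2$ and rescale: $\tilde\varphi(z)=\varphi_i(y+sz)$ on $B_1$. It satisfies $D\tilde\varphi=s\,\widetilde H_i|\varphi_i|^{2^*-2}\tilde\varphi$. By the isolation bound \eqref{isolation bound 2}, the potential $s|\varphi_i|^{2^*-2}\le sA_3^{2^*-2}(s)^{-1}C$ is uniformly bounded on $B_1$. The interior estimates from the preliminaries (the $W^{1,q}$ estimate with $q>n$, then Morrey) give
\[
\sup_{B_{1/2}}|\nabla\tilde\varphi|\le C\sup_{B_1}|\tilde\varphi|.
\]
Scaling back, $r|\nabla\varphi_i(y)|\le C\sup_{B_{r/2}(y)}|\varphi_i|\le CM_i^{-1}r^{1-n}$, using the pointwise bound on the annulus $r/2\le|\cdot-y_i|\le 3r/2$.

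For $r$ near $1$ the annulus can leave $B_1$, but it stays in $B_2$, where both bounds hold. The $C^1$ bound on $\widetilde H_i$ is not even needed for this $C^1$ step. Combining the two estimates proves the lemma.
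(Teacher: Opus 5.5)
Your pointwise estimate is correct, and it takes a genuinely different and much shorter route than the paper.

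\textbf{Comparison of the two routes.} You read the estimate off from \eqref{eq:kelvin-tail}, which holds on all of $\mathbb R^n$ with $C$ independent of $i$. Because the bubble exhausts the critical mass, the Kelvin transforms $\Phi_i^K$ have uniformly small energy near $0$, and $\varepsilon$-regularity bounds them there. Rescaling $|\Phi_i(z)|\le C|z|^{1-n}$ then gives $|\varphi_i(y)|\le C|\varphi_i(y_i)|^{-1}|y-y_i|^{1-n}$ at every scale, including $\rho e^{-h}\le|y-y_i|\le1$.

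The paper instead works on the cylinder centred at $y_i$:
\begin{itemize}
\item it uses isolated simplicity to get $f_i'<0$ on $[\log(R\lambda_i),\log\rho)$, and the spectral gap \eqref{eq:cyl-gap} to get $f_i''\ge((n-1)^2-2(n-1)v_i)f_i$;
\item it applies Lemma~\ref{exponential estimates} twice, first at a non-sharp rate with $v=0$ to make $\int v_i$ bounded, then at the sharp rate;
\item it converts slab $L^2$ bounds into $C^1$ bounds, and treats the outer annulus separately via the invertibility of $D_{g_0}$ and the vanishing of the energy away from $p$.
\end{itemize}

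Your route needs neither isolated simplicity nor the outer-annulus argument. It shows that the pointwise half of the lemma is already contained in Lemma~\ref{lem:kelvin-compactness}. Indeed, the bound $|\Phi_i(z)|\le C(1+|z|)^{1-n}$ that the paper re-derives from this lemma in Proposition~\ref{prop:second-order-obstruction} is exactly \eqref{eq:kelvin-tail}.

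What the paper's argument buys is that it gets the neck decay from isolated simplicity and the spectral gap alone. It does not use smallness of the global tail $\int_{|z|>R}|\Phi_i|^{2^*}$, and that mechanism is what survives beyond the single-bubble regime. Your second, cylinder-based sketch is not independent: it borrows the integrability of $v_i$ from the Kelvin bound, which already proves the estimate.

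\textbf{The gradient step fails as written.} The interior $W^{1,q}$ estimate followed by Morrey's embedding bounds $\tilde\varphi$ in $C^{0,\gamma}$. It does not bound $\sup|\nabla\tilde\varphi|$. A pointwise gradient bound needs a $W^{2,q}$ (or $C^{1,\alpha}$) estimate, hence uniform control of $D\tilde\varphi$ in $W^{1,q}$ (or $C^{0,\alpha}$). $L^\infty$ control of the coefficient does not provide this. This is exactly where the hypothesis $\|\nabla\widetilde H_i\|_{L^\infty(B_2)}\le 2A_2$ enters, so your closing remark that the $C^1$ bound on $\widetilde H_i$ is not needed is mistaken.

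To repair the step, write $D\tilde\varphi=s\,\widetilde H_i(y+s\,\cdot)\,N(\tilde\varphi)$ with $N(u)=|u|^{2^*-2}u$, and differentiate $N$ itself. It is $C^1$ in the real sense with $|dN(u)|\le(2^*-1)|u|^{2^*-2}$. By contrast, the potential $|u|^{2/(n-1)}$ is not Lipschitz near $u=0$ when $n>3$, so you should not differentiate it separately. Combine this with $s|\tilde\varphi|^{2^*-2}\le A_3^{2/(n-1)}$ on $B_1$ and the gradient bound on $\widetilde H_i$. This gives $\|D\tilde\varphi\|_{W^{1,q}(B_{3/4})}\le C\|\tilde\varphi\|_{W^{1,q}(B_{3/4})}\le C\sup_{B_1}|\tilde\varphi|$. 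The $W^{2,q}$ estimate with $q>n$ and Morrey then yield $\sup_{B_{1/2}}|\nabla\tilde\varphi|\le C\sup_{B_1}|\tilde\varphi|$, and the rest of your argument goes through unchanged.
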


\begin{proof}
  By decreasing $\rho$ if necessary, we may assume that
$0<\rho<1$.
  Since $y_i\to0$, we may assume that $|y_i|<1/2$.
Then $B_{3/2}(y_i)\subset B_2$.
Using the cylindrical transformation from Subsection~\ref{Conformal covariance and stereographic projection}
centred at $y_i$, define
\[
U_i(t,\theta)
=e^{\frac{n-1}{2}t}
\beta\bigl(\varphi_i(y_i+e^t\theta)\bigr),
\qquad
(t,\theta)\in
\left(-\infty,\log\frac32\right)\times\mathbb S^{n-1}.
\]
By conformal covariance and \eqref{isolation bound 2},
 \begin{equation}\label{eq:U_i}
D_{\mathrm{cyl}}U_i
=\widetilde H_i(y_i+e^t\theta)|U_i|^{2^*-2}U_i,
\qquad |U_i|\le A_3.
 \end{equation}
For $h>0$, write
\[
Q_h(t_*)=(t_*-h,t_*+h)\times\mathbb S^{n-1}.
\]
We first prove the following claim.

\smallskip
\noindent\textit{Claim.}
Fix $h>0$. For every slab
$Q_h(t_*)\subset(-\infty,\log(\frac{3}{2}))\times\mathbb S^{n-1}$,
we have
\begin{equation}\label{Local elliptic estimate}
\|U_i\|_{W^{2,2n}(Q_{h/2}(t_*))}
+\|U_i\|_{C^{1,1/2}(\overline{Q_{h/2}(t_*)})}
\le C_h\|U_i\|_{L^2(Q_h(t_*))},
\end{equation}
where $C_h$ depends only on $n,h,A_1,A_2,A_3$.

Indeed, \eqref{eq:U_i} gives
$|D_{\mathrm{cyl}}U_i|\le C|U_i|$.
Thus, by the interior elliptic estimate for
$D_{\mathrm{cyl}}$, for $0<s<S\leq h$ and $1<p<\infty$,
\begin{equation}\label{eq:dirac-green-linear-step}
\|U_i\|_{W^{1,p}(Q_s(t_*))}
\le C_{p,s,S}\|U_i\|_{L^p(Q_S(t_*))}.
\end{equation}
To specify every exponent and every domain, define
\begin{equation}\label{eq:dirac-green-ladder}
 p_j=\frac{2n}{n-j}\quad(0\le j\le n-1),\qquad
 h_j=h\left(1-\frac{j}{2(n+1)}\right)\quad(0\le j\le n+1).
\end{equation}
Thus $p_0=2$, $p_{n-2}=n$, $p_{n-1}=2n$, and $h_{n+1}=h/2$.
We claim inductively that
\begin{equation}\label{eq:dirac-green-ladder-induction}
 \left\lVert U_i\right\rVert_{L^{p_j}( Q_{h_j}(t_*))}\le C_j \left\lVert U_i\right\rVert_{L^{2}( Q_{h}(t_*))},
 \qquad 0\le j\le n-1.
\end{equation}
The case $j=0$ is obvious.  Suppose that \eqref{eq:dirac-green-ladder-induction}
holds for some $j\in\{0,\ldots,n-2\}$.
We prove it for $j+1$. \eqref{eq:dirac-green-linear-step} gives
\begin{equation}\label{eq:dirac-green-ladder-W1}
 \left\lVert U_i\right\rVert_{W^{1,p_j}( Q_{h_{j+1}}(t_*))}\le C \left\lVert U_i\right\rVert_{L^{p_j}( Q_{h_j}(t_*))}\leq C \left\lVert U_i\right\rVert_{L^{2}( Q_{h}(t_*))}.
\end{equation}
When $j<n-2$, $p_{j+1}<p_j^*:=\frac{np_j}{n-p_j}$. Hence H\"older
inequality and Sobolev embedding imply that \[\left\lVert U_i\right\rVert_{L^{p_j+1}( Q_{h_{j+1}}(t_*))}\leq C\left\lVert U_i\right\rVert_{L^{p_j^*}( Q_{h_{j+1}}(t_*))}\leq C\left\lVert U_i\right\rVert_{W^{1,p_j}( Q_{h_{j+1}}(t_*))}\leq C \left\lVert U_i\right\rVert_{L^{2}( Q_{h}(t_*))}.\] 
At the last step $j=n-2$, we use the Sobolev embedding
\(
 W^{1,n}( Q_{h_{n-1}}(t_*))
 \hookrightarrow L^{2n}( Q_{h_{n-1}}(t_*)).
\)
This proves that
\[
 \left\lVert U_i\right\rVert_{L^{2n}( Q_{h_{n-1}}(t_*))}\le C\left\lVert U_i\right\rVert_{L^{2}( Q_{h}(t_*))}.
\]Applying the interior elliptic estimate once more,
with $p=2n$, yields
\[
\|U_i\|_{W^{1,2n}(Q_{h_n}(t_*))}
\le C\|U_i\|_{L^{2n}(Q_{h_{n-1}}(t_*))}
\le C\|U_i\|_{L^2(Q_h(t_*))}.
\]
Since $2^*-2>0$, the map $z\mapsto |z|^{2^*-2}z$ is continuously
differentiable in the real sense.
Thus, \eqref{eq:U_i} gives
\[
\bigl|\nabla(|U_i|^{2^*-2}U_i)\bigr|
\le (2^*-1)A_3^{2^*-2}|\nabla U_i|
\qquad\text{a.e.}
\]
Using the equation and the uniform $C^1$ bounds for
$\widetilde H_i(y_i+e^t\theta)$ on $Q_h(t_*)$, we obtain
\[
\|D_{\mathrm{cyl}}U_i\|_{W^{1,2n}(Q_{h_n}(t_*))}
\le C\|U_i\|_{W^{1,2n}(Q_{h_n}(t_*))}
\le C\|U_i\|_{L^2(Q_h(t_*))}.
\]
Since $h_{n+1}=h/2<h_n$, a further interior elliptic
estimate, followed by the Morrey embedding
$W^{2,2n}\hookrightarrow C^{1,1/2}$, yields
\[
\|U_i\|_{W^{2,2n}(Q_{h/2}(t_*))}
+
\|U_i\|_{C^{1,1/2}(\overline{Q_{h/2}(t_*)})}
\le C_h\|U_i\|_{L^2(Q_h(t_*))}.
\]
In particular, for every such slab $Q_h(t)$, we have
\begin{equation}\label{local-estimate}
\|U_i\|_{C^1(\overline{Q_{h/2}(t)})}
\le C_h\|U_i\|_{L^2(Q_h(t))}=C_h
\left(
\int_{t-h}^{t+h}
\|U_i(s,\cdot)\|_{L^2(\mathbb S^{n-1})}^2\,ds
\right)^{1/2}.
\end{equation}
We now use this estimate to study the decay of $U_i$.
By \eqref{local-estimate}, we have $U_i\in C^1_{\mathrm{loc}}$. Define
\begin{equation}\label{eq:dirac-green-f-w}
f_i(t)=\int_{\mathbb S^{n-1}}|U_i(t,\theta)|^2\,d\theta.
\end{equation}
Since $\beta$ is fibrewise isometric,
\begin{equation}\label{eq:cyl-f-w}
f_i(t)
=e^{(n-1)t}\int_{\mathbb S^{n-1}}
|\varphi_i(y_i+e^t\theta)|^2\,d\theta
=|\mathbb S^{n-1}|\,\bar w_i(e^t).
\end{equation}
Set
\[
V_i(t,\theta)
=\widetilde H_i(y_i+e^t\theta)|U_i(t,\theta)|^{2^*-2}.
\]
Equation~\eqref{eq:U_i} and the product decomposition give
\[
\nabla_{\partial_t}U_i
=\partial_t\cdot D_{\mathrm{vert}}U_i
-\partial_t\cdot(V_iU_i).
\]
Since $V_i$ is real-valued and Clifford multiplication
by $\partial_t$ is skew-adjoint,
\[
f_i'(t)
=2(\nabla_{\partial_t}U_i,U_i)_{L^2}
=2(\partial_t\cdot D_{\mathrm{vert}}U_i,U_i)_{L^2}.
\]
The product structure gives
\[
(\partial_t\cdot D_{\mathrm{vert}})^*
=-D_{\mathrm{vert}}(\partial_t\cdot)
=\partial_t\cdot D_{\mathrm{vert}}.
\]
Using this self-adjointness, the independence of the operator
from $t$, and the preceding $C^1$ regularity, we obtain
\[
\begin{aligned}
\frac{f_i'(t+\tau)-f_i'(t)}{\tau}
&=
2\left(
\partial_t\cdot D_{\mathrm{vert}}
\bigl(U_i(t+\tau)+U_i(t)\bigr),
\frac{U_i(t+\tau)-U_i(t)}{\tau}
\right)_{L^2}\\
&\longrightarrow
4(\partial_t\cdot D_{\mathrm{vert}}U_i(t),
\nabla_{\partial_t}U_i(t))_{L^2}
\qquad\text{as }\tau\to0.
\end{aligned}
\]
Substituting the expression for $\nabla_{\partial_t}U_i$
therefore yields
\begin{equation}\label{eq:cyl-f-second}
f_i''(t)
=4\|D_{\mathrm{vert}}U_i\|_{L^2}^2
-4(D_{\mathrm{vert}}U_i,V_iU_i)_{L^2}.
\end{equation}
The right-hand side is continuous, so
$f_i\in C^2_{\mathrm{loc}}$.

We next use the limiting bubble to obtain initial bounds for $f_i$.

By \eqref{eq:cyl-f-w} and the simple blow-up assumption, $f_i$ has exactly
one critical point in $(-\infty,\log\rho)$.
By Lemma~\ref{lem:rescaled-limit}, after passing to a subsequence,
$\Phi_i\to\Phi$ in $C^{0,\gamma}_{\mathrm{loc}}(\mathbb R^n)$
for every $0<\gamma<1$. Applying the local elliptic argument used to prove \eqref{Local elliptic estimate} to the rescaled
equation, we obtain uniform $W^{2,2n}_{\mathrm{loc}}$ bounds for $\Phi_i$.
By compact Sobolev embedding, 
$\Phi_i\to\Phi$ in $C^1_{\mathrm{loc}}(\mathbb R^n)$. By Corollary~\ref{cor:first-bubble},
\[
|\Phi(y)|=(1+c^2|y|^2)^{-\frac{n-1}{2}},
\qquad c=\frac{H(p)}{n}.
\]
The definitions of $U_i$ and $\Phi_i$, together with the
fibrewise isometry of $\beta$, yield
\[
f_i(\log(\lambda_i r))
=r^{n-1}\int_{\mathbb S^{n-1}}|\Phi_i(r\theta)|^2\,d\theta
\longrightarrow
|\mathbb S^{n-1}|
\left(\frac{r}{1+c^2r^2}\right)^{n-1}
\]
in $C^1_{\mathrm{loc}}((0,\infty))$.

The limit is strictly decreasing for $r>1/c$.
Choose a large fixed $R$ with $cR>1$ and set
$t_i=\log(R\lambda_i)$.
For all sufficiently large $i$, we then have $t_i<\log\rho$ and
\[
0<f_i(t_i)\le CR^{-(n-1)},
\qquad f_i'(t_i)<0,
\]where $C$ is independent of $R$ and $i$.

For each fixed $i$, $\lim\limits_{t\to-\infty}f_i(t)=0$.
Together with the preceding inequalities, this implies that
$f_i$ has a critical point in $(-\infty,t_i)$.
By uniqueness of the critical point,
\[
f_i'(t)<0,\qquad f_i(t)>0,
\qquad t_i\le t<\log\rho.
\]
For $[t-h,t+h]\subset[t_i,\log\rho)$, monotonicity gives
\[
\|U_i\|_{L^2(Q_h(t))}^2
=\int_{t-h}^{t+h}f_i(s)\,ds
\le 2h\,f_i(t-h).
\]
Combining this with \eqref{Local elliptic estimate}, we obtain.
\begin{equation}\label{f_i bounded}
\|U_i\|_{C^1(\overline{Q_{h/2}(t)})}
\le C_h f_i(t-h)^{1/2}.
\end{equation}
We next estimate $V_i(t,\theta)=\widetilde H_i(y_i+e^t\theta)
|U_i(t,\theta)|^{2^*-2}$. Fix $h>0$ and set
\[
v_i(t)=\|V_i(t,\cdot)\|_{L^\infty(\mathbb S^{n-1})}.
\]
By \eqref{eq:euclidean-coefficient-bounds}, \eqref{f_i bounded} and monotonicity,
\[
\begin{aligned}
v_i(t)
&\le A_1
\|U_i(t,\cdot)\|_{L^\infty(\mathbb S^{n-1})}^{2/(n-1)}
\le C_h f_i(t-h)^{1/(n-1)}\\
&\le C_h f_i(t_i)^{1/(n-1)}
\le \frac{C_h}{R},
\qquad t_i+h\le t\le\log\rho-h.
\end{aligned}
\]
Enlarge the fixed $R$ so that $\frac{C_h}{R}\le \frac{n-1}{4}$,
and then take $i$ sufficiently large that
$t_i+3h<\log\rho$.
\\
The identity for $f_i''$, the Cauchy--Schwarz inequality
and the spectral gap give, on $[t_i+h,\log\rho-h]$,
\[
\begin{aligned}
f_i''(t)
&\ge 4\|D_{\mathrm{vert}}U_i\|_2^2
-4v_i(t)\|D_{\mathrm{vert}}U_i\|_2 f_i(t)^{1/2}\\
&\ge 4\left(1-\frac{2v_i(t)}{n-1}\right)
\|D_{\mathrm{vert}}U_i\|_2^2\\
&\ge \bigl((n-1)^2-2(n-1)v_i(t)\bigr)f_i(t)
\ge \frac{(n-1)^2}{2}f_i(t).
\end{aligned}
\]
Applying Lemma~\ref{exponential estimates} with
$a=\frac{n-1}{2\sqrt{2}}$ and $v=0$, and using
$f_i(t_i+h)\le f_i(t_i)$, we obtain
\[
f_i(t)\le
CR^{-(n-1)}
\exp\left(-\frac{n-1}{\sqrt{2}}(t-t_i-h)\right),
\qquad t_i+h\le t\le\log\rho-h.
\]
This decay gives a uniform bound for the integral of $v_i$.
Indeed, for $t_i+2h\le t\le\log\rho-h$,
\[
v_i(t)
\le C_h f_i(t-h)^{1/(n-1)}
\le \frac{C_h}{R}
\exp\left(-\frac{t-t_i-2h}{\sqrt{2}}\right).
\]
Together with $v_i\le \frac{C_h}{R}$ on $[t_i+h,t_i+2h]$,
this yields
\begin{equation}\label{nonlinear estimation}
\begin{aligned}
\int_{t_i+h}^{\log\rho-h}v_i(t)\,dt
&\le \frac{C_hh}{R}
+\frac{C_h}{R}
\int_{t_i+2h}^{\log\rho-h}
\exp\left(-\frac{t-t_i-2h}{\sqrt{2}}\right)\,dt\\
&\le \frac{C_h}{R}
\left(h+\int_0^\infty e^{-s/\sqrt{2}}\,ds\right)
\le \frac{C_h}{R}.
\end{aligned}
\end{equation}
Using
\(
f_i''\ge \bigl((n-1)^2-2(n-1)v_i\bigr)f_i,
\) we apply Lemma~\ref{exponential estimates} again  with $a=\frac{n-1}{2}$ and $v=v_i$.
\eqref{nonlinear estimation} gives
\[
\begin{aligned}
f_i(t)
&\le
\exp\left(1+2\int_{t_i+h}^{\log\rho-h}v_i(s)\,ds\right)
f_i(t_i+h)e^{-(n-1)(t-t_i-h)}\\
&\le C_hR^{-(n-1)}e^{-(n-1)(t-t_i-h)}.
\end{aligned}
\]
Since $t_i=\log(R\lambda_i)$ and
$\lambda_i^{n-1}=|\varphi_i(y_i)|^{-2}$, we obtain
\[
f_i(t)\le
C_h|\varphi_i(y_i)|^{-2}e^{-(n-1)t},
\qquad t_i+h\le t\le\log\rho-h.
\]
Combining this with \eqref{f_i bounded} yields
\[
\|U_i\|_{C^1(\overline{Q_{h/2}(t)})}
\le C_h f_i(t-h)^{1/2}
\le C_h|\varphi_i(y_i)|^{-1}e^{-\frac{n-1}{2}t},
\qquad t_i+2h\le t\le\log\rho-h.
\]
Write $y=y_i+r\theta$, where $r=e^t=|y-y_i|$.
By the definition of $U_i$,
\[
\varphi_i(y)
=r^{-\frac{n-1}{2}}
\beta^{-1}\bigl(U_i(\log r,\theta)\bigr).
\]
The conformal transformation formula for the spin connection
therefore gives
\[
\begin{aligned}
|\varphi_i(y)|+r|\nabla\varphi_i(y)|
&\le C_n r^{-\frac{n-1}{2}}
\left(
|U_i(\log r,\theta)|
+|\nabla^{\mathrm{cyl}}U_i(\log r,\theta)|
\right)\\
&\le C_h|\varphi_i(y_i)|^{-1}r^{1-n},
\end{aligned}
\]
whenever
\[
e^{2h}R\lambda_i\le r\le\rho e^{-h}.
\]

It remains to consider $0<r\le e^{2h}R\lambda_i$.
The uniform $C^1$ bounds for $\Phi_i$ on the fixed ball
$B_{e^{2h}R}$ give
\[
|\varphi_i(y)|+r|\nabla\varphi_i(y)|
\le C_{h,R}\lambda_i^{-\frac{n-1}{2}}.
\]
Since $r\le e^{2h}R\lambda_i$,
\[
\lambda_i^{-\frac{n-1}{2}}
\le (e^{2h}R)^{n-1}
\lambda_i^{\frac{n-1}{2}}r^{1-n}
=(e^{2h}R)^{n-1}
|\varphi_i(y_i)|^{-1}r^{1-n}.
\]
Combining the two regions, with $h$ and $R$ now fixed,
we conclude that
\begin{equation}\label{varphi_i inner}
|\varphi_i(y)|+|y-y_i||\nabla\varphi_i(y)|
\le C|\varphi_i(y_i)|^{-1}|y-y_i|^{1-n},
\qquad 0<|y-y_i|\le\rho e^{-h}.
\end{equation}

We finally extend the estimate to $0<|y-y_i|\le1$.
Set $r_0=\rho e^{-h}$. 
Choose $\chi\in C^\infty(\mathbb S^n)$, with $0\le\chi\le1$,
such that
\[
\chi=0\quad\text{on }\pi_p^{-1}(B_{r_0/4}(0)),
\qquad
\chi=1\quad\text{on }
\mathbb S^n\setminus\pi_p^{-1}(B_{r_0/2}(0)).
\]
For all sufficiently large $i$, $|y_i|<r_0/8$, and hence
\[
\frac{r_0}{8}
\le|\pi_p(x)-y_i|
\le\frac{5r_0}{8}<r_0,
\qquad x\in\operatorname{supp}(\nabla\chi).
\]Since $\chi$ is fixed, \eqref{varphi_i inner}
and the conformal identification give
\[
\|\nabla\chi\cdot\psi_i\|_
{L^{\frac{2n}{n+1}}(\mathbb S^n)}
\le C|\varphi_i(y_i)|^{-1}.
\]
Since $p\notin\operatorname{supp}\chi$, Theorem~\ref{thm:intro-threshold} and the
uniform bound on $H_i$ imply
\[
\|H_i|\psi_i|^{2^*-2}\|_
{L^n(\operatorname{supp}\chi)}
\le C\left(
\int_{\operatorname{supp}\chi}
|\psi_i|^{2^*}\,dv_{g_0}
\right)^{1/n}
=o(1).
\]
Using
the invertibility of $D_{g_0}$, the global elliptic estimate
and Sobolev embedding yield
\[
\begin{aligned}
\|\chi\psi_i\|_{L^{2^*}(\mathbb S^n)}
&\le C\|D_{g_0}(\chi\psi_i)\|_
{L^{\frac{2n}{n+1}}(\mathbb S^n)}\\
&\le
C\bigl\|H_i|\psi_i|^{2^*-2}\bigr\|_
{L^n(\operatorname{supp}\chi)}
\|\chi\psi_i\|_{L^{2^*}(\mathbb S^n)}
+C|\varphi_i(y_i)|^{-1}.
\end{aligned}
\]
Absorbing the first term on the right, we obtain
\[
\|\chi\psi_i\|_{L^{2^*}(\mathbb S^n)}
\le C|\varphi_i(y_i)|^{-1}.
\]

On the enlarged annulus
\[
\left\{y:\frac{3r_0}{4}<|y-y_i|<\frac54\right\}
\subset B_2,
\]
we have, for all sufficiently large $i$,
\[
\chi\circ\pi_p^{-1}=1,
\qquad
|\varphi_i|
\le A_3\left(\frac{3r_0}{4}\right)^{-\frac{n-1}{2}}.
\]
Repeating the local elliptic argument above, and using
H\"older's inequality and conformal invariance of the
$L^{2^*}$ norm, we obtain
\[
\begin{aligned}
\sup_{r_0\le|y-y_i|\le1}
\bigl(|\varphi_i(y)|+|\nabla\varphi_i(y)|\bigr)
&\le C\|\varphi_i\|_
{L^2(\{3r_0/4<|y-y_i|<5/4\})}\\
&\le C\|\chi\psi_i\|_{L^{2^*}(\mathbb S^n)}\\
&\le C|\varphi_i(y_i)|^{-1}.
\end{aligned}
\]
Since $|y-y_i|\le1$, this implies
\[
|\varphi_i(y)|+|y-y_i||\nabla\varphi_i(y)|
\le C|\varphi_i(y_i)|^{-1}|y-y_i|^{1-n},
\qquad r_0\le|y-y_i|\le1.
\]
Combining this with the estimate for $0<|y-y_i|\le r_0$
completes the proof.
\end{proof}
The preceding decay estimate allows us to identify the limit
of $|\varphi_i(y_i)|\varphi_i(y)$ away from the origin.
\subsection{Green convergence}\label{Green convergence}
\begin{lem}
\label{lem:green-conv-local}
Under the assumptions of Lemma~\ref{lem:local-green}, after passing to a
subsequence, we have
\[
|\varphi_i(y_i)|\varphi_i(y)\to -\frac1{\omega_{n-1}}\frac{y}{|y|^n}\cdot\eta
\quad
\text{in }C^1_{\mathrm{loc}}(B_1\setminus\{0\}),
\]
where \[
\eta=\omega_{n-1}
\left(\frac{n}{H(p)}\right)^{n-1}\Phi(0)\ne0,
\]
$\Phi$ is the rescaled limit in Lemma~\ref{lem:rescaled-limit}.
\end{lem}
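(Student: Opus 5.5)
Set $\Theta_i(y):=|\varphi_i(y_i)|\varphi_i(y)$. The plan is to obtain compactness of $\Theta_i$ from Lemma~\ref{lem:local-green}, show that every limit is harmonic for the Dirac operator with an isolated singularity at $0$, identify the singularity as a multiple of $\Gamma$ by a flux computation, and fix the constant spinor $\eta$ using the bubble through Lemma~\ref{lem:kelvin-compactness}.

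\emph{Step 1: compactness.} By Lemma~\ref{lem:local-green}, on every compact $K\subset B_1\setminus\{0\}$ we have $|y-y_i|\ge c_K>0$ for large $i$, so $|\Theta_i|+|\nabla\Theta_i|\le C_K$. The equation reads
\[
D_{g_{\mathbb R^n}}\Theta_i=\widetilde H_i|\varphi_i|^{2^*-2}\Theta_i .
\]
Its coefficient satisfies $|\widetilde H_i|\varphi_i|^{2^*-2}|\le C|\varphi_i(y_i)|^{-(2^*-2)}\to0$ in $C^0(K)$. With the uniform $C^1$ bounds on $\widetilde H_i$ and the chain rule, the coefficient is also bounded in $C^{0,1/2}(K)$, using the $C^1$ bound on $\varphi_i$ on $K$. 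Interior Schauder estimates then give $C^{1,1/2}$ bounds for $\Theta_i$ on $K$. By Arzel\`a--Ascoli and a diagonal argument, a subsequence converges in $C^1_{\mathrm{loc}}(B_1\setminus\{0\})$ to some $\Theta$ with $D\Theta=0$ in $B_1\setminus\{0\}$ and $|\Theta(y)|\le C|y|^{1-n}$.

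\emph{Step 2: form of the limit.} Write $\Theta=-\frac1{\omega_{n-1}}\frac{y}{|y|^n}\cdot\eta+\Theta_{\mathrm{reg}}$ and identify $\eta$ as a flux. Set $\eta:=-\int_{\partial B_r}\nu\cdot\Theta\,d\sigma$. This is independent of $r$ by the divergence identity $\int_{\partial\Omega}\nu\cdot u=\int_\Omega Du$ for $Du=0$, and for $\Gamma\eta$ it returns $\eta$ because $\nu\cdot\nu\cdot=-1$. Then $\Theta-\Gamma\eta$ is Dirac-harmonic on $B_1\setminus\{0\}$, is $O(|y|^{1-n})$, and has zero flux. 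I expect this removable-singularity step to be the main obstacle. I would expand in $\mathbb S^{n-1}$-eigenmodes in the cylindrical picture, using the spectrum of $D_{\mathrm{vert}}$. The bound $|\Theta|\le C|y|^{1-n}$ excludes every mode more singular than the lowest negative one, $|y|^{1-n}$. That lowest mode consists exactly of the spinors $\frac{y}{|y|^n}\cdot\eta_0$ and is detected by the flux. Hence the remainder is regular at $0$. (Alternatively, estimate $\Theta-\Gamma\eta$ distributionally across $0$ with a cutoff: the error is $\varepsilon^{-1}\int_{B_{2\varepsilon}}|\Theta|$, which is bounded rather than vanishing, so the flux subtraction is needed; a Taylor argument shows the remainder is $o(1)$.)

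\emph{Step 3: identifying $\eta$, and the regular part.} Compute the flux before the limit. Since $\nu\cdot\nu\cdot=-1$,
\[
\int_{\partial B_r(y_i)}\nu\cdot\Theta_i\,d\sigma
=\int_{B_r(y_i)}D\Theta_i
=|\varphi_i(y_i)|\int_{B_r(y_i)}\widetilde H_i|\varphi_i|^{2^*-2}\varphi_i .
\]
Change variables $y=y_i+\lambda_iz$; the right side becomes
\[
|\varphi_i(y_i)|\,\lambda_i^{\frac{n+1}{2}}\int\widetilde H_i|\Phi_i|^{2^*-2}\Phi_i\,dz .
\]
Now $|\varphi_i(y_i)|\lambda_i^{(n+1)/2}=\lambda_i$, not $1$. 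So the flux normalization must instead come from comparing with Lemma~\ref{lem:kelvin-compactness}. Take $\mu_i=r$ fixed in \eqref{eq:kelvin-second-scale}. Since $|\varphi_i(y_i)|=\lambda_i^{-(n-1)/2}$, we get
\[
\Theta_i(y_i+rz)=\left(\frac{r}{\lambda_i}\right)^{n-1}\Phi_i\!\left(\frac{r}{\lambda_i}z\right)r^{1-n}.
\]
Its limit is $-(n/H(p))^{n-1}r^{1-n}\frac{z}{|z|^n}\cdot\Phi(0)$ near $|z|=1$. This is the singular term with $\eta=\omega_{n-1}(n/H(p))^{n-1}\Phi(0)$, and $\eta\neq0$ since $|\Phi(0)|=1$. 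The convergence is not uniform in $r$, so I only use it to read off the flux: the flux is $r$-independent for the limit and computable at small $r$, where the Kelvin limit dominates. Finally, the regular part must vanish. Lemma~\ref{lem:kelvin-compactness} gives the exact homogeneous form of the limit for every fixed scale, and $\Theta$ is independent of the scale $r$. Together these force $\Theta(y)=-\frac1{\omega_{n-1}}\frac{y}{|y|^n}\cdot\eta$ on $B_1\setminus\{0\}$. Uniqueness of the limit gives convergence of the chosen subsequence in $C^1_{\mathrm{loc}}$.
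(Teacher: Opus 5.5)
Your proof is correct, and its decisive step is the same as the paper's final step. You get uniform $C^{1,\alpha}$ bounds for $\Theta_i$ on compact subsets of $B_1\setminus\{0\}$ from Lemma~\ref{lem:local-green}, and then identify the limit through the Kelvin second-scale limit \eqref{eq:kelvin-second-scale} at a fixed scale.

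A single scale already suffices, and there is no uniformity-in-$r$ issue. With $\mu_i=1$, \eqref{eq:kelvin-second-scale} gives $\Theta_i(y_i+y)=\lambda_i^{1-n}\Phi_i(y/\lambda_i)\to\Gamma(y)\eta$ in $C^0_{\mathrm{loc}}(\mathbb R^n\setminus\{0\})$. This holds on every compact set of $y$, not only near one sphere. Since $y_i\to0$, you also have $\Theta_i(y_i+y)\to\Theta(y)$, so $\Theta=\Gamma\eta$ on $B_1\setminus\{0\}$ at once.

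The two arguments differ only in how the singularity is handled before that comparison. The paper shows $D\zeta=\eta\,\delta_0$ in $\mathcal D'(B_1)$ by splitting the weak formulation at $B_{R\lambda_i}(y_i)$: the bubble part gives $\langle\eta,\phi(0)\rangle$, and the rest is $O(1/R)$ by the sharp decay. Hence $\zeta-\Gamma\eta$ is smooth, and only then does the paper compare with the Kelvin limit. You propose a flux and Laurent-mode (removable singularity) argument instead. Both preliminary steps are logically redundant once the Kelvin comparison is made, so your Step~2 and the flux discussion can simply be dropped.

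Three slips should still be corrected.
\begin{itemize}
\item In Step~3 you lost the Jacobian. One has $|\varphi_i(y_i)|\,|\varphi_i|^{2^*-2}\varphi_i\,dy=|\varphi_i(y_i)|\lambda_i^{\,n-\frac{n+1}{2}}|\Phi_i|^{2^*-2}\Phi_i\,dz$, and $|\varphi_i(y_i)|\lambda_i^{\frac{n-1}{2}}=1$. So the direct flux computation does work. With the tail bound \eqref{eq:kelvin-tail} for dominated convergence, it gives exactly $H(p)\int_{\mathbb R^n}|\Phi|^{2^*-2}\Phi\,dz=\eta$, which is precisely the paper's first-integral computation.
\item Since $\nu\cdot\nu\cdot=-1$, one has $\int_{\partial B_r}\nu\cdot\Gamma\eta\,d\sigma=+\eta$. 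Your flux should therefore be defined without the minus sign.
\item The coefficient $|\varphi_i|^{2^*-2}$ is only $C^{0,2/(n-1)}$, which is weaker than $C^{0,1/2}$ when $n\ge6$. Any positive H\"older exponent suffices for the Schauder step, as does using the $C^1$ regularity of $z\mapsto|z|^{2^*-2}z$, as the paper does.
\end{itemize}
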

\begin{proof}Set $\zeta_i(y):=|\varphi_i(y_i)|\varphi_i(y)$. Then
\begin{equation}\label{eq:green-normalized}
D_{g_{\mathbb R^n}}\zeta_i
=
\widetilde H_i(y)|\varphi_i(y_i)|^{-(2^*-2)}
|\zeta_i|^{2^*-2}\zeta_i
\quad\text{in }B_2.
\end{equation}
By Lemma~\ref{lem:local-green}, \eqref{eq:green-normalized} and interior elliptic estimates,
$\zeta_i$ are uniformly bounded in
$W^{2,2n}_{\mathrm{loc}}(B_1\setminus\{0\})$.
Thus, after passing to a subsequence,
\begin{equation}\label{local convergence away 0}
\zeta_i\longrightarrow\zeta
\quad\text{in }C^1_{\mathrm{loc}}(B_1\setminus\{0\}).
\end{equation}
Since $|\varphi_i(y_i)|\to\infty$, passing to the limit
in \eqref{eq:green-normalized} yields
\[
D_{g_{\mathbb R^n}}\zeta=0
\quad\text{in }B_1\setminus\{0\}.
\]We now identify the singularity of $\zeta$ at the origin. By Lemma~\ref{lem:local-green}, for every sufficiently small $\varepsilon>0$
and all sufficiently large $i$,
\[
\int_{B_\varepsilon}|\zeta_i|\,dy
\le C\int_{B_{2\varepsilon}(y_i)}
|y-y_i|^{1-n}\,dy
\le C\varepsilon.
\]
Together with \eqref{local convergence away 0},
this implies
\begin{equation}\label{L^1 convergence}
\zeta_i\longrightarrow\zeta
\quad\text{in }L^1_{\mathrm{loc}}(B_1).
\end{equation}
Let $\phi\in C_c^\infty(B_1,\Sigma\mathbb R^n)$.
By the formal self-adjointness of $D_{g_{\mathbb{R}^n}}$,
\begin{equation}\label{eq:green-weak}
\int_{B_1}\langle\zeta_i,D_{g_{\mathbb R^n}}\phi\rangle\,dy
=
|\varphi_i(y_i)|
\int_{B_1}\widetilde H_i
\langle|\varphi_i|^{2^*-2}\varphi_i,\phi\rangle\,dy.
\end{equation}
Fix $R>0$. For sufficiently large $i$, we split the
right-hand side into integrals over $B_{R\lambda_i}(y_i)$
and $B_1\setminus B_{R\lambda_i}(y_i)$. 

For the first integral, changing variables
$y=y_i+\lambda_i z$ and using Lemma~\ref{lem:rescaled-limit}, we obtain
\begin{equation}\label{first integral}
\begin{aligned}
|\varphi_i(y_i)|
\int_{B_{R\lambda_i}(y_i)}
\widetilde H_i(y)
\langle|\varphi_i|^{2^*-2}\varphi_i,\phi(y)\rangle\,dy&=
\int_{B_R}\widetilde H_i(y_i+\lambda_i z)
\left\langle
|\Phi_i(z)|^{2^*-2}\Phi_i(z),
\phi(y_i+\lambda_i z)
\right\rangle\,dz\\
\quad\longrightarrow &
H(p)\int_{B_R}
\left\langle
|\Phi(z)|^{2^*-2}\Phi(z),\phi(0)
\right\rangle\,dz.
\end{aligned}
\end{equation}
By Corollary~\ref{cor:first-bubble}, letting $R\to\infty$ gives
$\langle\eta,\phi(0)\rangle$, where, by the oddness
of the Clifford term,
\[
\begin{aligned}
\eta
:=H(p)\int_{\mathbb R^n}|\Phi|^{2^*-2}\Phi\,dz &=H(p)\left[
\int_{\mathbb R^n}
\left(1+\frac{H(p)^2}{n^2}|z|^2\right)^{-\frac{n+2}{2}}
\,dz
\right]\Phi(0)\\
&=\omega_{n-1}
\left(\frac{n}{H(p)}\right)^{n-1}\Phi(0)\ne0,
\end{aligned}
\]
since $H(p)>0$ and $|\Phi(0)|=1$.

For the second integral, we have
\begin{equation}\label{second integral}
\begin{aligned}
|\varphi_i(y_i)|
\left|
\int_{B_1\setminus B_{R\lambda_i}(y_i)}
\widetilde H_i
\langle|\varphi_i|^{2^*-2}\varphi_i,\phi\rangle\,dy
\right|&\le
C\|\phi\|_{L^\infty}|\varphi_i(y_i)|
\int_{\operatorname{supp}\phi\setminus B_{R\lambda_i}(y_i)}
|\varphi_i|^{2^*-1}\,dy\\
&\le
C\|\phi\|_{L^\infty}\lambda_i
\int_{R\lambda_i}^{1}r^{-2}\,dr
\le \frac{C\|\phi\|_{L^\infty}}{R},
\end{aligned}
\end{equation}
where the penultimate inequality follows from Lemma~\ref{lem:local-green}
and $\lambda_i=|\varphi_i(y_i)|^{-(2^*-2)}$ and
$\operatorname{supp}\phi\subset B_1(y_i)$ for all sufficiently
large $i$.

Combining \eqref{L^1 convergence}, \eqref{first integral}, \eqref{second integral},
we let first $i\to\infty$ and then $R\to\infty$
in \eqref{eq:green-weak} to obtain
\[
\int_{B_1}
\langle\zeta,D_{g_{\mathbb{R}^n}}\phi\rangle\,dy
=
\langle\eta,\phi(0)\rangle.
\]
Thus,
\[
D_{g_{\mathbb R^n}}\zeta=\eta\delta_0
\quad\text{in }\mathcal D'(B_1).
\]By \cite{BE2006}, the Euclidean Dirac
fundamental solution
\[
\Gamma(y)\eta
=-\frac{1}{\omega_{n-1}}\frac{y}{|y|^n}\cdot\eta
\]
satisfies
\[
D_{g_{\mathbb R^n}}(\Gamma\eta)=\eta\delta_0
\quad\text{in }\mathcal D'(\mathbb R^n).
\]
Thus $h:=\zeta-\Gamma\eta$ satisfies
$D_{g_{\mathbb{R}^n}}h=0$ distributionally in $B_1$.
Elliptic regularity gives
$h\in C^\infty(B_1,\Sigma\mathbb R^n)$.
On the other hand, taking $\mu_i=1$ in \eqref{eq:kelvin-second-scale}, we obtain
\[
|\varphi_i(y_i)|\varphi_i(y_i+y)
=\lambda_i^{1-n}\Phi_i\left(\frac{y}{\lambda_i}\right)\longrightarrow
-\left(\frac{n}{H(p)}\right)^{n-1}
\frac{y}{|y|^n}\cdot\Phi(0)
=\Gamma(y)\eta
\]
in $C^0_{\mathrm{loc}}(\mathbb R^n\setminus\{0\})$. Since $\zeta_i\to\zeta$ locally uniformly in
$B_1\setminus\{0\}$ and $y_i\to0$, we also have
\[
|\varphi_i(y_i)|\varphi_i(y_i+y)
=\zeta_i(y_i+y)
\longrightarrow
\zeta(y)=\Gamma(y)\eta+h(y).
\]
Comparing the two limits gives $h=0$ in
$B_1\setminus\{0\}$. By continuity, $h\equiv0$ in $B_1$.
This completes the proof.
\end{proof}
\subsection{Pohozaev identities and compactness}

We combine the sharp decay estimate and the Green convergence
with the Pohozaev identity to obtain a second-order condition
at the blow-up point.

\begin{prop}\label{prop:second-order-obstruction}
Assume the hypotheses of
Theorem~\ref{thm:intro-threshold}, and let $p$ be the
blow-up point obtained there.
Suppose that
\(
H_i\longrightarrow H\) in \(
C^2(U)
\)
for some neighborhood $U$ of $p$.
Then
\(
\nabla\widetilde H_i(y_i)=o(\lambda_i)\),
 \(
\operatorname{Hess}_{g_0}H(p)=0.
\)
\end{prop}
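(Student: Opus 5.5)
We will apply the Pohozaev identity of Ammann--Humbert--Ould Ahmedou \cite{BE} to $\varphi_i$ on a small ball $B_\sigma(y_i)$, using the vector fields $\partial_k$ (translation) and the conformal Killing fields $X_k(y)=2y_k (y-y_i)-|y-y_i|^2e_k$ centred at $y_i$, together with the dilation field. By Theorem~\ref{thm:intro-simple}, $p$ is isolated simple, so Lemma~\ref{lem:local-green} and Lemma~\ref{lem:green-conv-local} are available.

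\emph{Translation identity.} For a solution of $D\varphi=\widetilde H|\varphi|^{2^*-2}\varphi$, the translation Pohozaev identity reads
\[
\frac{1}{2^*}\int_{B_\sigma(y_i)}\partial_k\widetilde H_i\,|\varphi_i|^{2^*}\,dy=\mathcal B_k(\varphi_i,\sigma),
\]
where $\mathcal B_k$ is a boundary term quadratic in $(\varphi_i,\nabla\varphi_i)$ on $\partial B_\sigma(y_i)$, plus the term $\widetilde H_i|\varphi_i|^{2^*}\nu_k/2^*$. By Lemma~\ref{lem:local-green}, the quadratic part is $O(|\varphi_i(y_i)|^{-2}\sigma^{1-n})=O(\lambda_i^{n-1})$, and the nonlinear boundary part is even smaller. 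Expanding $\partial_k\widetilde H_i(y)=\partial_k\widetilde H_i(y_i)+\partial_{kl}\widetilde H_i(y_i)(y-y_i)_l+o(|y-y_i|)$ (using the $C^2$ convergence), rescaling $y=y_i+\lambda_i z$, and using $L^{2^*}$ convergence of $\Phi_i$ together with the tail bound \eqref{eq:kelvin-tail}, we get
\[
\partial_k\widetilde H_i(y_i)\int|\Phi|^{2^*}+\lambda_i\,\partial_{kl}\widetilde H_i(y_i)\int z_l|\Phi_i|^{2^*}\,dz+o(\lambda_i)=O(\lambda_i^{n-1}).
\]
The moment $\int z_l|\Phi_i|^{2^*}$ is bounded by \eqref{eq:kelvin-tail}, since $(1+|z|)^{-2n}|z|$ is integrable, and it tends to $0$ by the radial symmetry of $|\Phi|$. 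Hence $\nabla\widetilde H_i(y_i)=o(\lambda_i)$ because $n\ge3$, which is the first claim.

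\emph{Second order.} Next use the dilation field $(y-y_i)$, or equivalently the conformal fields $X_k$. The interior term is then $\int (y-y_i)\cdot\nabla\widetilde H_i\,|\varphi_i|^{2^*}$. Expanding as before, the first-order part is $o(\lambda_i)\cdot O(\lambda_i)$ and the quadratic part is $\lambda_i^2\,\partial_{kl}\widetilde H_i(y_i)\int z_kz_l|\Phi_i|^{2^*}dz$. For $n\ge3$, $|z|^2(1+|z|)^{-2n}$ is integrable, so this converges to $\lambda_i^2\frac{1}{n}\Delta\widetilde H(0)\int|z|^2|\Phi|^{2^*}$. The boundary terms, computed via Lemma~\ref{lem:green-conv-local}, are $|\varphi_i(y_i)|^{-2}\,(\mathcal B(\Gamma\eta,\sigma)+o(1))=\lambda_i^{n-1}(\ldots)$. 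For the exact Green spinor $\Gamma\eta$ the dilation boundary term is independent of $\sigma$; the harmonic remainder vanishes, so the boundary term has a definite value, which should be zero for the pure Green function. For $n\ge4$, $\lambda_i^{n-1}=o(\lambda_i^2)$, so these terms drop out and $\Delta\widetilde H(0)=0$. For $n=3$ both sides are order $\lambda_i^2$, and the balance must come from the vanishing of the Green boundary term; this needs a direct computation.

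To obtain the full Hessian rather than only the trace, we use the anisotropic identities with the fields $X_k$ or with $y_k\partial_l+y_l\partial_k$. The latter are not conformal Killing fields, so we would instead apply translation identities on the sphere side at the shifted centres, i.e.\ in the chart centred at $\pi_p^{-1}(y_i)$. The conformal fields on $\mathbb S^n$ give $\int \langle\nabla H_i,\nabla x_k\rangle|\psi_i|^{2^*}$, which at second order involves $\operatorname{Hess}H(p)$ contracted with moments; combining with $\Delta$ gives the traceless part. \textbf{Main obstacle:} extracting the full Hessian (not just $\Delta H$), since isotropy of $|\Phi|$ kills off-diagonal moments. I would try to use the nonradial spinor structure of $\Phi$, for instance moments weighted by $\langle z\cdot\Phi,\Phi\rangle$-type terms in the boundary or interior, and the $n=3$ boundary computation.
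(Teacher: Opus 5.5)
Your translation step is the paper's argument and correctly gives $\nabla\widetilde H_i(y_i)=o(\lambda_i)$. The dilation identity is also the right tool. For $n\ge4$, your bound $O(\lambda_i^{n-1})=o(\lambda_i^2)$ on its boundary terms already gives $\Delta\widetilde H(0)=0$.

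The genuine gap is the last assertion, $\operatorname{Hess}_{g_0}H(p)=0$, which you call the main obstacle and leave open. The missing idea is elementary, and it is how the paper concludes. By Theorem~\ref{thm:intro-threshold}(i), $H(p)=H_{\max}$, so $p$ is a maximum point of $H$, which is $C^2$ near $p$. Hence $\operatorname{Hess}_{g_0}H(p)$ is negative semidefinite. Its trace $\Delta_{g_0}H(p)=\frac14\Delta\widetilde H(0)$ vanishes, so all its eigenvalues are zero.

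The alternatives you sketch are unnecessary, and they would not capture the traceless part anyway. For conformal fields the interior term is $X(\widetilde H_i)|\varphi_i|^{2^*}$. After rescaling it only sees the radial density $|\Phi|^{2^*}$, and $\int z_jz_k|\Phi|^{2^*}\,dz=c_\Phi\delta_{jk}$, so only the trace can appear. In the chart, the conformal fields of $\mathbb S^n$ are translations, rotations, the dilation and the special conformal fields, so moving the centre produces nothing new. The fields $X_k$ are not equivalent to the dilation: their $D^2\widetilde H_i(y_i)$ contribution is odd in $z$ and drops out at leading order. Non-conformal fields such as $y_k\partial_l+y_l\partial_k$ create interior terms quadratic in $(\varphi_i,\nabla\varphi_i)$. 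These are of order one after rescaling and would have to be expanded to order $\lambda_i^2$.

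The second, smaller gap is the dilation boundary term. You only conjecture that it vanishes and, for $n=3$, leave it as a computation to be done. The check is short and works for every $n\ge3$. For $X=y-y_i$ one has $\nabla_X=r\nabla_\nu$ on $\partial B_r(y_i)$. The $C^1_{\mathrm{loc}}$ Green limit of $\lambda_i^{-\frac{n-1}{2}}\varphi_i(y_i+z)$ is $G(z)=-\bigl(\frac{n}{H(p)}\bigr)^{n-1}\frac{z}{|z|^n}\cdot\Phi(0)$. It is homogeneous of degree $1-n$, so $\partial_\nu G=\frac{1-n}{r}G$ and
\[
\lambda_i^{1-n}\int_{\partial B_r(y_i)}(\nu\cdot\nabla_X\varphi_i,\varphi_i)\,d\sigma\longrightarrow(1-n)\int_{\partial B_r}\operatorname{Re}\langle\nu\cdot G,G\rangle\,d\sigma=0,
\]
because Clifford multiplication by $\nu$ is skew-adjoint, so $\langle\nu\cdot G,G\rangle$ is purely imaginary. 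The nonlinear boundary term is $O(\lambda_i^n)$. Hence the right-hand side of the dilation identity is $o(\lambda_i^{n-1})+O(\lambda_i^n)=o(\lambda_i^2)$ for all $n\ge3$. This is what the paper uses to obtain $\Delta\widetilde H(0)=0$ without separating $n=3$ from $n\ge4$.
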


\begin{proof}
  Choose $s>0$ sufficiently small so that
$\pi_p^{-1}(\overline{B_{2s}})\subset U$
and the isolation estimate holds on $B_{2s}$.
By Theorem~\ref{thm:intro-simple}, $p$ is an isolated simple blow-up point.
Applying Lemma~\ref{lem:local-green} to
$s^{(n-1)/2}\varphi_i(s\,\cdot)$ and scaling back, we obtain
the sharp decay estimate for $0<|y-y_i|\le s$
.

Fix $0<r<s/4$. For all sufficiently large $i$,
Lemma~\ref{lem:local-green} gives
\begin{equation}\label{sharp decay estimate}
|\varphi_i(y)|+|y-y_i|\,|\nabla\varphi_i(y)|
\le
C\lambda_i^{\frac{n-1}{2}}|y-y_i|^{1-n},
\qquad 0<|y-y_i|\le 2r.
\end{equation}
Together with the local uniform bounds for $\Phi_i$,
this implies
\[
|\Phi_i(z)|\le C(1+|z|)^{1-n},
\qquad |z|<\frac{2r}{\lambda_i}.
\]
Since $n\ge3$,
\(
\int_{\mathbb R^n}
\frac{1+|z|^2}{(1+|z|)^{2n}}\,dz<\infty.
\)
The local convergence of $\Phi_i$ and the radiality
of $|\Phi|$ therefore give
\begin{equation}\label{eq:pohozaev-moments}
\begin{aligned}
\int_{B_{r/\lambda_i}}|\Phi_i|^{2^*}\,dz
&\longrightarrow
\int_{\mathbb R^n}|\Phi|^{2^*}\,dz>0,\\
\int_{B_{r/\lambda_i}}z|\Phi_i|^{2^*}\,dz
&\longrightarrow0,\\
\int_{B_{r/\lambda_i}}z_jz_k|\Phi_i|^{2^*}\,dz
&\longrightarrow c_\Phi\delta_{jk},
\end{aligned}
\end{equation}
where
\(
c_\Phi
:=
\frac1n\int_{\mathbb R^n}|z|^2|\Phi|^{2^*}\,dz>0.
\) The Green convergence in
Lemma~\ref{lem:green-conv-local} implies that
\begin{equation}\label{eq:pure-green-for-pohozaev}
\lambda_i^{-\frac{n-1}{2}}\varphi_i(y_i+z)
\longrightarrow -\left(\frac{n}{H(p)}\right)^{n-1}
\frac{z}{|z|^n}\cdot\Phi(0)
\quad\text{in }C^1_{\mathrm{loc}}(B_{2r}\setminus\{0\}).
\end{equation}
By the Pohozaev identity \cite{BE},
applied to $B_r(y_i)$,
we have
\begin{equation}\label{eq:dirac-local-pohozaev}
\begin{aligned}
\frac{n-1}{n}
\int_{B_r(y_i)}
X(\widetilde H_i)|\varphi_i|^{2^*}\,dy
={}&
\int_{\partial B_r(y_i)}
(\nu\cdot\nabla_X\varphi_i,\varphi_i)
\,d\sigma\\
&-\frac1n
\int_{\partial B_r(y_i)}
(X\cdot\nu)\widetilde H_i|\varphi_i|^{2^*}\,d\sigma,
\end{aligned}
\end{equation}
where $X=e_j$ $(j=1,\ldots,n)$ or $X(y)=y-y_i$,
and $\nu$ is the outward unit normal to
$\partial B_r(y_i)$.

Taking $X=e_j$ and using \eqref{sharp decay estimate} on
$\partial B_r(y_i)$ yields
\begin{equation}\label{eq:translation-pohozaev-bound}
\int_{B_r(y_i)}
\nabla\widetilde H_i(y)|\varphi_i(y)|^{2^*}\,dy
=
O(\lambda_i^{n-1}).
\end{equation}After the change of variables $y=y_i+\lambda_i z$,
Taylor expansion gives
\[
\nabla\widetilde H_i(y_i)
\int_{B_{r/\lambda_i}}|\Phi_i|^{2^*}\,dz+
\lambda_i D^2\widetilde H_i(y_i)
\int_{B_{r/\lambda_i}}z|\Phi_i|^{2^*}\,dz
+o(\lambda_i)
=
O(\lambda_i^{n-1}).
\]
By the $C^2$ convergence,
\eqref{eq:pohozaev-moments}, and $n\ge3$, we obtain
\begin{equation}\label{eq:gradient-small-scale}
\nabla\widetilde H_i(y_i)=o(\lambda_i).
\end{equation}

We now take $X=y-y_i$ in
\eqref{eq:dirac-local-pohozaev}.
On $\partial B_r(y_i)$, $\nabla_X=r\nabla_\nu$.
Moreover,
\[
\partial_\nu(\frac{z}{|z|^n}\cdot\Phi(0))=\frac{1-n}{r}(\frac{z}{|z|^n}\cdot\Phi(0))
\quad\text{on }\partial B_r.
\]
Thus, by \eqref{eq:pure-green-for-pohozaev},
\[
\begin{aligned}
&\lambda_i^{1-n}
\int_{\partial B_r(y_i)}
(\nu\cdot\nabla_X\varphi_i,\varphi_i)
\,d\sigma\longrightarrow
r\left(\frac{n}{H(p)}\right)^{2(n-1)}\int_{\partial B_r}
(\nu\cdot\partial_\nu(\frac{z}{|z|^n}\cdot\Phi(0)),\frac{z}{|z|^n}\cdot\Phi(0))\,d\sigma
=0.
\end{aligned}
\]
The nonlinear boundary term satisfies
\[
\int_{\partial B_r(y_i)}
\widetilde H_i|\varphi_i|^{2^*}\,d\sigma
=
O(\lambda_i^n).
\]
Consequently,
\begin{equation}\label{eq:dilation-pohozaev-small}
\begin{aligned}
&\int_{B_r(y_i)}
(y-y_i)\cdot\nabla\widetilde H_i(y)
|\varphi_i(y)|^{2^*}\,dy=
o(\lambda_i^{n-1})+O(\lambda_i^n)
=o(\lambda_i^2).
\end{aligned}
\end{equation}

On the other hand, Taylor expansion gives
\[
\begin{aligned}
\lambda_i^{-2}
\int_{B_r(y_i)}
(y-y_i)\cdot\nabla\widetilde H_i(y)
|\varphi_i(y)|^{2^*}\,dy&=
\frac{\nabla\widetilde H_i(y_i)}{\lambda_i}\cdot
\int_{B_{r/\lambda_i}}z|\Phi_i|^{2^*}\,dz\\
&+
\int_{B_{r/\lambda_i}}
z^T D^2\widetilde H_i(y_i)z
|\Phi_i|^{2^*}\,dz
+o(1)\longrightarrow
c_\Phi\Delta\widetilde H(0).
\end{aligned}
\]
Combining this with \eqref{eq:gradient-small-scale} and
\eqref{eq:dilation-pohozaev-small}, we obtain
\(
\Delta\widetilde H(0)=0.
\)
In the stereographic coordinates centered at $p$,
\(
\Delta\widetilde H(0)=4\Delta_{g_0}H(p),
\)
and hence $\Delta_{g_0}H(p)=0$.

Finally, Theorem~\ref{thm:intro-threshold} gives
$H(p)=H_{\max}$.
Thus $\operatorname{Hess}_{g_0}H(p)$ is negative
semidefinite and has zero trace, so
\[
\operatorname{Hess}_{g_0}H(p)=0.
\]
\end{proof}
\begin{proof}[Proof of Theorem~\ref{thm:intro-compactness}]
Suppose that no $\varepsilon,\delta,C>0$ give the
asserted bound. Taking
$\varepsilon=\delta=1/i$ and $C=i$, we obtain positive
functions $H_i\in C^2(\mathbb S^n)$ and weak solutions
$\psi_i$ such that
\[
\|H_i-H\|_{C^2(\mathbb S^n)}<\frac1i,
\qquad
\|\psi_i\|_{L^\infty(\mathbb S^n)}>i,
\]
and
\[
\int_{\mathbb S^n}|\psi_i|^{2^*}\,dv_{g_0}
\le
\left(\max_{\mathbb S^n}H_i\right)^{-n}
\left(\frac n2\right)^n\omega_n
+\frac1i.
\]
Since $H_i\to H$ uniformly,
\[
\limsup_{i\to\infty}
\int_{\mathbb S^n}|\psi_i|^{2^*}\,dv_{g_0}
\le
H_{\max}^{-n}
\left(\frac n2\right)^n\omega_n.
\]
Theorem~\ref{thm:intro-threshold} therefore gives,
after passing to a subsequence, a unique blow-up
point $p$ satisfying
\(
H(p)=H_{\max}.
\)
Since $H_i\to H$ in $C^2(\mathbb S^n)$,
Proposition~\ref{prop:second-order-obstruction} yields
\(
\Delta_{g_0}H(p)=0,
\)
contrary to the assumption.
This proves the uniform $L^\infty$ bound.
\end{proof}


\begin{thebibliography}{99}\small

\bibitem{A2003}
B.~Ammann,
\emph{A variational problem in conformal spin geometry},
Habilitationsschrift, Universit{\"a}t Hamburg, 2003.

\bibitem{Ammann2009}
B.~Ammann,
The smallest Dirac eigenvalue in a spin-conformal class and
cmc-immersions,
\emph{Commun. Anal. Geom.} \textbf{17} (2009), 429--479.
\href{https://doi.org/10.4310/CAG.2009.v17.n3.a2}
{doi:10.4310/CAG.2009.v17.n3.a2}.

\bibitem{BE2006}
B.~Ammann, E.~Humbert and B.~Morel,
Mass endomorphism and spinorial Yamabe type problems on
conformally flat manifolds,
\emph{Commun. Anal. Geom.} \textbf{14} (2006), 163--182.

\bibitem{BE}
B.~Ammann, E.~Humbert and M.~Ould Ahmedou,
An obstruction for the mean curvature of a conformal immersion
$S^n\to\mathbb R^{n+1}$,
\emph{Proc. Amer. Math. Soc.} \textbf{135} (2007),
no.~2, 489--493.

\bibitem{AmmannJammes}
B.~Ammann and P.~Jammes,
The supremum of first eigenvalues of conformally covariant
operators in a conformal class,
in \emph{Variational Problems in Differential Geometry},
London Mathematical Society Lecture Note Series, vol.~394,
Cambridge University Press, 2011, pp.~1--23.
\href{https://doi.org/10.1017/CBO9780511863219.002}
{doi:10.1017/CBO9780511863219.002}.

\bibitem{Baer1996}
C.~B{\"a}r,
The Dirac operator on space forms of positive curvature,
\emph{J. Math. Soc. Japan} \textbf{48} (1996),
no.~1, 69--83.

\bibitem{BF2020}
W.~Borrelli and R.~L.~Frank,
Sharp decay estimates for critical Dirac equations,
\emph{Trans. Amer. Math. Soc.} \textbf{373} (2020), 2045--2070.
\href{https://doi.org/10.1090/tran/7958}
{doi:10.1090/tran/7958}.

\bibitem{BMW2021}
W.~Borrelli, A.~Malchiodi and R.~Wu,
Ground state Dirac bubbles and Killing spinors,
\emph{Commun. Math. Phys.} \textbf{383} (2021), 1151--1180.
\href{https://doi.org/10.1007/s00220-021-04013-1}
{doi:10.1007/s00220-021-04013-1}.

\bibitem{BHMMM2015}
J.-P.~Bourguignon, O.~Hijazi, J.-L.~Milhorat et al.,
\emph{A spinorial approach to Riemannian and conformal geometry},
EMS Monographs in Mathematics,
European Mathematical Society, 2015.

\bibitem{CGS1989}
L.~A.~Caffarelli, B.~Gidas and J.~Spruck,
Asymptotic symmetry and local behavior of semilinear
elliptic equations with critical Sobolev growth,
\emph{Comm. Pure Appl. Math.} \textbf{42} (1989),
no.~3, 271--297.
doi:10.1002/cpa.3160420304

\bibitem{CCZ2026}
P.~Chen, Y.~Chen and M.~Zhu,
Energy quantization for Dirac systems over non-collapsed
degenerating Einstein manifolds,
arXiv:2606.27686, 2026.
\url{https://arxiv.org/abs/2606.27686}.

\bibitem{Friedrich1998}
T.~Friedrich,
On the spinor representation of surfaces in Euclidean $3$-space,
\emph{J. Geom. Phys.} \textbf{28} (1998),
no.~1--2, 143--157.
doi:10.1016/S0393-0440(98)00018-7.

\bibitem{F2000}
T.~Friedrich,
\emph{Dirac operators in Riemannian geometry},
Graduate Studies in Mathematics, vol.~25,
American Mathematical Society, 2000.


\bibitem{Isobe2011}
T.~Isobe,
Nonlinear Dirac equations with critical nonlinearities
on compact Spin manifolds,
\emph{J. Funct. Anal.} \textbf{260} (2011),
no.~1, 253--307.
\href{https://doi.org/10.1016/j.jfa.2010.09.008}
{doi:10.1016/j.jfa.2010.09.008}.

\bibitem{Isobe2013}
T.~Isobe,
A perturbation method for spinorial Yamabe type equations
on $S^m$ and its application,
\emph{Math. Ann.} \textbf{355} (2013),
no.~4, 1255--1299.

\bibitem{ISX2024}
T.~Isobe, Y.~Sire and T.~Xu,
Non-compactness results for the spinorial Yamabe-type problems
with non-smooth geometric data,
\emph{J. Funct. Anal.} \textbf{287} (2024), 110472.
\href{https://doi.org/10.1016/j.jfa.2024.110472}
{doi:10.1016/j.jfa.2024.110472}.


\bibitem{KMS2009}
M.~A.~Khuri, F.~C.~Marques and R.~M.~Schoen,
A compactness theorem for the Yamabe problem,
\emph{J. Differential Geom.} \textbf{81} (2009),
no.~1, 143--196.
doi:10.4310/jdg/1228400630.

\bibitem{Kenmotsu1979}
K.~Kenmotsu,
Weierstrass formula for surfaces of prescribed mean curvature,
\emph{Math. Ann.} \textbf{245} (1979),
no.~2, 89--99.
doi:10.1007/BF01428799.

\bibitem{KusnerSchmitt1996}
R.~Kusner and N.~Schmitt,
The spinor representation of surfaces in space,
arXiv:dg-ga/9610005 (1996).

\bibitem{LM1989}
H.~B.~Lawson and M.-L.~Michelsohn,
\emph{Spin Geometry},
Princeton University Press, Princeton, 1989.

\bibitem{Li1995JDE}
Y.~Y.~Li,
Prescribing scalar curvature on $S^n$ and related problems,
Part I,
\emph{J. Differential Equations} \textbf{120} (1995), 319--410.
\href{https://doi.org/10.1006/jdeq.1995.1115}
{doi:10.1006/jdeq.1995.1115}.

\bibitem{LiZhu1999}
Y.~Y.~Li and M.~Zhu,
Yamabe type equations on three dimensional Riemannian manifolds,
\emph{Commun. Contemp. Math.} \textbf{1} (1999),
no.~1, 1--50.
doi:10.1142/S021919979900002X.

\bibitem{Matsutani1999}
S.~Matsutani,
Immersion anomaly of Dirac operator on surface in $\mathbb R^3$,
\emph{Rev. Math. Phys.} \textbf{11} (1999),
no.~2, 171--186.
doi:10.1142/S0129055X99000076.

\bibitem{Schoen1989}
R.~M.~Schoen,
Variational theory for the total scalar curvature functional
for Riemannian metrics and related topics,
in \emph{Topics in Calculus of Variations},
Lecture Notes in Mathematics, vol.~1365,
Springer, Berlin, 1989, pp.~120--154.
doi:10.1007/BFb0089180.

\bibitem{Taimanov1998}
I.~A.~Taimanov,
The Weierstrass representation of closed surfaces
in $\mathbb R^3$,
\emph{Funct. Anal. Appl.} \textbf{32} (1998),
no.~4, 258--267.
doi:10.1007/BF02463208.

\bibitem{Taimanov1999}
I.~A.~Taimanov,
The Weierstrass representation of spheres in $\mathbb R^3$,
the Willmore numbers, and soliton spheres,
\emph{Proc. Steklov Inst. Math.} \textbf{225} (1999),
322--343.

\bibitem{WangZhang2025}
G.~Wang and M.~Zhang,
Stability of spinorial Sobolev inequalities on $\mathbb S^n$,
\emph{Adv. Math.} \textbf{505} (2026), 111318.

\bibitem{Xu2024}
T.~Xu,
Conformal embeddings of $S^2\to\mathbb R^3$ with prescribed
mean curvature: a variational approach,
\emph{Commun. Anal. Geom.} \textbf{32} (2024), 203--254.
\href{https://doi.org/10.4310/cag.240905042216}
{doi:10.4310/cag.240905042216}.

\end{thebibliography}
\end{document}